\DeclareMathOperator{\Tr}{Tr}
\newcommand\equ[1]{{\rm (\ref{#1})}}
\newcommand{\R}{\mathbb{R}}
\newcommand{\Z}{\mathbb{Z}}
\newcommand{\N}{\mathbb{N}}
\newcommand{\C}{\mathbb{C}}
\newcommand{\eps}{\varepsilon}
\newcommand{\ecc}{e}
\newtheorem{theorem}{Theorem}%[section]
\newtheorem{meta-thm}[theorem]{Meta-Theorem}
\newtheorem{lemma}[theorem]{Lemma}
\newtheorem{proposition}[theorem]{Proposition}
\theoremstyle{remark}
\newtheorem{remark}[theorem]{Remark}
\theoremstyle{definition}
\newtheorem{definition}[theorem]{Definition}
\newcommand{\parentesis}[1]{\left (  #1 \right ) }
\newcommand{\matdos}[4]{
    \begin{pmatrix}
        #1 & #2 \\
        #3 & #4
    \end{pmatrix}
}
\newtheorem{assumption}{Assumption}
\crefname{assumption}{Assumption}{Assumptions}
\renewcommand{\vec}[1]{\mathbf{#1}}
\newcommand{\comentario}[1]{\color[rgb]{1,0,0} #1 \color[rgb]{0,0,0}}
\renewcommand{\d}{\operatorname{d}\!}
\begin{document}

\title[The spin-spin problem in Celestial Mechanics]
{The spin-spin problem in Celestial Mechanics}

\author[A. Celletti]{Alessandra Celletti}

\address{Department of Mathematics, University of Rome Tor Vergata,
  Via della Ricerca Scientifica 1, 00133 Rome (Italy)}

\email{celletti@mat.uniroma2.it}

\author[J. Gimeno]{Joan Gimeno}

\address{Department of Mathematics, University of Rome Tor Vergata,
  Via della Ricerca Scientifica 1, 00133 Rome (Italy)}

\email{gimeno@mat.uniroma2.it}

\author[M. Misquero]{Mauricio Misquero}

\address{Department of Mathematics, University of Rome Tor Vergata,
  Via della Ricerca Scientifica 1, 00133 Rome (Italy)}

\email{misquero@mat.uniroma2.it}

\thanks{%
% A.C. acknowledges GNFM-INdAM and the MIUR Excellence
%   Department Project awarded to the Department of Mathematics,
%   University of Rome Tor Vergata, CUP E83C18000100006, 
  A.C. and M.M. thank MSCA-ITN-ETN Stardust-R, Grant Agreement 813644
  and J.G. thanks MIUR-PRIN 20178CJA2B ``New Frontiers of Celestial
  Mechanics: theory and Applications''. J.G. has also been supported
  by the Spanish grants PGC2018-100699-B-I00 (MCIU/AEI/FEDER, UE) and
  the Catalan grant 2017 SGR 1374.}

\begin{abstract}
We study the dynamics of two homogeneous rigid ellipsoids subject to
their mutual gravitational influence. We assume that the spin axis of
each ellipsoid coincides with its shortest physical axis and is
perpendicular to the orbital plane. Due to such assumptions, the
problem is planar and depends on particular parameters of the
ellipsoids, most notably, the equatorial oblateness and the flattening
with respect to the shortest physical axes. We consider two models for
such configuration: while in the full model, there is a coupling
between the orbital and rotational motions, in the Keplerian model,
the centers of mass of the bodies are constrained to move on coplanar
Keplerian ellipses. The Keplerian case, in the approximation that
includes the coupling between the spins of the two ellipsoids, is what
we call spin-spin problem, that is a generalization of the classical
spin-orbit problem. In this paper we continue the investigations of
\cite{mis2021} on the spin-spin problem by comparing it with the
spin-orbit problem and also with the full model.

Beside detailing the models associated to the spin-orbit and spin-spin
problems, we introduce the notions of standard and balanced
resonances, which lead us to investigate the existence of periodic and
quasi-periodic solutions.  We also give a qualitative description of
the phase space and provide results on the linear stability of
solutions for the spin-orbit and spin-spin problems.  We conclude by
providing a comparison between the full and the Keplerian models with
particular reference to the interaction between the rotational and
orbital motions.
\end{abstract}

\keywords{Spin-spin model $|$ Spin-orbit model $|$ Two-body problem
  $|$ Resonances $|$ Periodic orbits $|$ Quasi-periodic solutions}

\maketitle

% \clearpage
% \tableofcontents
% \clearpage

\section{Introduction}\label{sec:introduction}

The dynamics of two rigid bodies orbiting under their mutual
gravitational attraction is a classical problem of Celestial Mechanics
known as the {\sl Full Two-Body problem}. In this context, Kinoshita
investigated the problem by using Hori-Deprit perturbation theory
\cite{Kinoshita}, assuming that one of the bodies is spherical and the
other body is triaxial. Later, the problem of two extended rigid
bodies was studied in \cite{mac1995} as a Hamiltonian system with
respect to a non-canonical structure, which is used to characterize
the relative equilibria. A seminal work was performed in
\cite{bou2017} to which we refer for an alternative description of the
model of the full two rigid body problem using spherical harmonics and
Wigner D-matrices.  In \cite{sch2009}, the problem is restricted to a
planar configuration with the potential expanded to order $1/r^3$,
where $r$ is the relative distance between the two rigid bodies; under
this condition, \cite{sch2009} describes the relative equilibria and
their stability properties.

In this paper, we investigate different simplified models of
rotational dynamics of celestial bodies, subject to the mutual
gravitational attraction.  The spin-spin problem was introduced in
\cite{mis2021} as a planar version of the Full Two-Body problem for
ellipsoids (compare with \cite{batmor2015,boulas2009}), by using the
expansion of the potential up to order $1/r^5$, which results in the
coupling of the spins of both bodies.  An equivalent model was studied
in \cite{jaf2016} (see also \cite{hou2017}).

Indeed, we consider a hierarchy of models with different
complexity. In particular, we start by considering two homogeneous
rigid ellipsoidal bodies subject to the following assumptions:

\begin{assumption}\label{a:planar_motion}
 The spin axis of each ellipsoid is perpendicular to the orbital
 plane.
\end{assumption}

\begin{assumption}\label{a:planar_symmetry}
 The spin axis of each ellipsoid is aligned with the shortest physical
 axis of the satellite.
\end{assumption}

\cref{a:planar_motion,a:planar_symmetry} imply that the motion takes
place on a plane. Following \cite{mis2021}, we introduce a Hamiltonian
function that includes both the orbital and rotational motions.  Using
the conservation of the angular momentum, the system is described by a
Hamiltonian with 3 degrees of freedom that depends on several
parameters of each ellipsoid, among which there are the equatorial
oblateness and the flattening with respect to the shortest physical
axis. Such parameters are typically small for natural bodies of the
solar system. We refer to this model as the {\sl full} problem, since
it includes the coupling between the orbital and rotational motions.

The potential of the problem can be written as
$V=V_0+\sum_{l=1}^\infty V_{2l}$, where $V_0$ denotes the Keplerian
potential and the terms $V_{2l}$ are proportional to $1/r^{2l+1}$,
where $r$ is the instantaneous distance of the two centers of mass. If
we consider the expansion up to order $l=2$, say $V=V_0+V_2+V_4$, we
obtain that the model includes the coupling of the spins of the two
ellipsoids through the term $V_4$, which contains combinations of the
rotation angles of the two satellites. When the two spins interact, we
refer to the problem as the {\sl full spin-spin} model. If, instead,
we limit the potential to $V=V_0+V_2$, then we obtain two decoupled
systems, the {\sl full spin-orbit models} (see, e.g.,
\cite{bel1966,cel1990,gol1966}). When one of the bodies is spherical,
its spin is uniform and the dynamics of the spin-spin model becomes
very similar to that of the spin-orbit model, but including the terms
in $V_4$.

Next, we introduce another assumption, namely:

\begin{assumption}\label{a:keplerian}
 The orbital motion of the ellipsoids coincides with that of two point
 masses, so that both centers of mass move on coplanar Keplerian
 orbits with eccentricity $e\in[0,1)$ and with a common focus at the
   barycenter of the system.
\end{assumption}

\cref{a:keplerian} implies that the orbit is not affected by the
rotational motion. To the {\sl full} spin-spin and spin-orbit
problems, it corresponds the {\sl Keplerian} spin-spin and spin-orbit
models, described by a Hamiltonian function with an explicit periodic
time dependence.

Note that we are considering rigid bodies only, which means that
dissipative effects due to tidal torques are not considered. We refer
to \cite{cel2008,cel2009,misort2020,mis2021,gol1966} for a description
of the dissipative spin-orbit and spin-spin problems.

\vskip .1in

The previously described models have some symmetries that are a direct
consequence of the mirror symmetries of the ellipsoids. This fact
leads us to introduce the following two types of resonances within the
spin-orbit problem of a single ellipsoid:

\begin{enumerate}
\renewcommand*{\theenumi}{\textbf{R\arabic{enumi}}}
\renewcommand*{\labelenumi}{(\theenumi)}
  \item \label{R1} We call {\sl standard} $m:n$ spin-orbit resonance,
    for some integers $m$, $n$, when the spinning body makes $m$
    rotations during $n$ orbital revolutions;
  \item \label{R2} We call {\sl balanced} $m:2$ spin-orbit resonance,
    for some integers $m$, when the spinning body makes $m/2$ of a
    rotation during one orbital revolution.
\end{enumerate}

We remark that a balanced spin-orbit resonance is also a standard
resonance, but the converse is not always true. For example, a
balanced $2k:2$ resonance for $k\in\Z$ is equivalent to a $k:1$
spin-orbit resonance, but there is not such an equivalence for order
$(2k+1):2$.  Both definitions \eqref{R1} and \eqref{R2} extend to the
spin-spin problem of type $(m_1:n_1,m_2:n_2)$ for integers $m_1$,
$m_2$, $n_1$, $n_2$, when the first ellipsoid is in a $m_1:n_1$
spin-orbit resonance and the second ellipsoid in a $m_2:n_2$
spin-orbit resonance.

\begin{comment}
{\color{green} \textbf{I would erase:} We note that we could have
  taken a more general definition of spin-spin resonance as a
  resonance not necessarily associated to two spin-orbit resonances.
  However, we believe that our definition has more interest in
  astronomical applications, although we are aware of the fact that
  different notions of resonances might be introduced and that they
  can have an interest from the dynamical system point of view.}
\end{comment}

We also stress that spin-orbit resonances find many applications in
the solar system; in fact, the Moon is an example of a $1:1$
spin-orbit resonance\footnote{A $2:2$ balanced spin-orbit resonance is
equivalent to a $1:1$ standard spin-orbit resonance.}, since it makes
a rotation in the same period it takes to make an orbit around the
Earth. This is also called a {\sl synchronous} spin-orbit resonance,
which is common to many satellites of other planets, including Mars,
Jupiter, Saturn, Uranus, Neptune. Among the planets, Mercury is locked
in a $3:2$ spin-orbit resonance around the Sun. On the other hand, the
Pluto-Charon system is locked in the double synchronous spin-spin
resonance $(1:1,1:1)$.

\vskip .1in

In this work, we study the behavior of the solutions of the spin-orbit
and spin-spin problems as the parameters and the initial conditions
are varied. In particular, we investigate the boundary conditions that
lead to the existence of symmetric periodic orbits.  Such results (see
Propositions~\ref{prop:restypes} and \ref{prop:resSStypes}) use some
symmetry properties of the equations of motion. We remark that these
symmetries are lost if we include dissipation; however, such periodic
solutions might be continued to the dissipative setting as shown in
\cite{misort2020, mis2021}. Beside the study of the periodic orbits,
we provide the conditions for the existence of quasi-periodic
solutions of the Keplerian version of the spin-spin model.

We also give a qualitative study of the spin-orbit problem as well as
the spin-spin problem with spherical and non-spherical
companion. Within such investigation, we discover some new features,
like the measure synchronization (see, e.g., \cite{ham1999}) for the
spin-spin problem with identical bodies.  Our study leads to analyze
the multiplicity of solutions and the linear stability of the periodic
orbits (compare with \cite{celchi2000}). In general, we find that
there is not a unique solution associated to a particular resonance;
however, for some values of the parameters such a uniqueness
exists. Finally, we provide some results on the comparison between the
full and Keplerian models, as well as on the interaction between the
spin and the orbital motion, motivated by the fact that the coupling
between the rotational and orbital motions has not been much explored
in the literature.

\vskip .1in

This work is organized as follows. In Section~\ref{sec:models} we
present the spin-orbit and spin-spin models. The definition of
resonances and the existence of periodic and quasi-periodic orbits are
given in Section~\ref{sec:periodic}. A qualitative description of the
phase space is given in Section~\ref{sec:graph}. The linear stability
of symmetric periodic orbits is investigated in
Section~\ref{sec:linear}. Finally, a comparison between the full and
Keplerian models is presented in Section~\ref{sec:comparison}.

\section{The models for the spin-orbit and spin-spin coupling}
\label{sec:models}

The aim of this section is to present the so-called {\sl spin-orbit}
and {\sl spin-spin} models, that we are going to introduce as
follows. The assumptions and the notations are given in
Section~\ref{sec:assumptions}; different models, subject to some or
all the assumptions listed in Section~\ref{sec:assumptions}, are
presented in Sections~\ref{sec:full} and \ref{sec:Kepler}.

\subsection{General assumptions}\label{sec:assumptions}

Consider two homogeneous rigid ellipsoids, say $\mathcal E_1$ and
$\mathcal E_2$, with masses $M_1$ and $M_2$, respectively.  Let $
\mathcal A_j< \mathcal B_j< \mathcal C_j$, $j=1,2$, be their principal
moments of inertia with corresponding principal semi-axes $\mathsf
a_j>\mathsf b_j>\mathsf c_j$.  We refer to the {\sl Full Two-Body
  Problem} (hereafter F2BP) as the problem of two rigid bodies
interacting gravitationally (see, e.g., \cite{sch2002,sch2009}). When
the bodies have ellipsoidal shape, we speak of the {\sl ellipsoidal
  F2BP}, where we make the assumptions 1, 2, 3 of
Section~\ref{sec:introduction}:

\begin{figure}[ht]
    \centering
    \scalebox{0.5}{\includegraphics{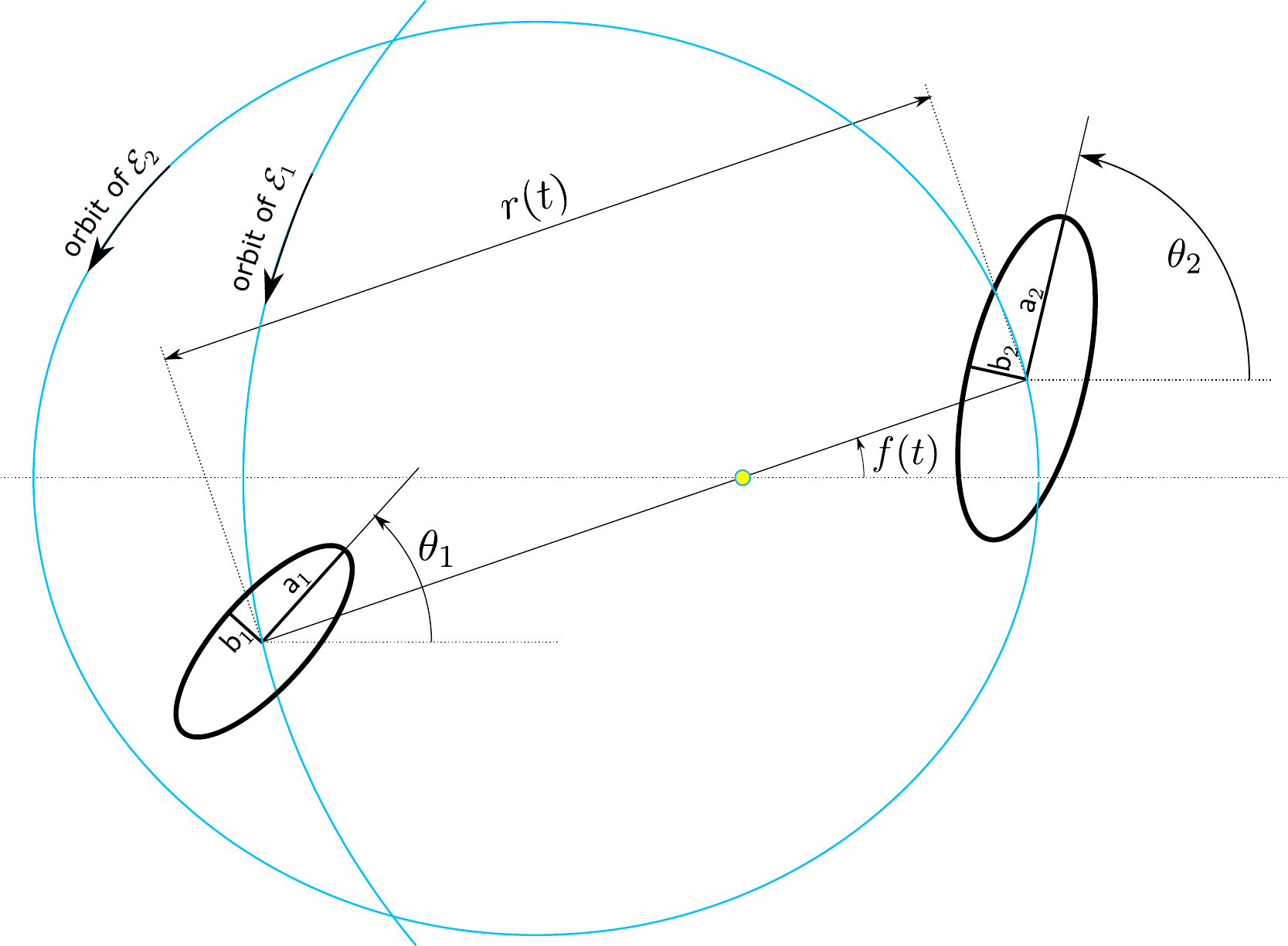}}
    \caption{The planar spin-spin problem considering
      \Cref{a:keplerian,a:planar_motion,a:planar_symmetry}.}
    \label{fig:spin-spin}
\end{figure}

\Cref{a:planar_motion,a:planar_symmetry} guarantee that the problem we
deal with is a planar problem. Additionally, \Cref{a:keplerian}
restricts the problem so that we obtain a model with two degrees of
freedom and a periodic time dependence. This assumption is equivalent
to say that the spin motion will not influence the orbital
motion. Besides, note that we are neglecting the gravitational
contribution of other bodies, we are not considering any dissipative
effect that might arise, for example, from the non-rigidity of the
ellipsoidal bodies and we do not take into account the obliquity,
namely the inclination of the spin-axes with respect to the orbital
plane.

We are going to work with units adapted to the system. If we call
$\tau$ the orbital period of the Keplerian orbit, then we will use
units such that
\[
  M_1+M_2=1,\quad \mathcal C_1+\mathcal C_2=1,\quad \tau=2\pi\ .
\]
Recall Kepler's third law for the Two-Body Problem
\[
  G(M_1+M_2) \parentesis{\frac{\tau}{2\pi}}^2=a^3 \ ,
\]
where $G$ is the gravitational constant, $a$ is the semi-major axis
associated to the motion of the reduced mass of the system, say
$\mu=M_1M_2$ in our units. In consequence, $G=a^3$ in our units.

Let us now define the parameters for each ellipsoid
\[
  d_j=\mathcal  B_j-  \mathcal A_j, \qquad
  q_j=2 \mathcal C_j-  \mathcal B_j-  \mathcal A_j \ ;
\]
the quantity $d_j/ \mathcal C_j$ measures the equatorial oblateness of
each ellipsoid with respect to the plane formed by the directions of
$\mathsf a_j$ and $\mathsf b_j$, whereas $q_j/\mathcal C_j$ measures
the flattening with respect to the direction corresponding to the
$\mathsf c_j$-axis.

Note that if $ \mathcal A_j \le \mathcal B_j\le \mathcal C_j$, then,
in our units there are some bounds for the parameters of the system
given by
\begin{equation}\label{a_j}
    0\le d_j \le \mathcal C_j\le 1,\quad d_j\le q_j\le 2\mathcal
    C_j\le 2,\quad M_j \mathsf a_j^2=\frac{5}{2}(\mathcal C_j+d_j)\le
    5\mathcal C_j \le 5 \ .
\end{equation}
The last relation in \equ{a_j} comes from the fact that the moments of
inertia of an ellipsoid hold the identities
\[
{\mathcal A_j}=\frac{1}{5}M_j({\mathsf b_j^2}+{\mathsf c_j^2}) ,\qquad
{\mathcal B_j}=\frac{1}{5}M_j({\mathsf a_j^2}+{\mathsf c_j^2}) ,\qquad
{\mathcal C_j}=\frac{1}{5}M_j({\mathsf a_j^2}+{\mathsf b_j^2})\ .
\]

\subsection{The full models}\label{sec:full}

First, let us derive the equations of the full models of spin-orbit
and spin-spin coupling, for which only
\Cref{a:planar_motion,a:planar_symmetry} hold, namely we do not
constrain the centers of mass of $\mathcal E_1$ and $\mathcal E_2$ to
move on Keplerian ellipses.

The equations of motion are obtained by computing the Hamiltonian
function through a Legendre transformation of the Lagrangian, say
$L=T-V$, where $T$ is the kinetic energy and $V$ the potential energy
of the system. We split $T$ in two parts, associated respectively to
the orbital and rotational motions, say $T=T_{orb}+T_{rot}$.

Let us identify the orbital plane with the complex plane $\C$,
consider the center of mass of the system fixed in the origin and let
the position of each ellipsoid be $\vec r_j\in \C$. Then, by
definition of the barycenter we have
\[
M_1 \vec r_1+M_2 \vec r_2=\vec 0\ .
\]
If we define the relative position vector $\vec r= \vec r_2-\vec r_1$,
since in our units $M_1 +M_2 =1$, then we have
\begin{equation}\label{r12}
\vec r_1=-M_2 \vec r\ ,\quad \vec r_2=M_1 \vec r\ .
\end{equation}

\begin{figure}[ht]
    \centering
    \scalebox{0.5}{\includegraphics{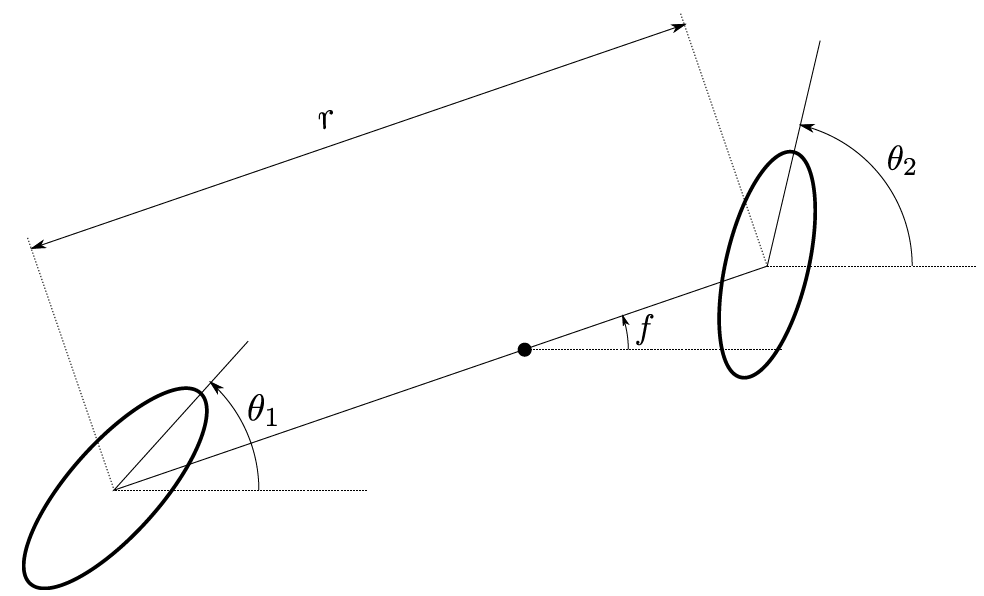}}
    \caption{Generalized coordinates of the full planar model.}
    \label{fig:variables}
\end{figure}

We introduce the Lagrangian generalized coordinates
$(r,f,\theta_1,\theta_2)$, illustrated in \Cref{fig:variables}, in the
following way. The distance $r>0$ and the angle $f$ define the
relative position vector $\vec r = r \exp (if) \in \C $ with respect
to an inertial reference frame with a fixed $x$-axis. The angles
$\theta_1$, $\theta_2$ provide the orientation of the axes $\mathsf
a_1$, $\mathsf a_2$ with respect to the $x$-axis. The variables $r$
and $f$ define the orbital motion of the system and $\theta_j$ the
spin motion of the ellipsoid $\mathcal E_j$.

To write the kinetic energy, we notice that we can express $\vec r$ in
components as ${\vec r}=(r\cos f,r\sin f)$, which gives $\dot {\vec
  r}=(\dot r\cos f-r\dot f\sin f, \dot r\sin f+r\dot f\cos f)$. Then,
using \equ{r12}, the total orbital and the rotational kinetic energies
are given by
\begin{equation*}
  T_{orb}=\frac{1}{2} (M_1 \dot{\vec r}_1 ^2 +M_2 {\dot{\vec r}_2}^2)
  = \frac{\mu}{2} \dot{\vec r}^2 = \frac{\mu}{2} (\dot r^2 + r^2 \dot
  f^2),\quad T_{rot}=\frac{1}{2} \mathcal C_1 \dot \theta_1^2 +
  \frac{1}{2} \mathcal C_2 \dot \theta_2^2\ ,
\end{equation*}
where $\mu=M_1M_2$ in our units. The Lagrangian is given by
\[
  L(r,f,\theta_1,\theta_2,\dot r,\dot f,\dot\theta_1,\dot\theta_2)
  =T_{orb}(r,\dot r,\dot f) + T_{rot}(\dot \theta_1,\dot
  \theta_2)-V(r,f,\theta_1,\theta_2)\ ,
\]
where, according to \cite{mis2021,bou2017}, the full expansion of the
potential energy of the system $V=V(r,f,\theta_1,\theta_2)$ takes the
form
\begin{equation}\label{V_full}
    V(r,f,\theta_1,\theta_2)=-\frac{G M_1 M_2 }{ r}
    \displaystyle\sum_{\substack{(l_1,m_1)\in \Upsilon \\ (l_2,m_2)\in
        \Upsilon}} \frac{\Lambda_{l_2,m_2}^{l_1,m_1}}{r^{2(l_1+l_2)}}
    \cos (2m_1 (\theta_1-f)+2m_2 (\theta_2-f)) \ ,
\end{equation}
where
\begin{equation*}
    \Upsilon= \{ (l,m)\in \mathbb Z^2 \colon \ 0\le |m| \le l \}\ ,
\end{equation*}
and the constants $\Lambda_{l_2,m_2}^{l_1,m_1}$ are defined in
\Cref{app:coef}; we refer to \cite{mis2021} for full details.

Let us define the momenta as
\begin{equation*}
    p_r=\partial_{\dot r} L = \mu \dot r,\quad
    p_f=\partial_{\dot f} L=\mu r^2\dot f,\quad
    p_j = \partial_{\dot \theta_j} L = \mathcal C_j \dot \theta_j\ ;
\end{equation*}
then, the Hamiltonian of the system becomes
\begin{equation}\label{H_theta}
    H(r,f,\theta_1,\theta_2,p_r,p_f,p_1,p_2)=\frac{p_r^2}{2\mu}
    + \frac{p_f^2}{2\mu r^2}
    + \frac{p_1^2}{2\mathcal C_1} + \frac{p_2^2}{2\mathcal C_2}
    +V(r,f,\theta_1,\theta_2) \ .
\end{equation}
Consequently, the equations of motion are
\begin{align}
\label{orbital_eqs}
    \dot r = \frac{p_r}{\mu}, \quad
    \dot f &= \frac{p_f}{\mu r^2},\quad
    \dot p_r =  \frac{p_f^2}{\mu r^3} - \partial_r V , \quad
    \dot p_f =  - \partial_{f} V
\intertext{and}
\label{spin-spin_V}
\dot \theta_j &=\frac{p_j}{\mathcal C_j},\quad
\dot p_j =  - \partial_{\theta_j} V \ .
\end{align}
We remark that with these variables, the problem splits in two parts:
equations \equ{orbital_eqs} describe the {\sl orbital} motion, while
equations \equ{spin-spin_V} describe the {\sl rotational} motion. The
evolution of both parts is coupled through the potential $V$.

The potential $V$, expanded in \cref{V_full}, can be written in a
perturbative way as
\begin{equation}\label{V0per}
  V(r,f,\theta_1,\theta_2) = V_0(r) + V_{per}(r,f,\theta_1,\theta_2),
  \quad V_0(r)=-\frac{GM_1M_2}{r}\ .
\end{equation}

We remark that the term $V_0$ corresponds to the classical Keplerian
form for the potential and the term $V_{per}$ provides the coupling
between the spin and the orbital motions. Moreover, we can expand
$V_{per}$ as $ V_{per}=\sum_{l=1}^\infty V_{2l}\ , $ where $V_{2l}$
are suitable terms proportional to $1/r^{2l+1}$. A truncation of the
expansion of $V_{per}$ will result in an approximated dynamics of our
system. The explicit expressions of the first two terms of such a
expansion are given in \cite{mis2021} and we report them here:
\begin{eqnarray}\label{V024}
    V_2&=&-\frac{GM_2}{4r^3} \parentesis{q_1+3d_1\cos(2\theta_1-2f)}
    -\frac{GM_1}{4r^3} \parentesis{q_2+3d_2\cos(2\theta_2-2f)}\ ,\nonumber\\
    V_4&=&-\frac{3G}{4^3 r^5} \Bigg \{
    12q_1q_2 + \frac{15}{7} [\frac{M_2}{M_1}d_1^2+2\frac{M_2}{M_1}q_1^2 + \frac{M_1}{M_2}d_2^2+2\frac{M_1}{M_2}q_2^2]\nonumber\\
    &&\hspace{1.5cm} +d_1 M_2\left \{ [20\frac{q_2}{M_2} + \frac{100}{7}\frac{q_1}{M_1}]\cos (2\theta_1-2f) +25 \frac{d_1}{M_1}\cos (4\theta_1-4f)
    \right \}\nonumber\\
    &&\hspace{1.5cm}  +d_2 M_1 \left \{[20\frac{q_1}{M_1} + \frac{100}{7}\frac{q_2}{M_2}]\cos (2\theta_2-2f) + 25 \frac{d_2}{M_2}\cos (4\theta_2-4f)
    \right \}\nonumber\\
    &&\hspace{1.5cm} +6d_1 d_2 \cos (2\theta_1-2\theta_2)+70d_1 d_2  \cos (2\theta_1+2\theta_2-4f) \Bigg \}\ .
\end{eqnarray}

From now on, we will refer to \equ{orbital_eqs} and \equ{spin-spin_V}
as the full models: {\sl full spin-orbit model} if we take
$V_{per}=V_2$, in which the angles $\theta_1$, $\theta_2$ appear in
different trigonometric terms, and {\sl full spin-spin model} if
$V_{per}=V_2+V_4$, which contains trigonometric terms with
combinations of the rotation angles $\theta_1$, $\theta_2$. These
names are motivated by the well-known spin-orbit model,
\cite{cel2010}, and the spin-spin model from \cite{mis2021}.  If we
consider the models under \Cref{a:keplerian} which gives a constraint
on the orbit, we speak of {\sl Keplerian spin-orbit model} and {\sl
  Keplerian spin-spin model} (compare with \Cref{sec:Kepler}).

\begin{comment}
\comentario{Note that, since $\partial_{\phi_j} V_2$ only depends on
  $\phi_j$, then, in the spin-orbit model the dynamics of the angle
  $\phi_1$ is decoupled from the dynamics of $\phi_2$. However,
  $\partial_{\phi_j} V_4$ depends on both $\phi_1$ and $\phi_2$ so
  that in the spin-spin model there is coupling. In this way, the
  spin-orbit is given by a pendulum-like periodic equation for each
  $\phi_j$, whereas, the spin-spin is given by a periodic system of
  two pendulum-like equations.

Regarding the models given by equations \cref{orbital_eqs} and
\cref{spin-spin_V}, in the full spin-orbit model ($V_{per}=V_2$), the
dynamics of $\phi_1$ is still coupled to the dynamics of $\phi_2$,
although only through the orbital variables. On the other hand, in the
full spin-spin model ($V_{per}=V_2+V_4$) all the variables are fully
coupled.}
\end{comment}

\subsubsection{Conservation of the angular momentum}

Note that the Hamiltonian $H$ in \cref{H_theta} is invariant under the
transformation $(r,f,\theta_1, \theta_2) \mapsto (r, f+\delta f,
\theta_1+\delta f, \theta_2+\delta f)$, where $\delta f$ is an
infinitesimal angular increase, because the angular arguments of
$V(r,f,\theta_1,\theta_2)$ only depend on the differences $\theta_1-f$
and $\theta_2-f$. This symmetry is related, by Noether's theorem
\cite{gol1980}, with a conserved quantity, say, the total angular
momentum $p_f+p_1+p_2$.  This can be proved through the following
change of variables

\begin{equation*}
    (r,f,\theta_1,\theta_2,p_r,p_f,p_1,p_2)\mapsto
    (r,f,\phi_1,\phi_2,p_r,P_f,p_1,p_2)\ ,
\end{equation*}

\noindent where

\begin{equation}\label{canonical}
    \phi_j=\phi_j(f,\theta_j)=\theta_j-f,\quad
    P_f=P_f(p_f,p_1,p_2)=p_f+p_1+p_2\ .
\end{equation}

The transformation of coordinates \cref{canonical} is canonical, since
\[
\d r\wedge \d p_r + \d f\wedge \d p_f + \sum_{j=1}^2\d \theta_j\wedge
\d p_j = \d r\wedge \d p_r + \d f\wedge \d P_f + \sum_{j=1}^2\d
\phi_j\wedge \d p_j\ .
\]
Then, the Hamiltonian \equ{H_theta} in this new set of variables is
given by
\begin{equation}\label{H_phi}
    \mathcal H(r,f,\phi_1,\phi_2,p_r,P_f,p_1,p_2)=\frac{p_r^2}{2\mu}
    + \frac{(P_f-p_1-p_2)^2}{2\mu r^2}
    + \frac{p_1^2}{2\mathcal C_1} + \frac{p_2^2}{2\mathcal C_2}
    +\mathcal V(r,\phi_1,\phi_2)\ ,
\end{equation}
where $\mathcal V(r,\phi_1,\phi_2)=V(r,f,\phi_1+f,\phi_2+f)$. Now it
is clear that $f$ is an ignorable variable in \cref{H_phi} and that
$P_f$ is a constant of motion, corresponding to the total angular
momentum of the system.

In summary, in the evolution of the system, there is a transfer of
angular momentum between the spin part, given for each body by $
p_j=\mathcal C_j \dot \theta_j$, and the orbital part, given by $p_f =
\mu r^2\dot f$.

\subsection{The Keplerian models}\label{sec:Kepler}

In this section, we introduce the \Cref{a:keplerian} to the model of
Section~\ref{sec:full}.  From \cref{orbital_eqs} and
\cref{spin-spin_V}, it is straightforward to constrain the orbit to be
Keplerian; only in the orbital part we retain just the term $V_0$ of
the potential $ V=V_0+ V_{per}$ in \equ{V0per}, thus obtaining the
following equations:
\begin{align}\label{orbital_eqs_kep}
  \dot r = \frac{p_r}{\mu}, \quad
  \dot f &= \frac{p_f}{\mu r^2},\quad \dot p_r =  \frac{p_f^2}{\mu r^3} - \partial_r V_0 , \quad
  \dot p_f =  - \partial_{f} V_0 = 0,
\intertext{and}
\label{spin-spin_V_trun}
  \dot \theta_j &=\frac{p_j}{\mathcal C_j},\quad
  \dot p_j =  - \partial_{\theta_j} V=- \partial_{\theta_j}V_{per}\ .
\end{align}

A convenient procedure to numerically integrate the equations of
motion \equ{spin-spin_V_trun} is presented in \Cref{app:diff}.

\subsubsection{Orbital motion}
Note that since $\partial_{\theta_j} V_0=0$, the system
\cref{orbital_eqs_kep} is now decoupled from
\cref{spin-spin_V_trun}. Moreover, \cref{orbital_eqs_kep} is the
Kepler problem, whose solutions depend on the eccentricity $e$ and the
semi-major axis $a$ of the orbit. Here we assume for simplicity that
the orbit is a $2\pi$-periodic Keplerian ellipse of eccentricity
$e\in[0,1)$ with focus at the origin and with the periapsis on the
  positive $x$-axis.

Since the orbital period is $2\pi$, then, we can take the time $t$ to
coincide with the {\sl mean anomaly}. We denote by $u$ the {\sl
  eccentric anomaly}, which, in our units, is related to the mean
anomaly by Kepler's equation
\begin{equation}\label{t}
t=u-e\sin u\ .
\end{equation}
The orbital radius is related to $u$ by
\begin{equation}\label{r}
r=a(1-e \cos u)\ .
\end{equation}
We can write the vector $\vec r\in \C$ in terms of the eccentric
anomaly also as
\begin{equation}\label{f}
r\exp (i f)=a(\cos u-e + i \sqrt{1-e^2} \sin u)\ .
\end{equation}
Note that for $t=0$ we assumed, without loss of generality, that
$f=u=0$, and consequently, $f=u=\pi$ when $t=\pi$. From \cref{f} we
obtain the following useful relations between $f$ and $u$
\begin{equation}\label{ff}
\cos f = \frac{\cos u-e}{1-e \cos u}\ ,\qquad \sin f =
\frac{\sqrt{1-e^2}\sin u}{1-e \cos u}\ .
\end{equation}

With the previous definitions, the Keplerian orbit of eccentricity $e$
and semi-major axis $a$ is given by the functions
\begin{equation}\label{kepler_orbit}
    r=r(t;a,e),\quad f=f(t;e),\quad
    p_r=p_r(t;a,e),\quad p_f=p_f(a,e)=\mu a^2\sqrt{1-e^2}\ ,
\end{equation}
that correspond to the solution of equations \cref{orbital_eqs_kep}
generated by the initial conditions
\begin{equation}\label{kepler_in}
    r(0)=a(1-e),\quad p_r(0)=0,\quad f(0)=0,\quad p_f(0)=\mu
    a^2\sqrt{1-e^2}\ .
\end{equation}

\vskip .1in

\subsubsection{Spin motion}\label{sub:kepler_spin}
The spin motion is described by equations \cref{spin-spin_V_trun} with
the Keplerian periodic input \cref{kepler_orbit} given implicitly by
\cref{t,r,f}. This motion can be described by the non-autonomous
Hamiltonian
\begin{equation}\label{H_K}
    H_K(t,\theta_1,\theta_2,p_1,p_2) =
    H(r(t;a,e),f(t;e),\theta_1,\theta_2,p_r(t;a,e),p_f(a,e),p_1,p_2)\ ,
\end{equation}
where $H(r,f,\theta_1,\theta_2,p_r,p_f,p_1,p_2)$ is the Hamiltonian of
the full model defined in \cref{H_theta}. The Hamiltonian \equ{H_K} is
hence of the form
\begin{equation}\label{HK}
H_K(t,\theta_1,\theta_2,p_1,p_2)={{p_1^2}\over {2\mathcal
    C_1}}+{{p_2^2}\over {2\mathcal C_2}}+W(t,\theta_1,\theta_2)\ ,
\end{equation}
where the potential $W$ is $2\pi$-periodic in $t$ and $\pi$-periodic
in $\theta_1$ and $\theta_2$. The equations of motion
\cref{spin-spin_V_trun} take the form
\begin{equation}\label{spin_spin_ham}
    \dot \theta_j = \frac{p_j}{\mathcal C_j},\quad \dot p_j =
    -\partial_{\theta_j} W(t,\theta_1,\theta_2)\ .
\end{equation}

Let us define the non-dimensional parameters of the model:
\begin{equation}\label{lambda_j}
    \lambda_j = 3\frac{\mu}{M_j} \frac{d_j}{\mathcal C_j} \ , \qquad
    \sigma_j=\frac{1}{3}\frac{\mathcal C_j}{\mu a^2},\qquad \hat
    q_j=\frac{q_j}{M_j a^2}\ ,
\end{equation}
where $\lambda_j$ represents the equatorial oblateness of $\mathcal
E_j$; $\sigma_j$ is the ratio between the moment of inertia of
$\mathcal E_j$ and the orbital one; and $\hat q_j$ measures the
flattening of $\mathcal E_j$ with respect to the size of the
orbit. Note that the parameters in \cref{lambda_j} are small for
bodies that are close to spherical. Besides, not all the parameters
defined previously are free, because we have the constraint $\mathcal
C_1\sigma_2=\mathcal C_2\sigma_1$.

If we take $V_{per} = V_2$, then the system \cref{spin-spin_V_trun}
becomes
\begin{equation}\label{spin_orbit_j}
  \ddot \theta_j + \frac{\lambda_j}{2} \parentesis{\frac{a}{
      r(t;e)}}^{3} \sin(2 \theta_j-2f(t;e))=0 ,\qquad j=1,2\ ,
\end{equation}
that is a system of two uncoupled spin-orbit problems. Each of these
problems depends just on two parameters: $(e,\lambda_j)$. On the other
hand, if $V_{per} = V_2+V_4$, from \cref{spin-spin_V_trun} and
\equ{V024} we obtain the following system for $j=1,2$,
\begin{multline}\label{spin_spin_j}
    0=   \ddot \theta_j +
    \frac{\lambda_j}{2} \Bigg \{   \parentesis{\frac{a}{ r(t;e)}}^{3} \sin(2 \theta_j-2f(t;e)) + \\
    +  \parentesis{\frac{a}{ r(t;e)}}^{5} \Bigg [
    \frac{5}{4} \parentesis{ \hat q_{3-j}+ \frac{5}{7}  \hat q_j} \sin(2 \theta_j-2f(t;e))
    +\frac{25 }{8}  \lambda_{j}\sigma_{j}  \sin(4 \theta_j-4f(t;e))  \\
    + \lambda_{3-j}\sigma_{3-j}\parentesis{\frac{3}{8}  \sin(2 \theta_j-2\theta_{3-j})
    + \frac{35}{8}  \sin(2 \theta_{3-j}+2\theta_j-4f(t;e))}
    \Bigg ] \Bigg\}\ ,
\end{multline}
that we call spin-spin problem. From the previous discussion, this
model depends on seven independent parameters\footnote{In
\cite{mis2021} there was an error because $\hat q_1$ and $\hat q_2$
are actually independent, it is not always true that $\mathcal
C_1\lambda_1 \hat q_2=\mathcal C_2\lambda_2 \hat q_1$, but it can be
regarded as an additional constraint.} $(e;\mathcal C_1, \lambda_1,
\lambda_2, \sigma_1,\hat q_1, \hat q_2)$.  Note that in
\cref{spin_spin_j}, the coupling between the dynamics of $\theta_1$
and $\theta_2$ is given by $\sigma_1$ and $\sigma_2$. Moreover, if
$\hat q_j=\sigma_j = 0$, the spin-spin problem \cref{spin_spin_j} is
reduced to a pair of spin-orbit problems \cref{spin_orbit_j}.

Let us now consider \cref{spin_spin_j} in the case that $\mathcal E_2$
is a sphere, that is, $d_2=q_2=0$.  Then, $\lambda_2=\sigma_2=\hat q_2
=0$, which implies that $\mathcal E_2$ is in uniform rotation
$\theta_2(t)=\dot \theta_2(0)t + \theta_2(0)$. The dynamics of
$\theta_1$ is uncoupled from $\theta_2$ and is given by
\begin{multline}\label{spin_spin_1}
    0=   \ddot \theta_1 +
    \frac{\lambda_1}{2} \Big \{   \parentesis{\frac{a}{ r(t;e)}}^{3} \sin(2 \theta_1-2f(t;e)) + \\
    +  \parentesis{\frac{a}{ r(t;e)}}^{5} \Big [
    \frac{25 \hat q_1}{28}  \sin(2 \theta_1-2f(t;e))
    +\frac{25  \lambda_1 \sigma_1}{8}  \sin(4 \theta_1-4f(t;e))
    \Big ] \Big\}\ ,
\end{multline}
that is a spin-orbit problem up to order $1/r^5$. An equivalent system
was studied previously in \cite{jaf2016b}.  Here the parameters
$\sigma_1$ and $\hat q_1$ perturb the framework of the spin-orbit
problem \cref{spin_orbit_j}.

\subsubsection{Reversing symmetries.}
The equations for the spin motion in the Keplerian models have some
reversing symmetries, i.e., transformations in the phase space that
keep invariant the equations of motion with a time reversal.

\begin{definition}
Consider the differential equation
  \begin{equation}\label{vecF}
     \frac{\d \vec x}{\d t} = \vec F( \vec x),\quad \vec x\in \R^n\ ,
  \end{equation}
  and let $\vec x(t;\vec x_0)$ be the solution of
  \cref{vecF} with initial condition $\vec x(0;\vec x_0)=\vec x_0$.
  \begin{enumerate}
    \item A transformation $R\colon\R^n\to \R^n$ is called a reversing
      symmetry of \cref{vecF} if
      \begin{equation*}
         \frac{\d R(\vec x)}{\d t} = -\vec F( R(\vec x))\ .
      \end{equation*}
    \item The fixed point set of $R$ is given by
      $\operatorname{Fix}(R) = \{\vec x\in \R^n\colon \ R(\vec x)=\vec
      x\}$.
    \item An orbit $o(\vec x_0)=\{\vec x(t;\vec x_0)\colon \ t\in \R
      \}$ is $R$-symmetric if $R(o(\vec x_0))=o(\vec x_0)$.
  \end{enumerate}
\end{definition}

The system \cref{spin_orbit_j} with $j=1$ can be written in the
autonomous form \cref{vecF} with
\begin{equation}\label{F_spin-orbit}
  \vec x = (t,\theta_1,\dot \theta_1), \quad \vec F (\vec x) =
  \parentesis{1,\dot \theta_1,- \frac{\lambda_1}{2}
  \parentesis{\frac{a}{ r(t;e)}}^{3} \sin(2 \theta_1-2f(t;e))}\ .
\end{equation}
One can easily check that each transformation defined by
\begin{equation}\label{RR}
    R_{\alpha,\beta}(\vec x) = (2\alpha-t,2\beta-\theta_1,\dot
    \theta_1), \quad \text{with} \quad (\alpha,\beta) \in \pi \Z
    \times \frac{\pi}{2} \Z\ ,
\end{equation}
is a reversing symmetry of equation \cref{vecF} with $\vec F$ as in
\equ{F_spin-orbit} because
\[
 f(2\alpha-t;e) = 2\alpha-f(t;e),\quad
 r(2\alpha-t;e)= r(t;e)\ .
\]
The same is true replacing in \equ{RR} the quantities $\theta_1$,
$\dot\theta_1$ by $\theta_2$, $\dot\theta_2$.

On the other hand, the spin-spin problem in its Hamiltonian
formulation is given by \cref{spin_spin_ham}, that can be written as
\cref{vecF} with
\begin{equation}\label{F_spin-spin}
    \vec x = (t,\theta_1,\theta_2,p_1,p_2), \quad \vec F (\vec x) =
    \parentesis{1,\frac{p_1}{\mathcal C_1},\frac{p_2}{\mathcal C_2},-
      \partial_{\theta_1}W(t,\theta_1,\theta_2),-
      \partial_{\theta_2}W(t,\theta_1,\theta_2)}\ .
\end{equation}
Each transformation defined by
\begin{equation}\label{RRR}
    R_{\alpha,\beta_1,\beta_2}(\vec x) =
    (2\alpha-t,2\beta_1-\theta_1,2\beta_2-\theta_2,p_1,p_2), \quad
    \text{with} \quad (\alpha,\beta_1,\beta_2) \in \pi \Z \times
    \frac{\pi}{2} \Z\times \frac{\pi}{2} \Z\ ,
\end{equation}
is a reversing symmetry of equation \cref{vecF}.

In the following sections we are going to emphasize the study of
orbits that are symmetric with respect to the mentioned reversing
symmetries using the following lemma.

\begin{lemma}[from Theorem 4.1 in \cite{lamb1998}] \label{lemmafix}
  An orbit of \cref{vecF} is $R$-symmetric if, and only if, it
  intersects the fixed point set $\operatorname{Fix} (R)$.
\end{lemma}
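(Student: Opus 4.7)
The plan is to exploit the fundamental consequence of the reversing symmetry condition: the flow conjugated by $R$ is the reverse-time flow. Concretely, I would first establish the key identity
\begin{equation*}
  R(\vec x(t;\vec x_0)) = \vec x(-t;R(\vec x_0)) \quad \text{for all } t \in \R,\ \vec x_0\in\R^n.
\end{equation*}
This follows because, by the definition of a reversing symmetry, the curve $t\mapsto R(\vec x(-t;\vec x_0))$ satisfies
\begin{equation*}
  \frac{\d}{\d t} R(\vec x(-t;\vec x_0)) = \vec F(R(\vec x(-t;\vec x_0))),
\end{equation*}
so it is an integral curve of \cref{vecF} whose value at $t=0$ equals $R(\vec x_0)$; uniqueness of solutions then forces it to coincide with $\vec x(\cdot;R(\vec x_0))$. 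Here I would also note that the reversing symmetries introduced in \cref{RR} and \cref{RRR} are involutions ($R^2=\mathrm{id}$), which will be used implicitly below.

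For the implication $(\Leftarrow)$, suppose $\vec x_\ast \in o(\vec x_0)\cap \operatorname{Fix}(R)$, and write $\vec x_\ast = \vec x(t_\ast;\vec x_0)$. Since $o(\vec x_0)=o(\vec x_\ast)$, applying the key identity with base point $\vec x_\ast$ gives
\begin{equation*}
  R(\vec x(t;\vec x_\ast)) = \vec x(-t;R(\vec x_\ast)) = \vec x(-t;\vec x_\ast) \in o(\vec x_\ast) = o(\vec x_0),
\end{equation*}
so $R(o(\vec x_0))\subseteq o(\vec x_0)$. Applying $R$ once more and using $R^2=\mathrm{id}$ yields the reverse inclusion, hence $R$-symmetry.

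For the converse $(\Rightarrow)$, assume $R(o(\vec x_0))=o(\vec x_0)$. Then in particular $R(\vec x_0)\in o(\vec x_0)$, so there exists $s\in\R$ with $R(\vec x_0) = \vec x(s;\vec x_0)$. Plugging this into the key identity,
\begin{equation*}
  R(\vec x(t;\vec x_0)) = \vec x(-t;R(\vec x_0)) = \vec x(-t;\vec x(s;\vec x_0)) = \vec x(s-t;\vec x_0).
\end{equation*}
Evaluating at $t=s/2$ gives $R(\vec x(s/2;\vec x_0)) = \vec x(s/2;\vec x_0)$, exhibiting a point of $o(\vec x_0)$ fixed by $R$. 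The main obstacle is essentially the bookkeeping needed to set up the conjugation identity correctly; once it is in place the argument is a short computation that uses only uniqueness of solutions and the involution property of $R$.
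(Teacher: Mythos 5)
The paper states this lemma without proof, attributing it to Theorem~4.1 of \cite{lamb1998}, so there is no in-paper argument to compare against; your proof is a correct, self-contained reconstruction of the standard argument from the reversible-systems literature. The key step is exactly right: the conjugation identity $R(\vec x(t;\vec x_0))=\vec x(-t;R(\vec x_0))$ follows from uniqueness once one checks that $t\mapsto R(\vec x(-t;\vec x_0))$ solves \cref{vecF}, and both implications then reduce to one-line computations (the midpoint $t=s/2$ for $(\Rightarrow)$, and the change of parameter $t\mapsto -t$ for $(\Leftarrow)$).

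Two small remarks. First, the involution property $R^2=\mathrm{id}$, which you flag as an auxiliary fact, is not actually needed anywhere in your argument: in $(\Leftarrow)$ you already obtain equality of sets directly, because $R(o(\vec x_\ast))=\{\vec x(-t;\vec x_\ast):t\in\R\}=o(\vec x_\ast)$ is a reparametrization of the same orbit, so invoking $R$ a second time is redundant; and $(\Rightarrow)$ does not use it at all. (Involutivity \emph{is} part of the standing hypotheses in \cite{lamb1998} and is satisfied by the concrete $R_{\alpha,\beta}$ and $R_{\alpha,\beta_1,\beta_2}$ of \cref{RR,RRR}, so there is no harm in noting it, but it does no work here.) Second, evaluating the flow at $t=s/2$ tacitly assumes solutions are defined for the relevant times; this global-existence hypothesis is implicit in the paper's setup, where the autonomized systems \cref{F_spin-orbit} and \cref{F_spin-spin} have complete flows.
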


\section{Periodic and quasi-periodic solutions of the Keplerian
models} \label{sec:periodic}

In this section we provide the definitions of spin-orbit and spin-spin
resonances, giving some results on the existence of periodic orbits
(Section~\ref{sec:resonances}) and KAM tori (Section~\ref{sec:KAM}).

\subsection{The spin-orbit and spin-spin resonances}
\label{sec:resonances}

The periodic solutions of the Keplerian models presented in the
Section \ref{sec:Kepler} correspond to resonances between the orbital
motion and the spin motion. The expansion of the potential in
\equ{V024} contains much interesting information concerning such
resonances. First, let us introduce the definition of spin-orbit
resonances.

\begin{definition}\label{def:so_res}
We say that the ellipsoid $\mathcal E_1$ is in a standard spin-orbit
resonance of order $m:n$ with $m\in\Z,n\in\Z \setminus \{0\}$, if
%\begin{equation}\label{resSO}
%\theta_j(t+2\pi q)=\theta_j(t)+2\pi k\ .
%\end{equation}
\begin{equation}\label{resSO}
    \theta_1(t+2\pi n)=\theta_1(t)+2\pi m\ .
\end{equation}
The associated resonant angle $\psi_1^{m:n}(t) = m t- n \theta_1(t)$
is a periodic function of period $2\pi n$. The same definition holds
for $\mathcal E_2$.
\end{definition}

Recalling that we have normalized the mean motion to unity, we remark
that \Cref{def:so_res} states that the ratio of the orbital period of
$\mathcal E_j$ over its period of rotation is $m/n$,
$n\ne0$. Additionally, according to \Cref{def:so_res}, the resonance
$m:n$, $n\ne0$, is also of order $km:kn$, $k\in \Z \backslash\{0\}$,
but the converse is not true in general. For example, with
\cref{resSO}, the resonance $1:1$ is also of order $2:2$, but a
resonance $2:2$ may not be of order $1:1$. We can say that the
resonance $m:n$ is of higher order than the resonance $m':n'$ if
$m/n>m'/n'$. This will be denoted using the notation $m:n>m':n'$.

Next, we introduce a different definition of spin-orbit resonance.

\begin{definition}\label{def:so_res_bal}
  We say that the ellipsoid $\mathcal E_1$ is in a \textit{balanced}
  spin-orbit resonance of order $m:2$, $m\in\Z$, if
  \begin{equation}\label{resSO_bal}
     \theta_1(t+2\pi)=\theta_1(t)+ m\pi\ .
  \end{equation}
  In this case, the resonant angle $\psi_1^{m:2}(t)$ is
  $2\pi$-periodic. The same definition holds for $\mathcal E_2$.
\end{definition}
Notice that the two notions \equ{resSO} and \equ{resSO_bal} are not
equivalent: \equ{resSO_bal} implies \equ{resSO} for $m:2$, but the
converse is not true. Actually, note that a balanced $2k:2$ resonance,
with $k\in \Z$, is a spin-orbit resonance of order $k:1$. This new
definition was motivated by \cite{bel1975}, where the solutions
associated to the resonance $3:2$ of the spin-orbit problem (say,
\cref{spin_orbit_j} with $j=1$) were studied numerically. Basically,
they found out that the solutions satisfying \equ{resSO}, but not
\equ{resSO_bal}, appear only for large values of $\lambda_1$ ($\gtrsim
1$), also, for a given point $(e,\lambda_1)$ the solutions appear in
multiplets, and finally, the corresponding resonant angles have large
amplitudes ($|\psi_1^{3:2}(t)| \gtrsim 0.75$), see Table 1 in
\cite{bel1975}. On the other hand, solutions that obey \equ{resSO_bal}
exist for any point in the $(e,\lambda_1)$-plane, including large
regions of uniqueness of solution and resonant angle with small
amplitude. Note that such amplitude is a measure of the deviation of
the solution with respect to the uniform rotation of angular velocity
$\frac{3}{2}t$. In \Cref{def:so_res_bal} we generalize these two types
of resonances for any order $m:2$, since this let us determine the
main resonances in the first orbital revolution.

From \cite{dev1958}, for instance, we know that a good tool to study a
differential equation that has some reversing symmetries is by
studying the periodic orbits that are invariant under such
transformations.

In the next proposition we provide some boundary conditions that
characterize the symmetric orbits in spin-orbit resonances.

\begin{proposition}\label{prop:restypes}
  The following statements hold for the spin-orbit problem
  \cref{spin_orbit_j}, with $j=1$ (ellipsoid $\mathcal E_1$):
  \begin{enumerate}
     \item \label{prop:restypes-1} Any $R_{\alpha,\beta}$-symmetric
       orbit, with $R _{\alpha,\beta}$ defined in \eqref{RR},
       associated to a $m:n$ spin-orbit resonance is equivalent to a
       solution that satisfies the following Dirichlet conditions:
        \begin{equation}\label{Dirichlet_odd}
            \theta_1(\alpha)=\beta,\quad \theta_1(\alpha+n\pi)=\beta +
            m\pi\ ,
        \end{equation}
        with $\alpha\in\{0,\pi\}$ and $\beta\in \{0,\frac{\pi}{2}\}$
        (four combinations). Moreover, such solution satisfies the
        following symmetry property
        $\theta_1(t)=2\beta-\theta_1(2\alpha-t)$.

     \item \label{prop:restypes-2} There are two independent types of
       $R_{\alpha,\beta}$-symmetric orbits representing a balanced
       $m:2$ spin-orbit resonance and are given by:
       \begin{align}
           \text{Type } 0: \qquad \theta_1(0) &=0,\quad \theta_1(\pi)
           = \frac{m\pi}{2}\ , \label{Dirichlet_I} \\ \text{Type }
           \frac{\pi}{2}: \qquad \theta_1(0) &= \frac{ \pi}{2},\quad
           \theta_1(\pi) =\frac{(m+1)\pi}{2}\ .\label{Dirichlet_II}
       \end{align}
       Moreover, the corresponding symmetry relations are:
       $\theta_1(t)=-\theta_1(-t)$ for type I and
       $\theta_1(t)=\pi-\theta_1(-t)$ for type II.
    \end{enumerate}
    The same is true for \cref{spin_orbit_j}, with $j=2$ (ellipsoid
    $\mathcal E_2$), and also for \cref{spin_spin_1}.
\end{proposition}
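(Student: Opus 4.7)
The plan is to apply \Cref{lemmafix} and convert $R_{\alpha,\beta}$-symmetry into a functional equation for $\theta_1$. From \equ{RR}, the set $\operatorname{Fix}(R_{\alpha,\beta})$ consists of states with $t = \alpha$ and $\theta_1 \equiv \beta \pmod{\pi}$ (the latter coming from $2\theta_1 \equiv 2\beta \pmod{2\pi}$). Since the right-hand side of \equ{spin_orbit_j} is $\pi$-periodic in $\theta_1$, shifting by a multiple of $\pi$ maps solutions to solutions, so one may normalize the representative orbit so that $\theta_1(\alpha) = \beta$. By the definition of a reversing symmetry, the orbit then satisfies the pointwise identity
\[
  \theta_1(2\alpha - t) = 2\beta - \theta_1(t), \qquad t \in \R,
\]
which is precisely the symmetry relation claimed in the statement (differentiating recovers $\dot\theta_1(2\alpha - t) = \dot\theta_1(t)$, consistent with the third component of $R_{\alpha,\beta}$).

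For item~(1), I would combine this identity with the $m:n$ resonance \eqref{resSO}. Evaluating the symmetry at $t = \alpha - n\pi$ gives $\theta_1(\alpha + n\pi) = 2\beta - \theta_1(\alpha - n\pi)$; using \eqref{resSO} to eliminate $\theta_1(\alpha - n\pi)$ yields $\theta_1(\alpha + n\pi) = \beta + m\pi$, so together with $\theta_1(\alpha) = \beta$ one obtains the Dirichlet pair \eqref{Dirichlet_odd}. The four listed combinations $(\alpha,\beta) \in \{0,\pi\} \times \{0,\pi/2\}$ exhaust the inequivalent reversing symmetries: any other choice in $\pi\Z \times (\pi/2)\Z$ is identified with one of them via the $2\pi$-periodicity in $t$ of the driving and the $\pi$-periodicity in $\theta_1$ of the equation.

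For item~(2), I would fix $\alpha = 0$ and invoke the stronger balanced condition \eqref{resSO_bal}. For $\beta = 0$ the symmetry reads $\theta_1(-t) = -\theta_1(t)$, and \eqref{resSO_bal} at $t = -\pi$ gives $\theta_1(\pi) = -\theta_1(\pi) + m\pi$, hence $\theta_1(\pi) = m\pi/2$. For $\beta = \pi/2$ the symmetry reads $\theta_1(-t) = \pi - \theta_1(t)$, and the analogous manipulation yields $2\theta_1(\pi) = (m+1)\pi$. These are exactly \eqref{Dirichlet_I} and \eqref{Dirichlet_II}, together with the stated Type~$0$ and Type~$\pi/2$ symmetry relations. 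The case $\alpha = \pi$ produces $2\pi$ time-shifts of the same orbits, accounting for the ``two independent types'' claim.

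Finally, the same argument applies to $j = 2$ in \cref{spin_orbit_j} without change. For \cref{spin_spin_1} I would only need to verify that each $R_{\alpha,\beta}$ is still a reversing symmetry: the additional $\sin(4\theta_1 - 4f)$ term is $(\pi/2)$-periodic (hence $\pi$-periodic) in $\theta_1$ and transforms correctly under $t \mapsto 2\alpha - t$ via $f(2\alpha - t) = 2\alpha - f(t)$ and $r(2\alpha - t) = r(t)$, so the derivation carries over verbatim. The main subtlety throughout is the bookkeeping of equivalences --- one must check that the four (respectively, two) cases listed really are inequivalent and exhaust all symmetric resonant orbits modulo the time-translation and $\theta_1$-shift symmetries of the equation.
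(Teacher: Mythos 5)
Your approach follows the paper's closely (apply \Cref{lemmafix}, normalize so the orbit intersects the fixed set, derive the symmetry relation, and combine it with the resonance definition), and everything you prove is correct. However, there is a genuine gap: you prove only one direction of the claimed equivalence. For item~(1) you show that an $R_{\alpha,\beta}$-symmetric orbit in $m{:}n$ resonance satisfies \eqref{Dirichlet_odd}, but the proposition states (and the subsequent remark about the Newton method requires) the converse as well, namely that a solution with $\theta_1(\alpha)=\beta$ satisfying the Dirichlet condition $\theta_1(\alpha+n\pi)=\beta+m\pi$ is automatically in $m{:}n$ resonance. The paper proves this by setting $\theta_1(\alpha+n\pi)=\beta+m\pi$, $\dot\theta_1(\alpha+n\pi)=\tilde\beta$, and using the symmetry relations $\theta_1(t)=2\beta-\theta_1(2\alpha-t)$ and $\dot\theta_1(t)=\dot\theta_1(2\alpha-t)$ to deduce $\theta_1(\alpha-n\pi)=\beta-m\pi$ and $\dot\theta_1(\alpha-n\pi)=\tilde\beta$; since the state at $t=\alpha-n\pi$ and at $t=\alpha+n\pi$ differ exactly by the shift $\theta_1\mapsto\theta_1+2\pi m$ with equal velocities, and the vector field is $2\pi$-periodic in $t$ and $\pi$-periodic in $\theta_1$, the orbit closes up as required by \eqref{resSO}. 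The same converse argument is implicitly needed for item~(2). Without it, you have shown the Dirichlet conditions are necessary but not that they are sufficient, so the Newton-method application mentioned right after the proposition would not be justified.

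A smaller point: your characterization of $\operatorname{Fix}(R_{\alpha,\beta})$ as $\{t=\alpha,\ \theta_1\equiv\beta\ (\mathrm{mod}\ \pi)\}$ is not quite right as stated --- on $\R^3$, $R_{\alpha,\beta}$ has fixed set $\{t=\alpha,\ \theta_1=\beta\}$ exactly; the ``mod $\pi$'' ambiguity enters only when identifying different choices of $(\alpha,\beta)\in\pi\Z\times(\pi/2)\Z$ that produce equivalent dynamics because $F$ is $2\pi$-periodic in $t$ and $\pi$-periodic in $\theta_1$. Your subsequent normalization argument handles this correctly, so the slip is not fatal, but the wording should be tightened. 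Your treatment of the $\alpha=\pi$ case for item~(2) --- deriving the $R_{\pi,\beta'}$-symmetry from $R_{0,\beta}$-symmetry via the $2\pi$-shift and the $m\pi$ increment --- is a valid alternative to the paper's argument via the types $0'$ and $\pi/2'$, though to complete it you should make explicit that $\beta'=\beta+m\pi/2 \pmod{\pi}$, which equals $\beta$ for $m$ even and $\beta+\pi/2$ for $m$ odd; this is precisely the parity bookkeeping the paper records when identifying types.
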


\begin{proof}
Let us apply \Cref{lemmafix} to \cref{vecF} with $\vec F(\vec x)$
given by \cref{F_spin-orbit}. The fixed point set of each reversing
symmetry $R_{\alpha,\beta}$ is $\operatorname{Fix} (R_{\alpha,\beta})=
\{ (t,\theta_1,\dot \theta_1)\colon \ t=\alpha,\theta_1=\beta\}$,
then, the symmetric orbits can be found with initial conditions
$\theta_1(\alpha)=\beta$. Since $\vec F(\vec x)$ is $2\pi$-periodic in
$t$ and $\pi$-periodic in $\theta_1$, it is enough to consider
$\alpha\in\{0,\pi\}$ (the periapsis and the apoapsis) and $\beta\in
\{0,\frac{\pi}{2}\}$.

Now, since $R_{\alpha,\beta}$ is a reversing symmetry, if
$\theta_1(t)$ is a solution of \cref{spin_orbit_j} with $j=1$, so
it is $\psi(t)=2\beta-\theta_1(2\alpha-t)$. Additionally, if
$\theta_1(\alpha) = \beta$, then, both solutions coincide, so the
symmetry relation $\theta_1(t)=2\beta-\theta_1(2\alpha-t)$ holds
for it. Now, if $\theta_1(t)$ is in a $m:n$ spin-orbit resonance,
then, replacing $t=\alpha-n\pi$ in \cref{resSO}, we get
\[
\theta_1(\alpha+n\pi)=\theta_1(\alpha-n\pi)+2\pi m\ .
\] From the
symmetry relation we get additionally that
\[
\theta_1(\alpha-n\pi)=2\beta-\theta_1(\alpha+n\pi)\ .
\]
\noindent Combining the two expressions we prove \cref{Dirichlet_odd}.

Now let us prove the converse, that a solution $\theta_1(t)$
satisfying the conditions \cref{Dirichlet_odd} is in $m:n$ spin-orbit
resonance. Let the initial conditions of such solution be
\begin{equation}\label{init_res}
    \theta_1(\alpha+n\pi)=\beta + m\pi, \quad
    \dot\theta_1(\alpha+n\pi)=\tilde\beta \ ,
\end{equation}
for some real constant $\tilde\beta$.  Since $\theta_1(\alpha)=\beta$,
the solution has the symmetry relations
$\theta_1(t)=2\beta-\theta_1(2\alpha-t)$ and $\dot \theta_1(t)=\dot
\theta_1(2\alpha-t)$. Using such relations we get that
\begin{equation}\label{init_res2}
    \theta_1(\alpha-n\pi)=\beta - m\pi, \quad
    \dot\theta_1(\alpha-n\pi)=\tilde\beta\ .
\end{equation}
The initial conditions \cref{init_res} and \cref{init_res2} show that,
while the time increases $2\pi n$, the angle $\theta_1$ increases in
$2\pi m$, and the angular velocity is the same for both cases, then,
the solution is periodic. With this, we have proved
\cref{prop:restypes-1}.

Let us now consider the balanced $m:2$ case in
\cref{prop:restypes-2}. Note that, using the definition
\cref{resSO_bal} instead of \cref{resSO}, we can follow the same
procedure as before to prove that a balanced $m:2$ solution satisfies
the following boundary conditions
\begin{equation}\label{alphabeta_bal}
    \theta_1(\alpha)=\beta,\quad \theta_1(\alpha\pm\pi)=\beta \pm
    \frac{m\pi}{2}\ ,
\end{equation}
and the symmetry relation $\theta_1(t) =
2\beta-\theta_1(2\alpha-t)$. Then, we get that the four types are in
this case \cref{Dirichlet_I}, with $\alpha=0,\beta=0$;
\cref{Dirichlet_II}, with $\alpha=0,\beta=\pi/2$;
\[
  \text{Type } 0': \quad \theta_1(0)=- \frac{m \pi}{2},\quad
  \theta_1(\pi) =0\ ,
\]
with $\alpha=\pi,\beta=0$, and
\[
  \text{Type } \frac{\pi}{2}':\quad
  \theta_1(0)=\frac{(1-m)\pi}{2},\quad \theta_1(\pi) =\frac{\pi}{2}\ ,
\]
with $\alpha=\pi,\beta=\pi/2$. Since an ellipsoid has a mirror
symmetry with respect to any plane containing a pair of semi-axes, the
angle $\theta_1$ is equivalent to $\theta_1+k\pi$, $k\in\Z$. In
consequence, if $m=2k_1, k_1\in \Z$, type $0'$ is equivalent to type 0
and type $\frac{\pi}{2}'$ is equivalent to type
$\frac{\pi}{2}$. Likewise, for $m=2k_2+1$, $k_2\in \Z$, type $0'$ is
equivalent to type $\frac{\pi}{2}$ and type $\frac{\pi}{2}'$ is
equivalent to type 0. Then, for resonances of order $m:2$, $m\in \Z$,
we will take types 0 and $\frac{\pi}{2}$ as representatives. With this
we have proved \cref{prop:restypes-2}.

The previous facts rely only on the symmetries and the periodicity of
equation \cref{spin_orbit_j}, including the discussion in
\cite{celchi2000}. Since equation \cref{spin_spin_1} for the spin
motion of $\mathcal E_1$, with $\mathcal E_2$ spherical, has exactly
the same properties, then, the proof above is also valid in such case.
\end{proof}

\Cref{prop:restypes} let us characterize all the balanced spin-orbit
resonances in the first half of an orbital revolution, additionally,
the corresponding solutions have a certain symmetry relation. In the
generalization to spin-spin resonances we want to combine different
spin-orbit resonances and we will use the boundary conditions in the
same time interval.

\begin{remark}\label{rem:negative}
 Note that solutions of type $\frac{\pi}{2}$ can be recovered with the
 conditions of type 0 by considering negative values of
 $\lambda_1$. More precisely, solutions of type $\frac{\pi}{2}$ of
 equation \cref{spin_orbit_j} with $\lambda_1=\lambda_*>0$ are
 equivalent to solutions of \cref{spin_orbit_j} with
 $\lambda_1=-\lambda_*$ satisfying conditions of type 0. The same
 holds for \cref{spin_spin_1}.
\end{remark}

\begin{remark}
 \Cref{prop:restypes} gives a way to numerically search for balanced
 resonances. Indeed, \cref{Dirichlet_I,Dirichlet_II} can be used to
 apply a Newton method, that is, to find the initial condition $\dot
 \theta _1(0)$ such that the conditions for either Type $0$ or Type
 $\frac{\pi}{2}$ are satisfied.
\end{remark}

Next we introduce the following definition, which deals with the spins
of both objects.

\begin{definition}
 We say that the ellipsoids $\mathcal E_1$, $\mathcal E_2$ are in a
 standard spin-spin resonance\footnote{We remark that this is a
 practical definition because it is useful for the physical
 interpretation. However, we notice that there is a more general
 definition given by the resonant combination
 $n_0t-n_1\theta_1-n_2\theta_2$, with $n_0,n_1,n_2$ integers.} of
 order $(m_1:n_1,m_2:n_2)$ with $m_1,m_2\in\Z,n_1,n_2\in\Z \setminus
 \{0\}$, if the ellipsoid $\mathcal E_j$ is in a $m_j:n_j$ spin-orbit
 resonance.  In such case, the resonant angles
 $\psi^{m_1:n_1}_{m_2:n_2}(t) =
 \psi_1^{m_1:n_1}(t)\pm\psi_2^{m_2:n_2}(t)$ are $2\pi n$-periodic
 functions, where $n$ is the least common multiple of $n_1$ and $n_2$.
\end{definition}

An analogous definition holds for resonances of the balanced type.

\begin{definition}
 We say that the ellipsoids $\mathcal E_1$, $\mathcal E_2$ are in a
 balanced spin-spin resonance of order $(m_1:2,m_2:2)$ with
 $m_1,m_2\in\Z$, if the ellipsoid $\mathcal E_j$ is in a $m_j:2$
 spin-orbit resonance for $j = 1,2$.
\end{definition}

Note that a spin-spin resonance of order $(m_1:n_1,m_2:n_2)$ is also
of order $(\kappa_1m_1:n,\kappa_2m_2:n)$, where
$n=\kappa_1n_1=\kappa_2n_2$ is the least common multiple of $n _1$ and
$n_2$. Again, the converse is not true in general. For example, the
resonance $(1:1,3:2)$ is of order $(2:2,3:2)$, but not the
opposite. However, a balanced resonance $(2:2,3:2)$ is a spin-spin
resonance of order $(1:1,3:2)$.

The following proposition generalizes \Cref{prop:restypes} to the
spin-spin problem.

\begin{proposition}\label{prop:resSStypes}
  The following statements hold for the spin-spin problem
  \cref{spin_spin_j}:
  \begin{enumerate}
    \renewcommand*{\theenumi}{\roman{enumi}}
    \renewcommand*{\labelenumi}{(\theenumi)}
      \item \label{prop:resSStypes-1} Any
        $R_{\alpha,\beta_1,\beta_2}$-symmetric orbit, with $R
        _{\alpha,\beta_1,\beta_2}$ defined in \eqref{RRR}, associated
        to a $(m_1:n,m_2:n)$ spin-orbit resonance is equivalent to a
        solution that satisfies the following Dirichlet conditions:
        \[
            \theta_j(\alpha)=\beta_j,\quad
            \theta_j(\alpha+n\pi)=\beta_j + m_j\pi\ ,
        \]
        with $\alpha\in\{0,\pi\}$ and $\beta_j\in \{0,\frac{\pi}{2}\}$
        (eight combinations). Moreover, such solution satisfies the
        following symmetry property
        $\theta_j(t)=2\beta_j-\theta_j(2\alpha-t)$.

      \item \label{prop:resSStypes-2} There are four independent types
        of $R_{\alpha,\beta_1,\beta_2}$-symmetric orbits representing
        a balanced $(m_1:2,m_2:2)$ spin-spin resonance and are given
        by:
        \begin{equation}\label{Dirichlet_balSS}
            \text{Type } (\beta_1,\beta_2): \quad
            \theta_j(0)=\beta_j,\quad \theta_j(\pi) =
            \beta_j+\frac{m_j\pi}{2}\ ,
        \end{equation}
        with $\beta_j\in \{0,\frac{\pi}{2}\}$. Moreover, the
        corresponding symmetry relation is
        $\theta_j(t)=2\beta_j-\theta_j(-t)$.
    \end{enumerate}
\end{proposition}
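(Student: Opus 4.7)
The plan is to mimic the argument used in \Cref{prop:restypes}, but now working with the spin-spin system \cref{spin_spin_j} rewritten in the autonomous form \cref{vecF} with $\vec F$ given by \cref{F_spin-spin}, and with the reversing symmetries $R_{\alpha,\beta_1,\beta_2}$ in \cref{RRR}. First I would compute the fixed point set
\[
  \operatorname{Fix}(R_{\alpha,\beta_1,\beta_2})
  = \{(t,\theta_1,\theta_2,p_1,p_2)\colon t=\alpha,\ \theta_1=\beta_1,\ \theta_2=\beta_2\}\,,
\]
so that by \Cref{lemmafix} any symmetric orbit is characterized by initial data of the form $\theta_j(\alpha)=\beta_j$. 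The vector field $\vec F$ in \cref{F_spin-spin} is $2\pi$-periodic in $t$ (since $r(\cdot;e)$ and $f(\cdot;e)$ are) and $\pi$-periodic in each $\theta_j$ (visible from the potential $W$ in \cref{HK} through \equ{V024}), so without loss of generality one may restrict to $\alpha\in\{0,\pi\}$ and $\beta_j\in\{0,\pi/2\}$, giving the eight combinations claimed in (i).

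Next I would establish the symmetry relation. If $\theta_j(t)$ is a solution of \cref{spin_spin_j}, then because $R_{\alpha,\beta_1,\beta_2}$ is a reversing symmetry, the functions $\psi_j(t)=2\beta_j-\theta_j(2\alpha-t)$ also solve \cref{spin_spin_j}, with the same initial data at $t=\alpha$ as $\theta_j$ when $\theta_j(\alpha)=\beta_j$. Uniqueness forces $\theta_j(t)=2\beta_j-\theta_j(2\alpha-t)$ for each $j$. Combining this with the resonance condition $\theta_j(t+2n\pi)=\theta_j(t)+2m_j\pi$ evaluated at $t=\alpha-n\pi$ yields the Dirichlet condition $\theta_j(\alpha+n\pi)=\beta_j+m_j\pi$ stated in (i). The converse, that the Dirichlet and symmetry data determine a $(m_1:n,m_2:n)$-resonant orbit, is obtained exactly as in the spin-orbit proof: using the symmetry relation to transfer the data at $\alpha+n\pi$ to $\alpha-n\pi$ and concluding that the shift over the time span $2\pi n$ is exactly $(2m_1\pi,2m_2\pi)$ with matching momenta.

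For part (ii), I would repeat the previous steps replacing the standard resonance condition by the balanced one \cref{resSO_bal} applied to each $\theta_j$, obtaining the analog of \cref{alphabeta_bal}:
\[
  \theta_j(\alpha)=\beta_j,\qquad \theta_j(\alpha\pm\pi)=\beta_j\pm\tfrac{m_j\pi}{2}\,.
\]
This gives eight a priori types indexed by $(\alpha,\beta_1,\beta_2)\in\{0,\pi\}\times\{0,\pi/2\}^2$. The key reduction step is then to exploit the mirror symmetry of each ellipsoid, which identifies $\theta_j$ with $\theta_j+k\pi$, $k\in\Z$. As in the spin-orbit case, when $\alpha=\pi$ each ``primed'' type $(\beta_1,\beta_2)'$ reduces, component by component, to one of the four types $(\beta_1,\beta_2)$ with $\alpha=0$, the identification depending on the parity of $m_1$ and $m_2$. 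This collapses the eight combinations to the four independent types in \cref{Dirichlet_balSS}, each with its symmetry relation $\theta_j(t)=2\beta_j-\theta_j(-t)$.

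The main obstacle is the bookkeeping in this last reduction: one must check, for each of the four parity patterns of $(m_1,m_2)$, that the four $\alpha=\pi$ types are genuinely equivalent (modulo $\pi$ in each $\theta_j$) to one of the four $\alpha=0$ types, so that no additional independent symmetric family is missed and none of the four claimed types collapses onto another. Everything else is a direct transcription of the one-degree-of-freedom argument in \Cref{prop:restypes}, using that the reversing symmetries $R_{\alpha,\beta_1,\beta_2}$ act independently on the two spin coordinates.
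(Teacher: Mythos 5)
Your proposal follows essentially the same path as the paper's proof: apply \Cref{lemmafix} to \cref{F_spin-spin} to get the fixed-point set, use the periodicity of $W$ to restrict $\alpha\in\{0,\pi\}$, $\beta_j\in\{0,\pi/2\}$, derive the symmetry relation by uniqueness, combine with the (standard or balanced) resonance condition to get the Dirichlet conditions, and then use the $\theta_j\mapsto\theta_j+k\pi$ mirror identification to collapse the $\alpha=\pi$ types onto the $\alpha=0$ types. The one step you flag as the ``main obstacle''---verifying the component-wise parity reduction for each of the four parity patterns of $(m_1,m_2)$---is exactly where the paper's proof does the work, and it is indeed straightforward because each $\theta_j$ is handled independently: for $m_j$ even the $\alpha=\pi$ condition is already identical to the $\alpha=0$ one, while for $m_j$ odd it swaps $\beta_j=0\leftrightarrow\pi/2$; doing this separately for $j=1$ and $j=2$ is a well-defined bijection on $\{0,\pi/2\}^2$, so the eight combinations do reduce exactly to the four types $(\beta_1,\beta_2)$ with no collapse among them.
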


\begin{proof}
This proof is based on the fact that the same arguments used to prove
\Cref{prop:restypes} can be generalized in a straightforward way for
the spin-spin problem \cref{spin_spin_j}.

Now we apply \Cref{lemmafix} to \cref{vecF} with $\vec F(\vec x)$
given by \cref{F_spin-spin}. The fixed point set of each reversing
symmetry $R_{\alpha,\beta_1,\beta_2}$ is
\[
 \operatorname{Fix} (R_{\alpha,\beta_1,\beta_2})= \{
 (t,\theta_1,\theta_2,p_1,p_2) \colon \
 t=\alpha,\theta_1=\beta_1,\theta_2=\beta_2\}\ .
\]
Then, due to the periodicity of $W$ in \equ{HK}, the periodic orbits
can be found at $\theta_j(\alpha)=\beta_j$, where we can take any
combination between $\alpha\in\{0,\pi\}$ and $\beta_j\in
\{0,\frac{\pi}{2}\}$. A similar method was used in \cite{gre1979} for
the standard map and was generalized for the spin-orbit problem in
\cite{celchi2000} and for a standard map of two degrees of freedom in
\cite{CFL}.

The rest of the proof of \cref{prop:resSStypes-1} follows analogously
to the proof of item (1) of \Cref{prop:restypes} using the reversing
symmetries.

The proof of \cref{prop:resSStypes-2} needs more detail. In the same
way as \cref{alphabeta_bal}, we obtain easily that the balanced
resonances $(m_1:2,m_2:2)$ are given by
\begin{equation}\label{alphabeta_balj}
    \theta_j(\alpha)=\beta_j,\quad \theta_j(\alpha\pm\pi)=\beta_j
    \pm \frac{m_j\pi}{2}\ .
\end{equation}
We see that \cref{Dirichlet_balSS} corresponds to
\cref{alphabeta_balj} with $\alpha=0$ and the positive sign. From the
case $\alpha=\pi$ and the negative sign we have that
\begin{equation}\label{Dirichlet_balSS2}
  \theta_j(0)=\beta_j- \frac{m_j\pi}{2},\quad \theta_j(\pi)=\beta_j\ .
\end{equation}
Note that if, for example, we take $j=1$ and $m_1=2k_1$, with $k_1\in
\Z$, then, the conditions \cref{Dirichlet_balSS} and
\cref{Dirichlet_balSS2} are equivalent. On the other hand, if
$m_1=2k_1+1$, then, the conditions \cref{Dirichlet_balSS} with
$\beta_1=0$ and $\frac{\pi}{2}$ are equivalent respectively to
\cref{Dirichlet_balSS2} with $\beta_1=\frac{\pi}{2}$ and $0$. The same
is true for $j=2$. Then, with \cref{Dirichlet_balSS} all the
possibilities are covered.
\end{proof}

\begin{remark}
 Results in \Cref{prop:resSStypes} allow to apply a Newton method to
 compute resonances for the spin-spin problem just considering as
 unknowns $\dot \theta _j(0)$ and correct them by imposing the
 conditions in \cref{Dirichlet_balSS} or \cref{Dirichlet_balSS2}.
\end{remark}

For circular orbits ($e=0$), each of the spin-orbit models
\cref{spin_orbit_j} is a classical pendulum whose only stable
equilibrium point corresponds to a $1:1$ resonance that is given by
$\theta_j(t)=t$. Similarly, the spin-spin model \cref{spin_spin_j}
consists of two coupled penduli whose only stable solution is
$\theta_1(t)=\theta_2(t)=t$ that is a $(1:1,1:1)$ resonance. However,
for $e \ne 0$, $f(t;e)$ does not coincide with $t$ and more stable
spin-orbit resonances may appear.

In order to study the spin-orbit and spin-spin resonances, it is
useful to compute the expansion of $V_2$ and $V_4$ up to some
power of the eccentricity. This expansion is obtained solving
Kepler's equation \equ{t} up to a finite order in the
eccentricity, then inserting the solution $u=u(t)$ in \equ{r},
\equ{ff}, expand them in series of the eccentricity and finally
expanding the trigonometric terms appearing in \equ{V024}.

This procedure leads to the expansions of $V_2$ and $V_4$ that, for
simplicity, we give up to the order 2 in the eccentricity in the
Appendix~\ref{app:expansion}. In those expressions, the trigonometric
terms with arguments $\psi_j^{m_j:n_j}(t)=m_j t-n_j \theta_j$ are
associated to $m_j:n_j$ spin-orbit resonances for $\mathcal E_j$,
whereas the terms with argument $\psi^{m_1:n_1}_{m_2:n_2}(t) = (m_1\pm
m_2) t - n_1 \theta_1(t) \mp n_2 \theta_2(t)$ correspond to spin-spin
resonances by combining spin-orbit resonances of orders $m_1:n_1$ and
$m_2:n_2$. For each order of the expansion in the eccentricity $e$,
there are some resonances appearing. They are shown in
\Cref{tab:V_res}, where we can recognize a hierarchy: the most
important spin-orbit resonance is $1:1$, then we have $3:2$, $1:2$ and
so on, because they appear for low orders of the
eccentricity. Resonances of further orders are relevant only for large
eccentricities.

Note that the most relevant spin-orbit resonances are of order $m:2$,
$m\in \Z$. Additionally, spin-spin resonances appearing at order
$e^{\alpha}$ in $V_4$ are obtained by combining spin-orbit resonances
appearing at order $e^{\alpha_1}$ and $e^{\alpha_2}$ in $V_2$ such
that $\alpha_1+\alpha_2=\alpha$.

Finally, note that for $V_2$, the coefficients associated to
spin-orbit resonances are of order one in $d_1,d_2$, whereas for
$V_4$, the spin-orbit coefficients are of order two in
$d_1,d_2,q_1,q_2$, and the spin-spin coupling coefficients are of
order two in $d_1,d_2$.

\begin{table}[ht]
    \begin{center}
        \begin{tabular}{|c||c|c|c|}
            \hline
            &\multicolumn{2}{c|}{spin-orbit resonances}&  spin-spin resonances \\ \hline
            order &   $V_2$ &  $V_4$ & $V_4$         \\ \hline  \hline
            $e^0$ &  $1:1$    & $1:1$  &  combine $1:1$ with $1:1$ \\ \hline
            $e^1$  & $3:2$, $1:2$ &  $3:2$, $1:2$, $3:4$, $5:4$    & combine $1:1$ with $3:2$ and $1:2$ \\   \hline
            $e^2$ &  $1:1$,  $2:1$, $0:1$  & \makecell[c]{$1:1$, $2:1$, $0:1$, \\  $1:2$, $3:2$} & \makecell[c]{combine $1:1$ with $1:1$, $2:1$ and $0:1$ \\
            all combinations between $3:2$ and $1:2$}   \\  \hline
        \end{tabular}
        \vskip .1in
        \caption{Resonances appearing in the expansion of the
          potential $V_2+V_4$ for each order of the eccentricity.}
        \label{tab:V_res}
    \end{center}
\end{table}

\subsection{KAM tori in the spin-spin problem} \label{sec:KAM}
Now we deal with quasi-periodic solutions of the Keplerian version of
the spin-spin model. We denote by
\begin{equation}\label{om}
    \underline{\omega}=(1,\omega_1,\omega_2)
\end{equation}
the frequency vector with
\[
  \omega_1={p_1\over C_1}\ ,\qquad \omega_2={p_2\over C_2}\ .
\]
The Hessian matrix associated to \equ{HK} has determinant different
from zero, whenever $p_1\not=0$, $p_2\not=0$. This implies that
\equ{HK} satisfies the Kolmogorov non-degeneracy condition, which is a
requirement for the applicability of KAM theorem
\cite{kolmogorov1954conservation,Arnold63a,Moser62}.  The other
essential requirement in KAM theory is the assumption that the
frequency satisfies a Diophantine inequality, namely there exist $C>0$
and $\xi\geq 2$, such that
\begin{equation}\label{DC}
 |\underline{k} \cdot \underline{\omega}|^{-1}\leq C|\underline{k}|^\xi
\end{equation}
for $\underline{k}\in\Z^3\setminus\{\underline{0}\}$.

We remark that a possible choice of $\underline{\omega}$ satisfying
\equ{DC} can be obtained as follows.  Let $\alpha$ be an algebraic
number of degree 3, namely the solution of a polynomial equation of
degree 3 with integer coefficients, not being the root of polynomial
equations of lower degree. Let us consider the vector
$\underline{\omega}=(1,\omega_1,\omega_2)$ obtained as
\begin{equation}\label{ba}
    \begin{pmatrix}
        1 \\
        \omega_1 \\
        \omega_2
    \end{pmatrix}=
    \begin{pmatrix}
        1 & 0 & 0 \\
        b_1 & a_{11} & a_{12} \\
        b_2 & a_{21} & a_{22}
    \end{pmatrix}%
    \begin{pmatrix}
        1 \\
        \alpha \\
        \alpha^2
    \end{pmatrix}\ ,
\end{equation}
where $(b_1,b_2)$ and the matrix $A\equiv (a_{mn})$ have rational
coefficients and $\det A\not=0$. By number theory results (see, e.g.,
\cite{CFL}), a vector $\underline{\omega}$ as in \equ{ba} satisfies
\equ{DC} with $\xi=2$. Under smallness conditions of the parameters,
say $\lambda_j$ in \equ{lambda_j}, KAM theory ensures the existence of
a quasi-periodic torus with Diophantine frequency. We remark that the
theory presented in \cite{CCGL20b} for the spin-orbit problem (see
also \cite{CCGL20a,CCGL20c}) can be extended to provide explicit
estimates for \equ{HK} and an explicit algorithm to construct
quasi-periodic solutions.

% All the terms appearing in the expansions of $V_2$ and $V_4$ are of
% order $m:2$. Do we make some comment about resonances of type $m:4$,
% $m:6$, etc? How do we obtain them? (expansion of the solutions in
% the parameter $\lambda$?) Do we mention the difference between
% primary, secondary, etc resonances?}

\section{Qualitative description of the spin models}\label{sec:graph}

In this section, we give a qualitative description of the phase space
associated to the spin-orbit problem (Section~\ref{sec:SO}), the
spin-spin problem with spherical companion
(Section~\ref{sec:spherical}) and with non-spherical companion
(Section~\ref{sec:nonspherical}).

\subsection{Spin-orbit problem ($V_{per} = V_2$)}\label{sec:SO}

Since the system \cref{spin_orbit_j} corresponds to two uncoupled
spin-orbit problems, then the Poincar\'e map of the whole system can
be understood as the direct product of the Poincar\'e maps for each of
the bodies. Consider for example the dynamics of $\mathcal
E_1$. \Cref{fig:Poincare_spin-orbit} is a typical Poincar\'e map of
the spin-orbit problem obtained using a Taylor integrator
\cite{JorbaZ2005} and using a similar approach as the one explained in
\Cref{app:diff}, in this case with parameters $(e,\lambda_1) = (0.06,
0.05) $; it represents solutions at $t=2\pi k$, $k\in \Z$, in the
plane $(\theta_1, \dot \theta_1)$ restricted to $\theta_1\in [-\pi,
  \pi]$. The main stable resonances are tagged with their
corresponding order $m:n$. The Poincar\'e map for sufficiently small
parameters $(e,\lambda_1) $ has the following features:

\begin{enumerate}
\renewcommand{\labelenumi}{\theenumi)}
    \item The main stable spin-orbit resonances are represented by
      fixed points in the plane $(\theta_1, \dot \theta_1)$ surrounded
      by islands of invariant librational tori. High order resonances
      appear above low order resonances.

    \item It looks that the spin-orbit resonances of order $m:2\ge
      1:1$ are balanced: solutions of type 0 \cref{Dirichlet_I} are
      stable and those of type $\frac{\pi}{2}$ \cref{Dirichlet_II} are
      unstable. On the contrary, for the $1:2$ resonance, type 0 is
      unstable and type $\frac{\pi}{2}$ is stable. Concerning other
      resonances \cref{Dirichlet_odd}, for instance in the $3:4$
      resonance, types with $\alpha=0$ are unstable and types with
      $\alpha=\pi$ are stable. Exactly the opposite occurs for the
      case $5:4$.

    \item It is possible to have stable secondary resonances, namely
      small resonances surrounding other resonances. This is clear for
      the $1:1$ resonance. Beyond the librational islands associated
      to the resonance $1:1$ there is a chaotic region that includes
      the unstable resonances and that is larger for large parameters
      $(e,\lambda_1) $. The chaotic region can appear for other
      resonances and is limited by rotational tori that also
      distinguish the domains of resonances of different orders.
\end{enumerate}

\begin{figure}[ht]
    \centering
    %\vglue-2cm
    \scalebox{0.6}{\includegraphics{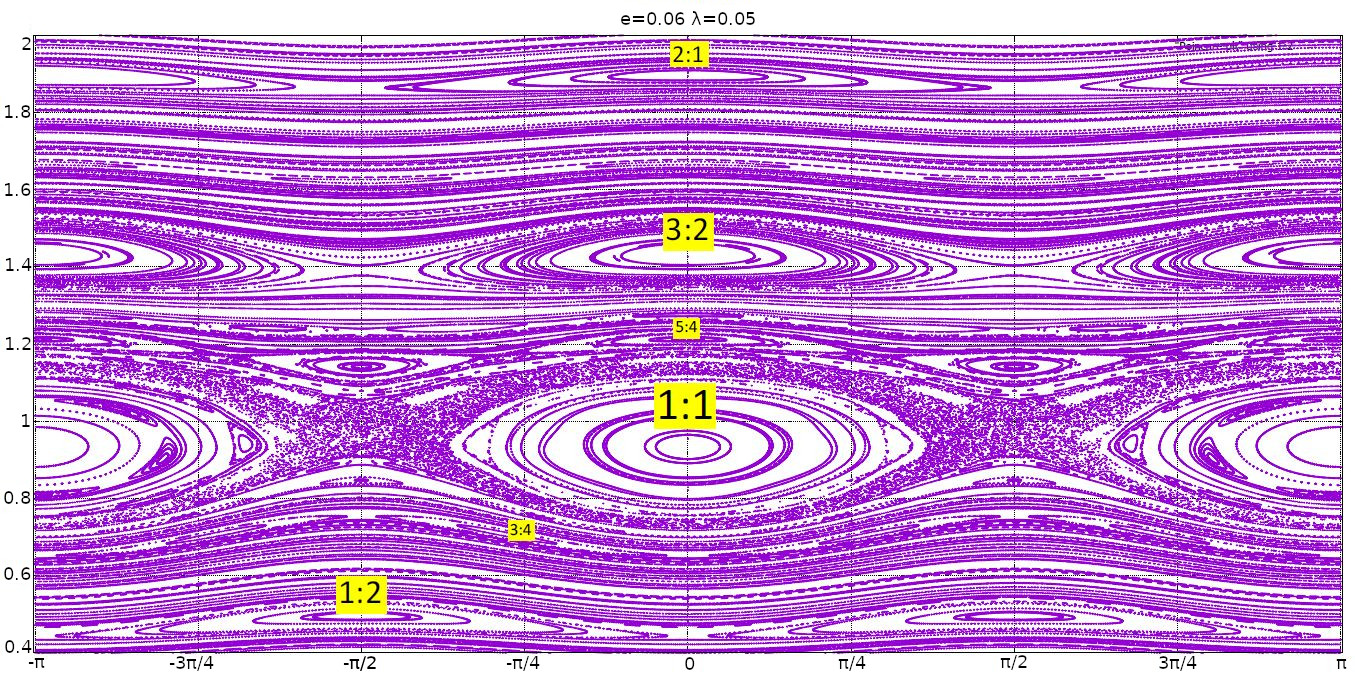}}
    \caption{Poincar\'e map for the spin-orbit problem.}
    \label{fig:Poincare_spin-orbit}
\end{figure}

\subsection{Spin-spin problem ($V_{per} = V_2+V_4$) with spherical companion}
\label{sec:spherical}

Let us now consider the case when $\mathcal E_2$ is a sphere.  Then,
$\theta_2(t)=\theta_2(0)+\dot \theta_2(0)t$ and the dynamics of
$\theta_1$ is given by \cref{spin_spin_1}, that depends on the
parameters $(e,\lambda_1,\hat q_1,\sigma_1)$. Here the parameters
$\sigma_1$ and $\hat q_1$ perturb the previous framework of the
spin-orbit problem, see \Cref{sec:SO}. We can see a comparison between
both problems in \Cref{fig:Poincare_spin-orbit_spin-orbit-high}: we
see that the only qualitative difference between both cases is that
the Poincar\'e map associated to equation \cref{spin_spin_1} is
slightly more chaotic. This minor difference is due to the fact that
we take $\sigma_1=\hat q_1=0.01$, that are small parameters.  As we
will see in \Cref{sec:comparison}, the spin-spin model is a good
approximation for the dynamics of two ellipsoids for $\sigma_j$ and
$\hat q_2$ up to the order of magnitude of about $10^{-2}$, because
larger values could lead to a collision, see \Cref{sec:quantitative}.

\begin{figure}[ht]
    \centering
    %\vglue-2cm
    \scalebox{0.3}{\includegraphics{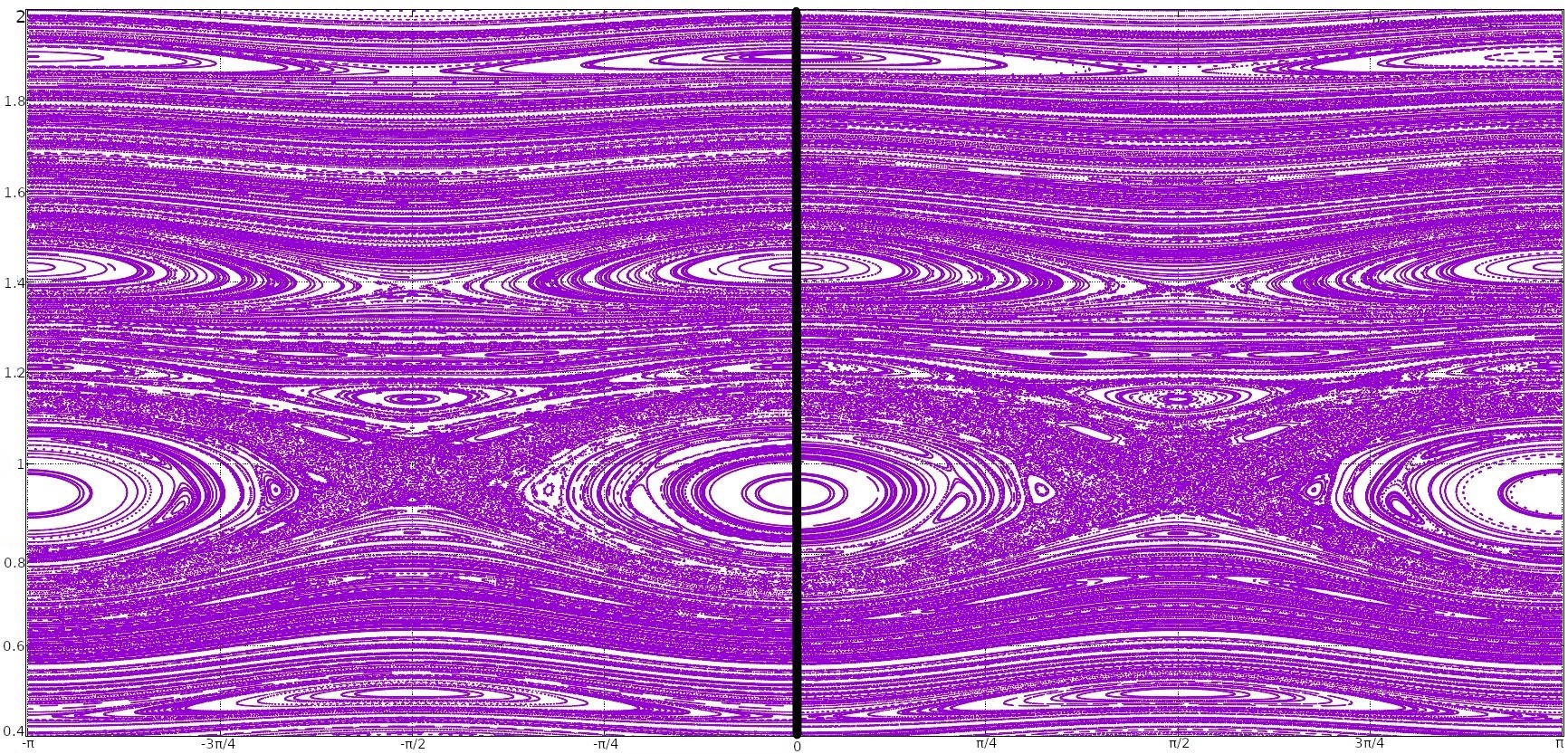}}
    \caption{Poincar\'e maps. Left: usual spin-orbit problem with
      $(e,\lambda_1)=(0.06,0.05)$. Right: spin-orbit problem up to
      order $1/r^5$ with $(e,\lambda_1,\sigma_1,\hat q_1)=(0.06,0.05,
      0.01,0.01)$.}
    \label{fig:Poincare_spin-orbit_spin-orbit-high}
\end{figure}

\subsection{Spin-spin problem ($V_{per} = V_2+V_4$) with non-spherical companion}
\label{sec:nonspherical}

Now we deal with the general system \cref{spin_spin_j}. Let
$\Psi(t)=(\theta_1(t),\theta_2(t),\dot \theta_1(t),\dot \theta_2(t))$
be a solution of \cref{spin_spin_j} and its respective projections
$\Psi_1(t)=(\theta_1(t),\dot \theta_1(t))$ and
$\Psi_2(t)=(\theta_2(t),\dot \theta_2(t))$. From now on we restrict
$\theta_j(t)$ to $[-\pi,\pi]$. Let the Poincar\'e map associated to
such solution be defined by $\mathcal P(\Psi(0))= \Psi(2\pi)$, and its
projections by $\mathcal P_j(\Psi_j(0))= \Psi_j(2\pi)$. It is not
possible to represent the iteration of the map $\mathcal P$ in a
single plot, because it is 4-dimensional, so we will represent the
projections $\mathcal P_j$. That is to say, for a solution $\Psi(t)$,
we are interested in the behavior of the two families of points
$\Psi_1(2\pi k)$ and $\Psi_2(2\pi k)$, $k\in \Z$, in a single
2-dimensional strip $\Pi = \{(x,y)\in \R^2\colon |x|\le \pi\}$. We
recognize the following features:

\begin{enumerate}
\renewcommand{\labelenumi}{\theenumi)}
    \item A solution in spin-spin resonance corresponds to a family of
      isolated recurrent points of $\mathcal P$ (namely, the
      successive points on the Poincar\'e map), whose projections are
      represented in $\Pi$ as a pair of families of recurrent points.

    \item If the spin-spin resonance is stable, then nearby solutions
      would librate around such points. While in the uncoupled system,
      librating solutions belong to 2-dimensional tori, here tori can
      be higher dimensional. As a result, the projected points
      represented in $\Pi$ are distributed in two clouds of points
      surrounding each recurrent point. A cloud of this kind covers an
      annulus-like region of a certain thickness that is usually
      thicker for stronger couplings. A similar behavior occurs for
      rotational solutions, whose corresponding clouds are distributed
      in strips of certain thickness, see
      \Cref{fig:spin-spin_libr_rot}.

    \begin{figure}[ht]
        \centering
        %\vglue-2cm
%       \scalebox{0.25}{\includegraphics{_fig_sync_1_1_to_3_2.JPG}\includegraphics{_fig_sync_1_2center_to_2_1.JPG}}
        \includegraphics[scale=.63]{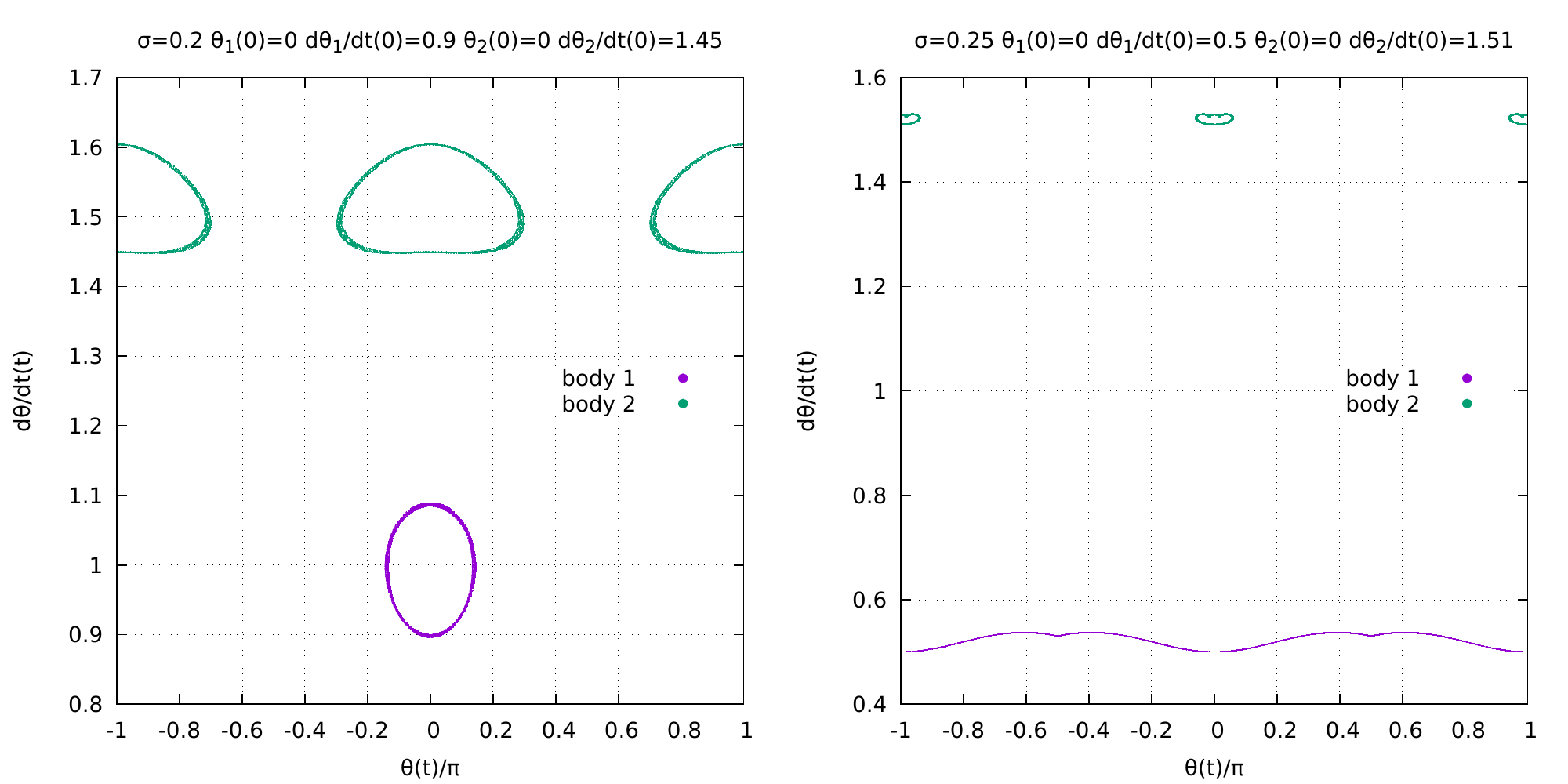}
        \caption{Left: Projections $\Psi_1(2\pi k)$ and $\Psi_2(2\pi
          k)$, $k\in \N$ of a solution $\Psi(2\pi k)$ librating around
          a stable $(1:1,3:2)$ spin-spin resonance. Right: Solution
          for which one of the bodies librates around a stable
          spin-orbit resonance and the other one has a rotational
          behavior.  The common parameter values are $e=0.06$,
          $\lambda _1 = \lambda _2 = 0.05$, and $\hat q _1 = \hat q _2
          = 0.001$.}
        \label{fig:spin-spin_libr_rot}
    \end{figure}

    \item We expect that, for small enough coupling parameters
      $\sigma_j$, the spin-spin resonances are located close to the
      corresponding spin-orbit resonances for each ellipsoid, and
      would keep the same stability as for the uncoupled
      problem. However, we have found that, for larger $\sigma_j$, the
      stability may change with respect to the uncoupled system, see
      \Cref{fig:sync_3_2_to_1_2}.

%   \item \comentario{What happens with chaotic regions?}

    \item The coupled system is 5-dimensional, then, invariant tori,
      if there exist, would not confine solutions in determined
      regions (as in the uncoupled system), but Arnold diffusion is
      expected to take place.
\end{enumerate}

\begin{figure}[ht]
    \centering
    %\vglue-2cm
%   \scalebox{0.25}{\includegraphics{_fig_sync_1_2_to_3_2.JPG}\includegraphics{_fig_sync_1_2center_to_3_2.JPG}}
    \includegraphics[scale=.63]{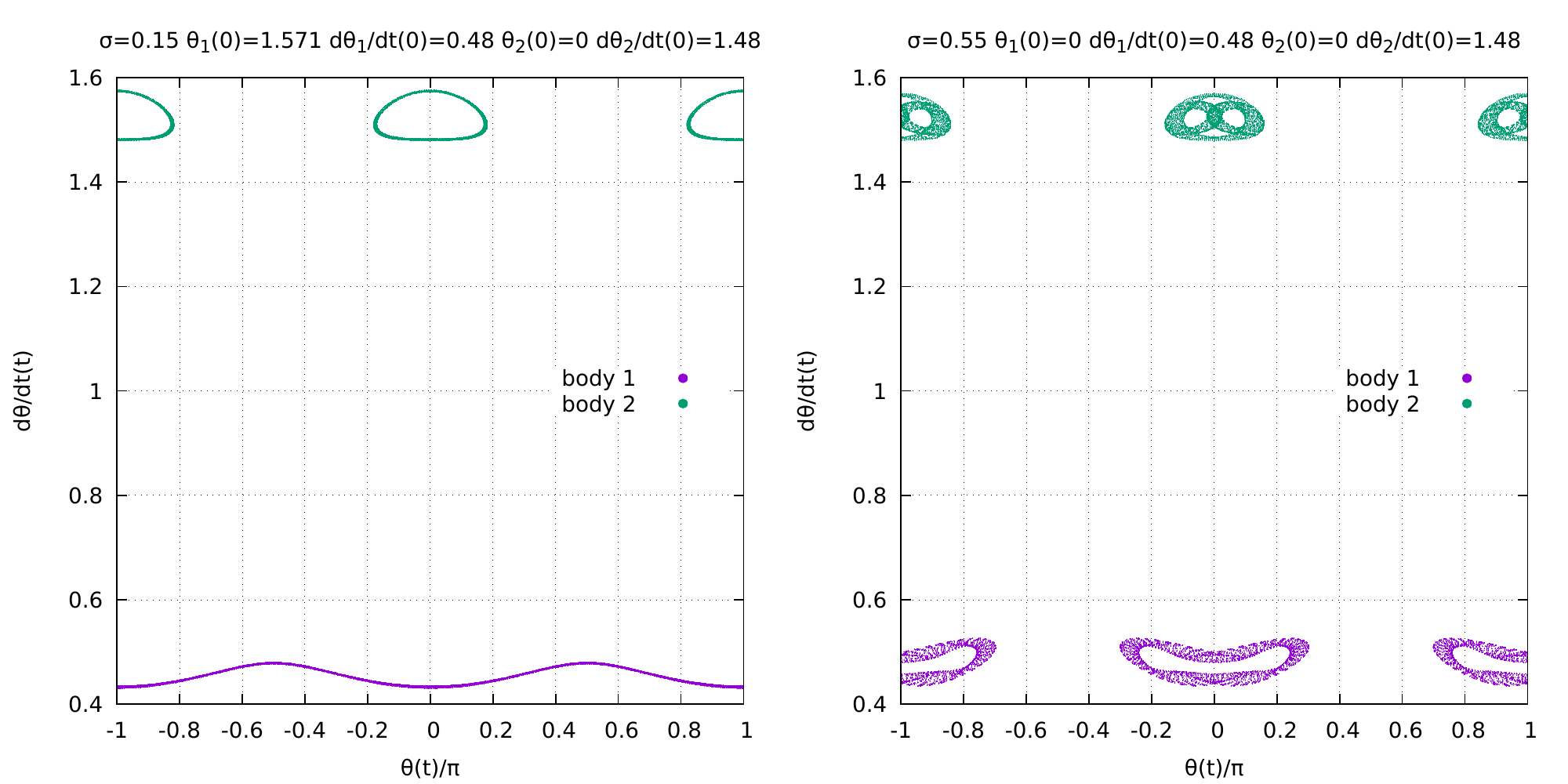}
    \caption{Left: Projections $\Psi_1(2\pi k)$ and $\Psi_2(2\pi k)$,
      $k\in \Z$ of a solution $\Psi(t)$ close to an unstable
      $(1:2,3:2)$ spin-spin resonance. Right: Representation of a
      solution with identical $\Psi(0)$ for larger coupling, now the
      nearby spin-spin resonance has become stable.  The common
      parameter values are $e=0.06$, $\lambda _1 = \lambda _2 = 0.05$,
      and $\hat q _1 = \hat q _2 = 0.001$.}
    \label{fig:sync_3_2_to_1_2}
\end{figure}

\begin{figure}[ht]
\centering
%\vglue-2cm
% \scalebox{0.25}{\includegraphics{_fig_sync_1_1_to_1_1_1.jpg} \includegraphics{_fig_sync_1_1_to_1_1_1.jpg}}\\ \scalebox{0.25}{\includegraphics{_fig_sync_1_1_to_1_1_2.jpg}
% \includegraphics{_fig_sync_1_1_to_1_1_3.jpg}}
\includegraphics[scale=.63]{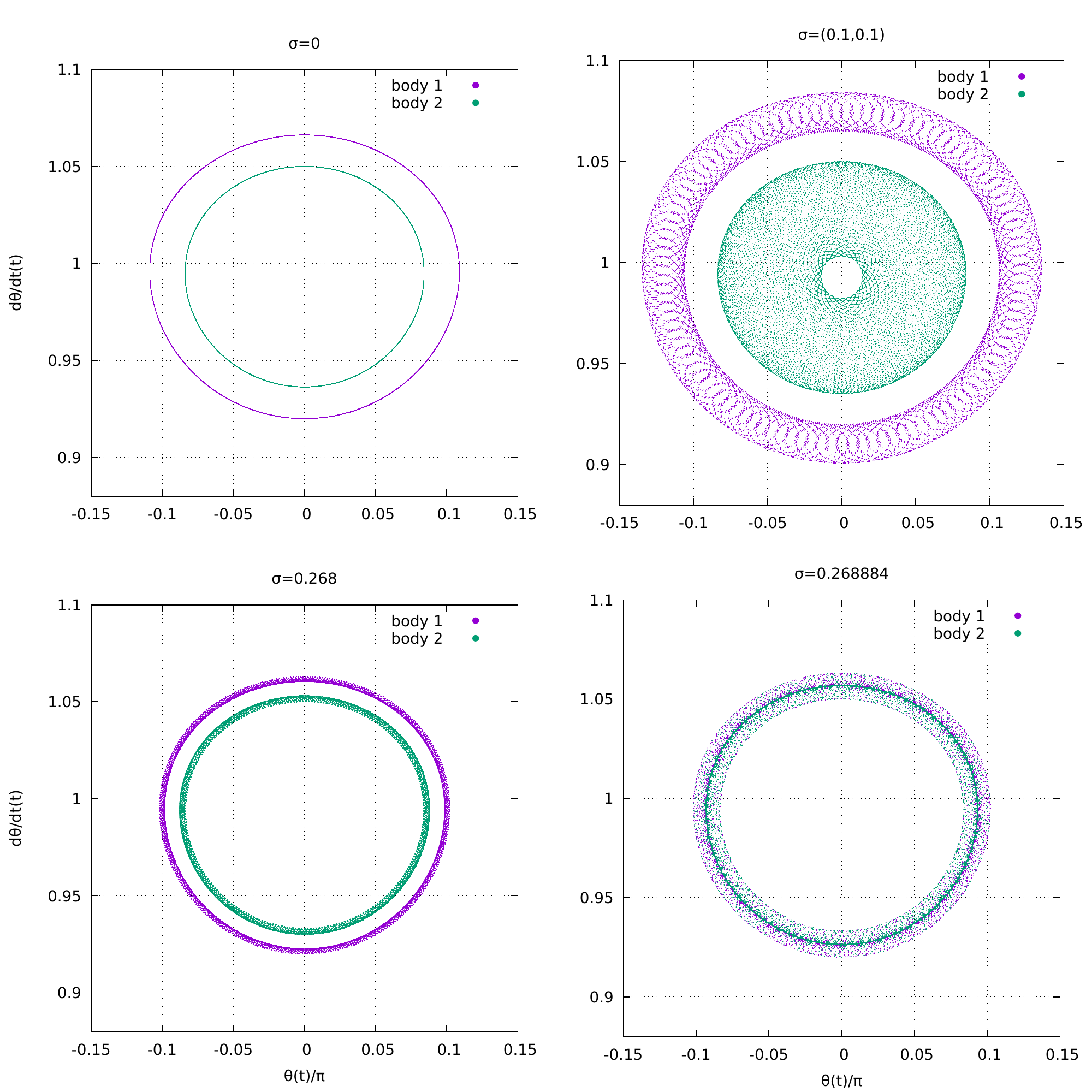}
\caption{Projections $\Psi_1(2\pi k)$ and $\Psi_2(2\pi k)$, $k\in \Z$
  of a solution $\Psi(t)$ close to a stable $(1:1,1:1)$ spin-spin
  resonance for different values of $\sigma _1$ and $\sigma
  _2$. Keeping the same parameters $e=0.06$, $\lambda _1 = \lambda _2
  = 0.05$, $\hat q _1 = \hat q _2 = 0.001$, $\theta _1(0) = \theta
  _2(0) = 0$, $\dot \theta _1(0) = 0.92$ and $\dot \theta _2(0) =
  1.05$.  The external ring (body 1) keeps similar thickness, the
  internal ring changes from a thin one to another that occupies
  values close to (0,0) to thin one to finaly collapse with the
  exterior one.}
% \caption{Measure synchronization: Projections $\Psi_1(2\pi k)$ and $\Psi_2(2\pi k)$, $k\in \Z$ of a solution $\Psi(t)$ close to a stable (1:1,1:1) spin-spin resonance. Keeping the same initial conditions we vary the coupling parameter $\sigma$. Top left: $\sigma=0$. Top right: $\sigma=1\cdot 10^{-2}$. Bottom left: $\sigma=1.01\cdot 10^{-2}$. Bottom right: $\sigma=2\cdot 10^{-2}$. }
\label{fig:sync_1_1_to_1_1}
\end{figure}

\begin{figure}[ht]
\centering
%\vglue-2cm
% \scalebox{0.25}{\includegraphics{_fig_nosync_1_1.JPG}}
\includegraphics[scale=.63]{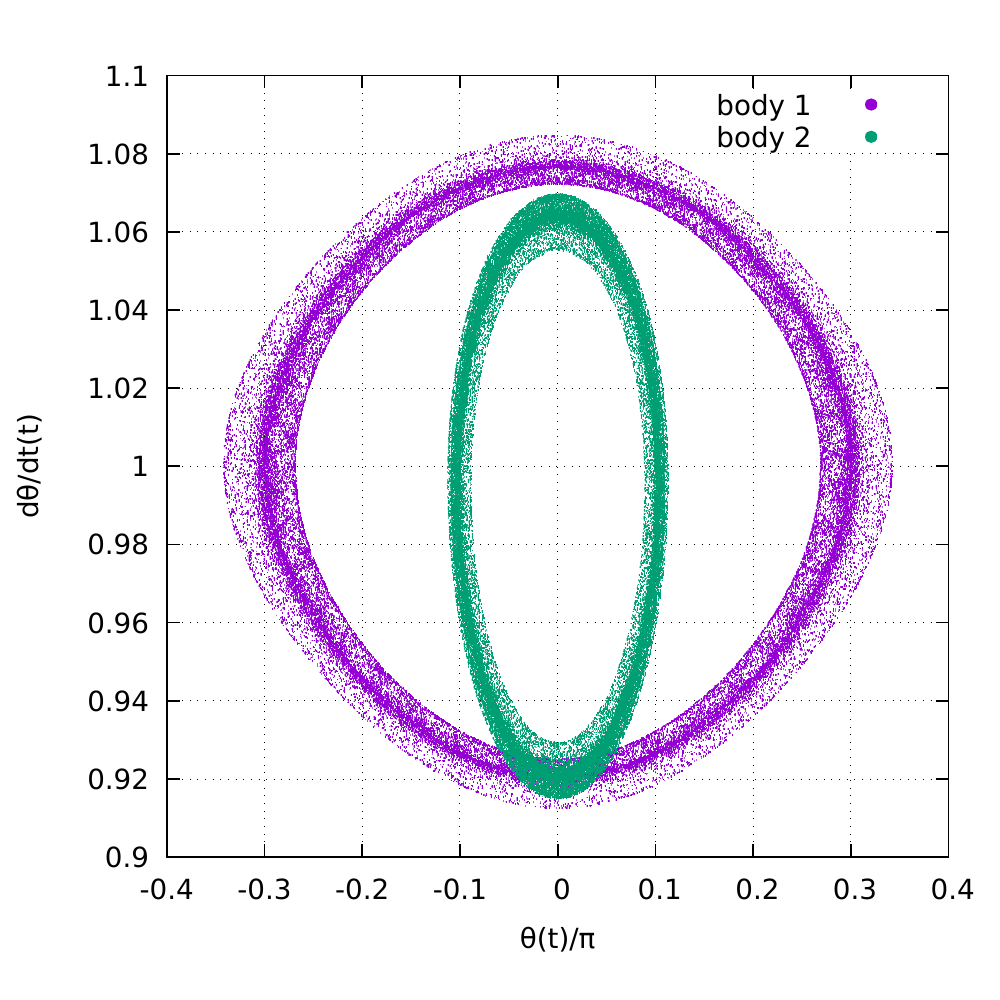}
\caption{Partial overlapping of the domains of $\Psi_1(2\pi k)$ and
  $\Psi_2(2\pi k)$, $k\in \Z$ of a solution $\Psi(t)$ close to a
  stable $(1:1,1:1)$ spin-spin resonance in the case of different
  bodies: no measure synchronization.  The parameter values are
  $e=0.06$, $\lambda _1 = 0.009$, $\lambda _ 2 = 0.05$, $\hat q _1 =
  \hat q _2 = 0.001$, $\sigma _1 = \sigma _2 = 0.3$, $\theta _1(0) =
  \theta _2(0) = 0$, $\dot \theta _1 (0) = 0.92$, and $\dot \theta
  _2(0) = 1.064$.}
\label{fig:nosync}
\end{figure}

A particular behavior occurs only when both bodies are identical
($\mathcal C_1=\mathcal C_2 = 0.5$, $\lambda=\lambda_1=\lambda_2$,
$\sigma=\sigma_1=\sigma_2$ and $\hat q=\hat q_1=\hat q_2$), the
so-called \textit{measure synchronization}. This phenomenon was
observed numerically in \cite{ham1999} for an autonomous Hamiltonian
system of two degrees of freedom (a pair of identical coupled
oscillators): the system librates around a stable periodic solution in
a very particular way described as follows for our system (see the
phenomenon illustrated in \Cref{fig:sync_1_1_to_1_1}). Take a solution
$\Psi(t)$ librating around a stable spin-spin resonance of order
$(m:n,m:n)$. Consider the two families of points $\Psi_1 (2\pi k)$ and
$\Psi_2(2\pi k)$ and their corresponding annulus-like region in the
plane $\Pi$. There are two possibilities: either both clouds of points
are distributed in separated annulus-like regions or both regions
coincide. That is to say, either the overlap is empty or there is a
complete overlap. Moreover, if we start with a solution with separated
regions, we can obtain the complete overlap by increasing the coupling
parameter $\sigma$ (keeping the same $\Psi(0)$). The phenomenon takes
place suddenly for a specific $\sigma=\sigma_*$ when the outer
boundary of the inner ring touches the inner boundary of the outer
ring. At that moment, there is a concentration of density of points in
the contact region.

The phenomenon of measure synchronization disappears when the bodies
are not equal. See in \Cref{fig:nosync} how the domains of both
ellipsoids can overlap without merging into a single ring.

%\subsection{\comentario{Applications/particular cases}}

%As concrete applications, we will consider the pairs Pluto-Charon (a dwarf planet and its largest satellite)
%and Patroclus-Menoetius (a binary asteroid in the triangular Lagrangian equilibrium points around Jupiter).
%For these two pairs, the physical data that will be needed in the next Sections are reported in Table~\ref{tab:real}.
%For Patroclus-Menoetius we will take $a=18.2$ and $e=0.2$.

%\newpage
\section{Linear stability of resonances}\label{sec:linear}

In this section we analyze the stability (in the linear approximation)
of solutions associated to the resonances in different models, namely
the spin-orbit problem (\Cref{sub:lin_spin_orbit}) and the spin-spin
problem with spherical (\Cref{spher}) and non-spherical
(\Cref{nonspher}) companion.

In all cases, we will only deal with balanced resonances
\cref{resSO_bal}, because they appear to be simpler and more relevant
(see \Cref{sec:graph}) than resonances of the general type
\cref{resSO}. Actually, we can establish regions in the space of
parameters where solutions associated to balanced resonances are
unique or have some low multiplicity. In the case of the spin-spin
problem, we restrict ourselves to regions of uniqueness. The study of
linear stability of such periodic solutions in the space of parameters
complements the understanding of the dynamics that we presented in
previous sections, especially \Cref{sec:graph}.

\subsection{Spin-orbit problem ($V_{per}=V_2$)}
\label{sub:lin_spin_orbit}

Consider the spin-orbit problem \cref{spin_orbit_j} with $j=1$, that
is, the motion of the ellipsoid $\mathcal E_1$. Let
$\theta_1=\theta^*(t)$ be a solution in a balanced $m:2$ spin-orbit
resonance, whose associated variational equation is
\begin{equation}\label{spin_orbit_lin}
    \ddot y + \lambda_1 \parentesis{\frac{a}{ r(t;e)}}^{3} \cos(2
    \theta^*(t)-2f(t;e))y=0,\qquad y\in \R .
\end{equation}
For $e\ne0$, \cref{spin_orbit_lin} is a linear equation with a $2\pi
$-periodic coefficient. Particularly, this is a Hill's equation
\cite{mag}.  Assume that $\Phi(t)$ is a matrix solution of
\cref{spin_orbit_lin} with $\Phi(t_0)=\mathds 1_2$, the identity
matrix 2$\times$2. The stability of \cref{spin_orbit_lin} is
determined by the structure of the matrix $M=\Phi(t_0+2\pi )$, called
monodromy matrix. If $|\Tr(M)|<2$, we have elliptic stability, whereas
if $|\Tr(M)|>2$ we have hyperbolic instability. In the parabolic case,
when $|\Tr(M)|=2$, the system is stable if the Jordan canonical form
of $M$ is $\mathds 1_2$ or $-\mathds 1_2$, otherwise the system is
unstable. Actually, if the system is parabolic unstable, the
instability of the linear system is linear in time, but hyperbolic
instability is associated to an exponential divergence in time. In our
case we want to distinguish regions of stability and instability in
the $(e,\lambda_1)$-plane for a given solution, which is continuous in
$(e,\lambda_1)$. From properties of Hill's equations, regions of
elliptic stability are separated from regions of hyperbolic
instability by parabolic curves ($|\Tr(M)|=2$). These curves are made
of unstable points, except if there are intersections of parabolic
curves, because points of intersection become stable. This phenomenon
is called \textit{coexistence}, \cite{mag}.

For a given point $(e,\lambda_1)$ and a given balanced $m:2$
resonance, we want to know how many solutions there are of each type
\cref{Dirichlet_I} or \cref{Dirichlet_II}, and their linear
stability. First, recall \Cref{rem:negative}: solutions of type
$\frac{\pi}{2}$ satisfy conditions of type 0 for the equation
\cref{spin_orbit_j} with $j=1$, taking $-\lambda_1$ instead of
$\lambda_1$. Consequently, for each $(e,\lambda_1)$, with $e\in[0,1)$
  and $\lambda_1\in (-3,3)$, we can obtain all the solutions
  corresponding to a balanced resonance by applying the shooting
  method: take a solution $\theta_1(t)$ with initial
  conditions\footnote{We choose to take initial conditions at $t=\pi$
  and not $t=0$ because the values of $\dot \theta_1(0)$ producing
  spin-orbit resonances for large $e$ and $\lambda_1$ are too large to
  be represented in a 3-dimensional plot as in
  \Cref{fig:lin_stab_spin_orbit}.}  $\theta_1(\pi)=m\pi/2$, $\dot
  \theta_1(\pi)=\gamma\in \mathbb R$ and let $\gamma$ vary until the
  boundary condition $\theta_1(0)=0$ is reached.  Finally, we obtain
  the linear stability of the solution by computing $\Phi(t)$ such
  that $\Phi(\pi)=\mathds 1_2$. Actually, for this procedure, we can
  take any of the boundary conditions in
%   \cref{Dirichlet_I,Dirichlet_II,Dirichlet_Iprime,Dirichlet_IIprime}
  \cref{Dirichlet_I,Dirichlet_II},
  we just need one type to generate all solutions.

The results of this method for the main balanced spin-orbit resonances
are shown in \Cref{fig:lin_stab_spin_orbit}. For these computations we
used a Runge-Kutta Verner 8(9) integrator \cite{Verner78}, instead of
a Taylor integrator \cite{JorbaZ2005}; the reason is that, for some
parameter values, the solutions are constant or polynomials and the
Taylor method suffers in choosing a good step size. Thus,
\Cref{fig:lin_stab_spin_orbit} required around 3.5 days with 34 CPUs
to be generated with a discretization mesh size of $2000\times
2000\times 2000$ for $(e,\lambda_1,\gamma)$.

We can recognize the following characteristics\footnote{The linear
stability of the multiple solutions associated to the resonances $1:1$
and $3:2$ was already studied in \cite{zla,bel1966,bel1975}, but we
include them here in order to have a more complete view.}:

\begin{figure}[ht]
    \centering
    %\vglue-2cm
%   \scalebox{1}{\includegraphics[width=15cm,height=7cm]{_fig_stab_res_spin-orbit.jpg}}
    \includegraphics[scale=.37]{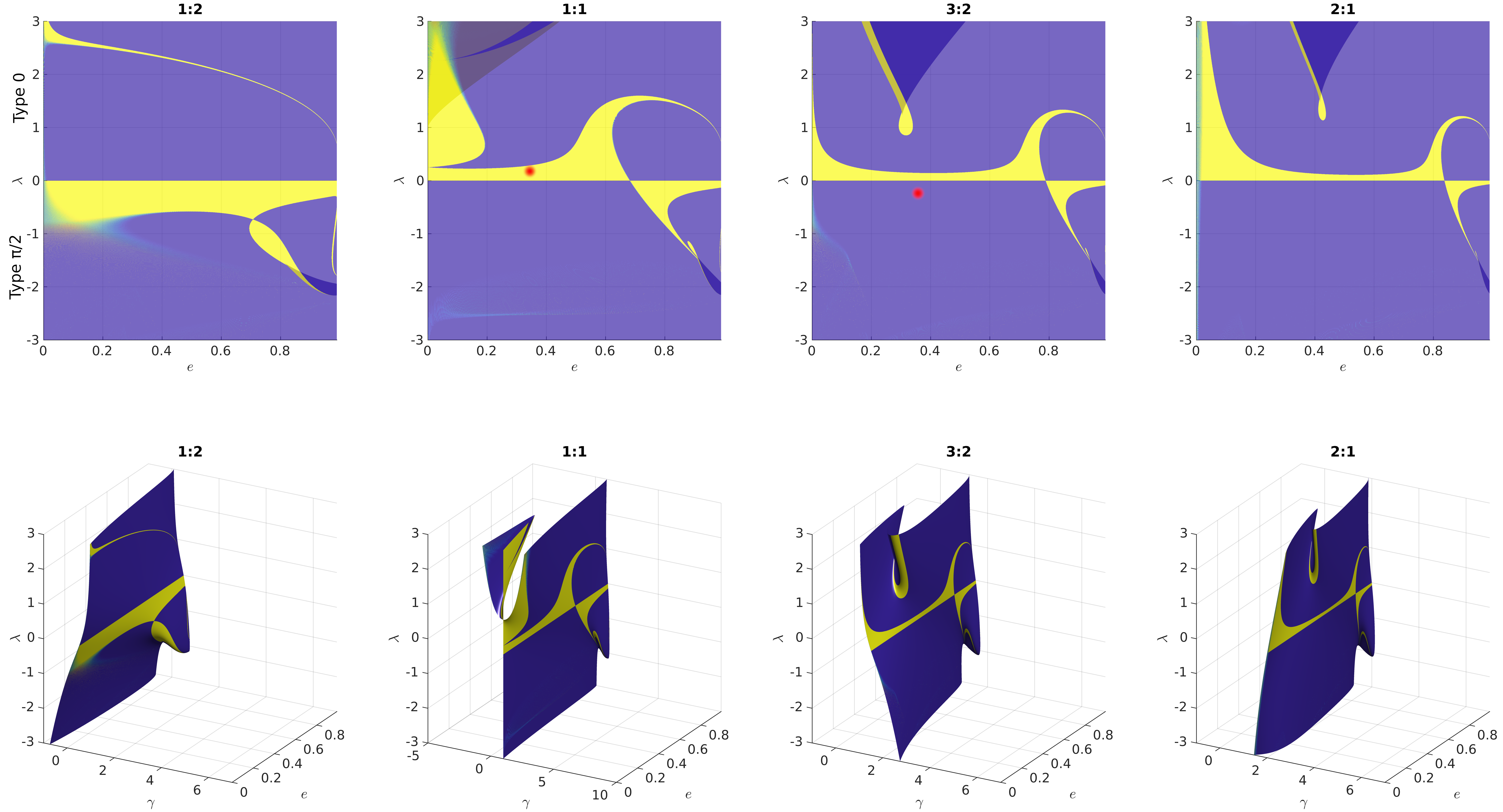}
    \caption{Diagrams of linear stability for the balanced resonances
      of order $1:2$ (left), $1:1$ (middle left), $3:2$ (middle right)
      and $2:1$ (right). Blue-instability and yellow-stability. The
      upper diagrams are projections of the lower diagrams in the
      $(e,\lambda_1)$-plane with some transparency in order to
      identify regions with multiplicity of solutions.}
    \label{fig:lin_stab_spin_orbit}
\end{figure}

\begin{enumerate}
\renewcommand{\labelenumi}{\theenumi)}
    \item Each of the balanced resonances is represented in the
      $(e,\lambda_1,\gamma)$-space by a continuous surface. In the
      case $1:1$, the surface is made of two sheets connected only in
      one point $(e,\lambda_1)=(0,1)$.

    \item The region of uniqueness in the $(e,\lambda_1)$-plane is
      quite large. The multiplicity is generated by bifurcation of
      solutions: the surface folds generating multiple solutions (from
      one to three, as far as we see). Particularly, the bifurcation
      in the case $1:1$ occurs at $(e,\lambda_1)=(0,1)$ producing a
      secondary sheet behind the main one. In general, the
      multiplicity takes place for some regions with
      $|\lambda_1|>1$. The resonances of order $m:2$ with $m>2$, have
      two characteristic folds, one with a V-like shape in solutions
      of type 0 for $\lambda_1>1$, and another small one in solutions
      of type $\frac{\pi}{2}$ for $\lambda_1\sim -2$ and very large
      eccentricities.

    \item Instability is predominant in the diagrams, especially in
      solutions of type 0 for the resonance 1:2 and of type
      $\frac{\pi}{2}$ for the rest of the resonances. We see that the
      main regions of linear stability are continuation of stable
      solutions from $e=0$, much of which are close to small
      $|\lambda_1|$. Except for the resonance 1:2, the stability
      region for large eccentricities ($e>0.6$) of the other
      resonances has a similar shape, characterized by a bifurcation
      with an interchange of stability. It is interesting to note that
      the folds producing the multiplicity are associated to some
      stable regions with peculiar shapes. In the resonance $1:1$ we
      find two unstable regions bifurcating from the exact solution
      $\theta_1(t)=t$ for $e=0$: one at $\lambda_1=1/4=0.25$ (main
      sheet) and the other one at $\lambda_1=9/4=2.25$ (secondary
      sheet).
\end{enumerate}

Now let us consider both bodies. Since the system \cref{spin_orbit_j}
is uncoupled, then, the multiplicity and stability associated to a
spin-spin resonance is given by each of the separated problems. For
example, take the $(1:1,3:2)$ balanced spin-spin resonance of type
$(0,\frac{\pi}{2})$ for $(e,\lambda_1,\lambda_2)=(0.3,0.1,0.5)$, that
is, the red points shown in \Cref{fig:lin_stab_spin_orbit}. It has
associated a unique solution and it is unstable because it is so for
$\theta_2$.

\begin{comment}
    We look for stable spin-orbit resonances, then it is reasonable to
    assume that, at some moment at the pericenter ($f=0$), the
    direction of major elongation of the spinning body points towards
    the other body, that is, $\theta(0)=0$. It can be seen in the
    Poincar\'e maps in \Cref{fig:} that the most stable resonances
    satisfy this condition. Moreover, the equation \cref{spin_orbit}
    has an odd symmetry because it is invariant under the change
    $(t,\theta)\rightarrow(-t,-\theta)$. Consequently, if
    $\theta(0)=0$, then the solution has an odd symmetry
    $\theta(-t)=-\theta(t)$. Morover, we can require more regularity
    in the evolution of $\theta$ and, instead of defining the
    spin-orbit resonance with \cref{resSO}, we can impose instead that

\begin{equation*}
    \theta(t+2\pi)=\theta(t) + \frac{2\pi m}{n}
\end{equation*}

\noindent then, $\theta(2\pi) = \frac{2\pi m}{n}$, but given the
symmetry of the Keplerian ellipse, $r(t)=r(2\pi - t)$ and $f(2\pi -
t)=2\pi-f(t)$, we can impose the condition in half of the period, and
then, the (odd) spin-orbit resonance is given by the Dirichlet
conditions
\begin{equation}\label{Dirichlet}
    \theta(0)=0,\quad \theta(\pi) = \frac{\pi m}{n},
\end{equation}
and its corresponding resonant angle is odd and $2\pi$-periodic.
\end{comment}

\subsection{Spin-spin problem ($V_{per} = V_2+V_4$) with spherical
  companion}\label{spher}

In this case we know that $\mathcal E_2$ is in uniform rotation
$\theta_2(t)=\theta_2(0)+\dot \theta_2(0)t$, while the dynamics of
$\theta_1$ is given by \cref{spin_spin_1}, that depends on
$(e,\lambda_1,\hat q_1,\sigma_1)$. For this problem, we can proceed in
the same way as for the spin-orbit problem of
\Cref{sub:lin_spin_orbit}. On one hand, the variational equation
associated to a solution in a spin-orbit resonance is a Hill's
equation like \cref{spin_orbit_lin}, so the linear stability of the
solution is characterized by the corresponding monodromy matrix. On
the other hand, we can find all the solutions associated to a balanced
spin-orbit resonance using the shooting method for only one type of
boundary conditions by including negative values of $\lambda_1$, see
\Cref{rem:negative}.

Comparing \Cref{fig:lin_stab_spin_orbit_high,fig:lin_stab_spin_orbit}
we can see, for example, how the balanced $3:2$ resonance is perturbed
when we turn on the parameters $(\hat q_1,\sigma_1)$:

\begin{figure}[ht]
    \centering
    %\vglue-2cm
%   \scalebox{1}{\includegraphics[width=15cm,height=7cm]{_fig_stab_res_spin-orbit_high.png}}
    \includegraphics[scale=.37]{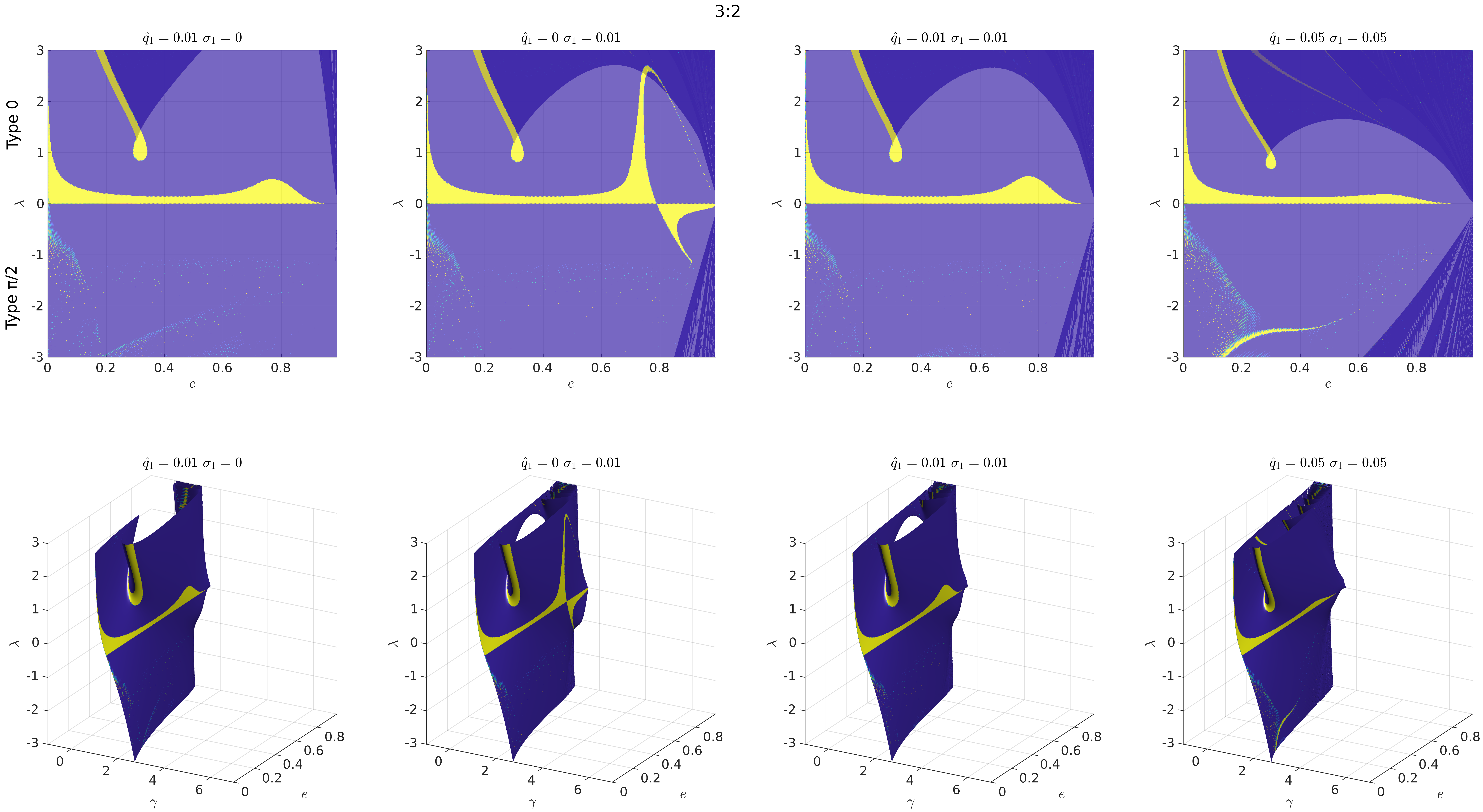}
    \caption{Diagrams of linear stability for the $3:2$ balanced
      resonance of \cref{spin_spin_1} for different values of the
      indicated parameters $(\hat q_1,\sigma_1 )$. Blue denotes
      instability and yellow denotes stability. Each of the columns
      have required around 15h using 24 CPUs and a mesh size of
      $512\times 512\times 512$.}
    \label{fig:lin_stab_spin_orbit_high}
\end{figure}

\begin{enumerate}
\renewcommand{\labelenumi}{\theenumi)}
    \item The effect of the new parameters is remarkable for large $e$
      and $|\lambda_1|$. Especially when $\hat q_1$ and $\sigma_1$ are
      large.

    \item At very large eccentricities, the surface has a complicated
      structure resulting in multiple solutions. The effect of $\hat
      q$ is mainly to alter the stability for large $e$: solutions of
      type $\frac{\pi}{2}$ become always unstable, while stable
      regions of solutions of type 0 are more concentrated. The growth
      of $\hat q$ also increases the multiplicity of solutions of type
      0 and very large $e$. On the other hand, increasing $\sigma_1$
      has a more dramatic effect on the complexity of the surface and
      also modifies the stability for large $e$. Actually, for some
      values of $\sigma_1$, the existing V-shaped fold connects with
      the complex structure of large eccentricities in the upper part
      of the diagram.
\end{enumerate}

\subsection{Spin-spin problem ($V_{per} = V_2+V_4$) with non-spherical companion}
\label{nonspher}

We study the linear stability of the resonances of the full coupled
spin-spin model in \cref{spin_spin_j}. In this case linear stability
is determined by a more general theory and we will restrict ourselves
to zones in the space of parameters where there is uniqueness.

Since we have two degrees of freedom, the first variation at a
particular resonance is not a Hill's equation anymore, but a coupled
system of second order. A Hill's equation is a particular case of
linear Hamiltonian system with periodic coefficients, hereafter LPH
systems.

If we define $z=(\theta_1,\theta_2,p_1,p_2)^{\mathsf{T}}$, where
$p_j=\mathcal C_j \dot \theta_j$, the spin-spin problem in the
Hamiltonian form \cref{spin_spin_ham} can be written as
\begin{equation}\label{SS_Ham_sys}
    \dot z = J_2 \partial_z  H_K(t,z),
\end{equation}
where $J_l$ is the square matrix of order $2l$ given by
\begin{equation}\label{J_l}
    J_l = \matdos{0}{\mathds 1_l}{-\mathds 1_l}{0},\quad l=1,2,\dotsc
\end{equation}
with $\mathds 1_l$ the unit matrix of order $l$. The non-autonomous
Hamiltonian $H_K(t,z)$ is given in \cref{H_K} for
$V_{per}=V_2+V_4$. Let $z=z^*(t)$ be a solution of \cref{SS_Ham_sys}
that is in a balanced spin-spin resonance of type
$(m_1:2,m_2:2)$. Then, the first variation at such solution has the
form
\begin{equation}\label{SS_Ham_sys_lin}
    \dot y = J_2 \partial_{z,z}  H_K(t,z^*(t)) y, \quad y\in \R^4,
\end{equation}
where $\partial_{z,z} H_K$ denotes the Hessian matrix of the
Hamiltonian $H_K$ in the 4-dimensional variable $z$. The system
\cref{SS_Ham_sys_lin} is an LPH system of period $2\pi$. Assume that
$\Phi(t)$ is a matrix solution of \cref{SS_Ham_sys_lin} with
$\Phi(t_0)=\mathds 1_4$. The stability of \cref{SS_Ham_sys_lin} is
determined by the Floquet multipliers, that are the eigenvalues of the
monodromy matrix $\Phi(t_0+2\pi)$.

An LPH system has particular stability properties, see the general
theory in \cite{yak1975, eke} and an application to the double
synchronous spin-spin resonance in \cite{mis2021}. For example, assume
that $\varphi \in \C$ is a Floquet multiplier of an LPH system. Then,
its inverse $\varphi^{-1}$, its complex conjugates $\bar \varphi$ and
$\bar\varphi^{-1}$ are also multipliers and have the same multiplicity
as $\varphi$. That is, the Floquet multipliers have a symmetric
distribution with respect to the real line and the unit circle of the
complex plane. In consequence, a necessary condition for stability is
that all multipliers have modulus 1.  Moreover, if all multipliers
have modulus 1 and they are all different, then the system is
stable. When all the multipliers have modulus 1 and some of them
coincide, the situation is not trivial and the stability depends on
further algebraic properties of the multipliers (Krein's theory,
\cite{Krein}). Then, unlike for Hill's equations, here we do not have
a quantity like the trace of the monodromy matrix in order to
characterize the boundary of stability/instability regions. Instead,
we will use the following definition.

\begin{definition}\label{def:hyperbolic}
    We will say that an LPH system is \textit{hyperbolic unstable} if
    \begin{equation*}
        \max_{k} |\varphi_k|>1,
    \end{equation*}
    where $\varphi_k$, $k=1,2,\dotsc$, are all the Floquet multipliers
    of the system.
\end{definition}

Assume that the solution $z^*(t)$ is continuous in some domain of the
parameters of the model $(e;\mathcal
C_1,\lambda_1,\lambda_2,\sigma_1,\hat q_1,\hat q_2)$.  The equation
$\displaystyle \max_{k=1,\dotsc,4} |\varphi_k|=1+\varepsilon$, with
$\varepsilon>0$, defines a 1-parametric family of hyperbolic unstable
manifolds in the space of parameters; then, the boundary of hyperbolic
instability will be found if we take the limit of such manifolds as
$\varepsilon\rightarrow 0$.

On the other hand, it is possible to find all the solutions of a given
type of a balanced spin-spin resonance using the shooting method as in
the previous section. However, since the phase space and the space of
parameters have large dimensions, our approach will be to obtain the
solutions by continuation. Note that the solutions of
\cref{spin_spin_j} for $\lambda_j=0$ and any $e\in [0,1)$ are exactly
  given by $\theta_j(t)=\theta_j(0)+\dot \theta_j(0)t$. Then, the
  unique solution of type $(\beta_1,\beta_2)$ of a balanced spin-spin
  resonance $(m_1:2,m_2:2)$ is just
  $\theta_j(t)=\beta_j+\frac{m_j}{2}t$. Such solution can be continued
  for $|\lambda_j|>0$. For small enough $|\lambda_j|$, the systems in
  \cref{spin_orbit_j,spin_spin_j,spin_spin_1} can be regarded as
  different perturbations of the system $\ddot \theta_j=0$, then,
  \Cref{fig:lin_stab_spin_orbit,fig:lin_stab_spin_orbit_high} give us
  a quite clear idea of a region of uniqueness of a given type
  associated to a balanced spin-spin resonance of
  \cref{spin_spin_j}. Moreover, such solution can be found by
  continuation in the space of parameters.

\begin{figure}[ht]
    \centering
    %\vglue-2cm
%   \scalebox{1}{\includegraphics[width=15cm,height=7cm]{_fig_stab_res_spin-spin_sigma.png}}
    \includegraphics[scale=.35]{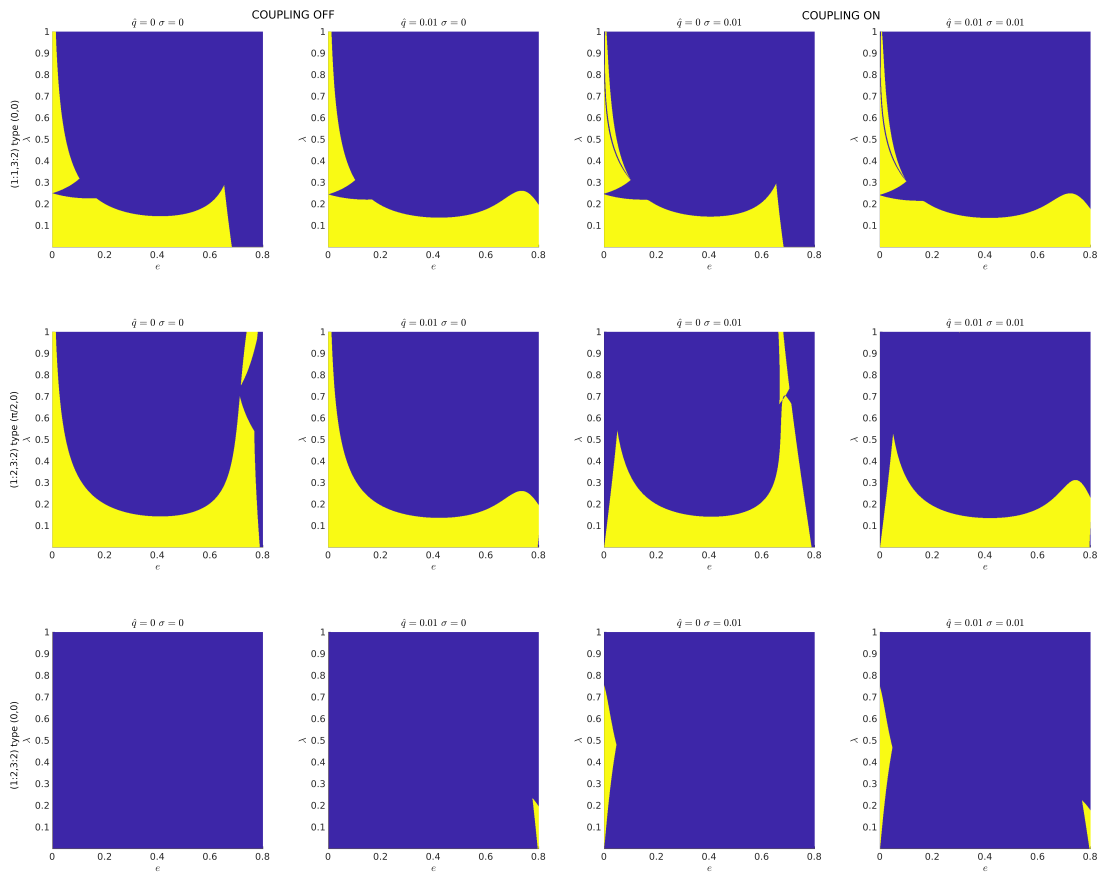}
    \caption{Regions of (hyperbolic) linear instability of solutions
      associated to different spin-spin resonances of the problem
      \cref{spin_spin_j} (case: equal bodies) for different values of
      the parameters $(\hat q,\sigma)$.  Blue denotes instability and
      yellow denotes stability. Each plot has required around 10
      minutes using 15 CPUs and a mesh size $750 \times 750$.}
    \label{fig:lin_stab_spin_spin_equal_same}
\end{figure}

\subsubsection*{Case of equal bodies $\mathcal E_1=\mathcal E_2$}
In this case, the equation \cref{spin_spin_j} depends on four
parameters $(e,\lambda,\hat q,\sigma)$, where
$\lambda=\lambda_1=\lambda_2$, $\hat q=\hat q_1=\hat q_2$ and
$\sigma=\sigma_1=\sigma_2$. From \Cref{fig:lin_stab_spin_orbit_high},
it looks that a good choice for doing the continuation is in the range
$\ 0\le e\lesssim 0.8$, $ 0\le \lambda\lesssim 1$ and $0\le \hat q,
\sigma \lesssim 0.01$. In this range, the linear stability of the
solution of type $(0,0)$ associated to the resonance of order
$(1:1,1:1)$ was investigated in
\cite{mis2021}. \Cref{fig:lin_stab_spin_spin_equal_same} shows the
stability diagrams of the resonance $(1:1,3:2)$ of type $(0,0)$ and
the resonance $(1:2,3:2)$ of type $(\frac{\pi}{2},0)$ and $(0,0)$ for
different $(\hat q,\sigma)$. Let us point out some properties:

\begin{enumerate}
\renewcommand{\labelenumi}{\theenumi)}
 \item Note that a plot with $(\hat q,\sigma)=(0,0)$ is just the
   superposition of diagrams in \Cref{fig:lin_stab_spin_orbit}, while
   a plot with $(\hat q,\sigma)=(0.01,0)$ is the superposition of
   diagrams similar to those in
   \Cref{fig:lin_stab_spin_orbit_high}. Then, the effect of the
   coupling can be seen in the plots with $\sigma\ne 0$.

 \item The effect of the coupling is different in each case: for the
   resonance $(1:2,3:2)$ of type $(0,0)$, we only see an additional
   thin unstable region for $e<0.1$ and $0.25<\lambda<1$. For the
   resonance $(1:2,3:2)$ of type $(\frac{\pi}{2},0)$, we see that the
   region with small $e$ becomes unstable, whereas for large $e$ the
   stability is somewhat altered. Finally, without coupling, the
   resonance $(1:2,3:2)$ of type $(0,0)$ is unstable for almost all
   the points, but the coupling introduces the stability for small
   $e$. Note that this in agreement with \Cref{fig:sync_3_2_to_1_2}.
\end{enumerate}

\begin{comment}
{\color{green}\textbf{NO PLOTS}\subsubsection{Case of double size $\mathcal E_1=2\mathcal E_2$}

We consider the case in which $\mathcal E_1$ is twice larger than
$\mathcal E_2$ assuming the same density. Now the equation
\cref{spin_spin_j} depends on four free parameters $(e,\lambda_2,\hat
q_1,\sigma_1)$, whereas the dependent parameters are
$\lambda_1=2^{-3}\lambda_2$, $\hat q_2=2^{-2}\hat q_1$
and\footnote{This is the correct relation, in \cite{mis2021} it is
incorrectly stated that $\hat q_2=2^{-5}\hat q_1$.}
$\sigma_2=2^{-5}\sigma_1$. Like in the previous case, we can make the
continuation in the range $\ 0\le e\lesssim 0.8$, $ 0\le
\lambda_2\lesssim 1$ and $0\le \hat q_1, \sigma_1 \lesssim 0.01$.}

\end{comment}

\section{Comparison between the full and the Keplerian models}
\label{sec:comparison}

In this section, we provide some results on the comparison between the
full and Keplerian problems (Section~\ref{sub:Ham}), and we give some
numerical results on the interaction between the spin and orbital
motions (Section~\ref{sec:quantitative}).

\subsection{Hamiltonian approach}\label{sub:Ham}

It will be useful to write the dynamical equations of the Hamiltonian
of the full model \cref{H_theta} in the compact form
\begin{equation}\label{full_Ham_sys}
    \dot z = J_4 \partial_z  H(z),
\end{equation}
where $z=(r, f, \theta_1, \theta_2, p_r, p_f, p_1, p_2)^{\mathsf T}$
and $J_4$ is defined by \cref{J_l}.

Let us now formulate the Keplerian models (spin-orbit and spin-spin,
see \Cref{sec:Kepler}) as perturbations of the full model, so we can
compare both families of models. Consider a function $\zeta(t)$ that
satisfies the equation
\begin{equation}\label{per_Ham_sys}
    \dot \zeta(t) = J_4 \partial_z H(\zeta(t)) - h(\zeta(t))\ ,
\end{equation}
where the vector function
\begin{equation*}
  h(z)=(0, 0, 0, 0, -\partial_r V_{per}(z), -\partial_f V_{per}(z), 0,
  0)^{\mathsf T}
\end{equation*}
is responsible of subtracting the perturbative part of the potential
(see equation \cref{V0per}) only in the equations for $\dot p_r$ and
$\dot p_f$. Thus, equation \cref{per_Ham_sys} represents the Keplerian
models including the orbital part: the spin-orbit model corresponds to
\cref{per_Ham_sys} with $V_{per}=V_2$ and the spin-spin model with
$V_{per}= V_2+V_4$. The corresponding equations of motion are
\cref{orbital_eqs_kep} and \cref{spin-spin_V_trun}, so we can write
the solution in the form
\begin{equation*}
    \zeta(t) = (r(t;a,e), f(t;e), \theta_1(t), \theta_2(t),
    p_r(t;a,e), p_f(a,e), p_1(t), p_2(t)),
\end{equation*}
where $a$ and $e$ are the semimajor axis and the eccentricity of the
Keplerian orbit, see \cref{kepler_orbit}. Now it is clear that the
Keplerian model \cref{per_Ham_sys} is not Hamiltonian, even though we
can split it into one autonomous Hamiltonian system (orbital part with
$V=V_0$) and another non-autonomous one (spin part with
$V=V_0+V_{per}$). Moreover, unlike for the full model, in a Keplerian
model neither the total angular momentum $P_f=p_f+p_1+p_2$ is
conserved\footnote{Instead, the Keplerian assumption results in the
conservation of the orbital angular momentum $p_f(t)=\mu r(t;a,e)^2
\dot f(t;e)=\mu a^2 \sqrt{1-e^2}$.}, since we can compute that
\begin{equation*}
    \dot P_f (t) = -\sum_{j=1}^2\partial_{\theta_j}
    V_{per}(r(t;a,e),f(t;e),\theta_1(t),\theta_2(t)).
\end{equation*}
We want to know if $\zeta(t)$ is a good approximation for a solution
of the full problem \cref{full_Ham_sys} with the same initial
conditions. Consider the solution $z=z(t)=\zeta(t)-\delta z(t)$ of
\cref{full_Ham_sys} such that $z(0)=\zeta(0)$. Then, expanding the
equation \cref{full_Ham_sys} up to first order in $\delta z$, we
obtain that the function $\delta z =\delta z(t)$ satisfies the
equation
\begin{equation}\label{ec_delta_z}
    \frac{\d }{\d t} \delta z = J_4 \partial^2_{z,z} H (\zeta(t))
    \delta z - h(\zeta(t))
\end{equation}
with initial condition $\delta z(0)=0$. In equation \cref{ec_delta_z},
$\partial^2_{z,z} H(z)$ is the Hessian matrix associated to the
Hamiltonian in \cref{H_theta}. If the system \cref{ec_delta_z} is
stable, then the norm $||\zeta(t)-z(t)||$ is bounded. Additionally, it
is easy to see that the system \cref{ec_delta_z} is Lyapunov stable if
and only if the trivial solution of the homogeneous part
\begin{equation}\label{hom_linear}
    \dot y = J_4 \partial^2_{z,z} H (\zeta(t))  y, \quad y\in \R^8,
\end{equation}
\begin{comment}
Note that the Hamiltonian $H(z)$ in \cref{H_theta} and $h(z)$ are both
analytic functions. Consequently, $\zeta(t)$ is also analytic, because
it satisfies equation \cref{per_Ham_sys}.
\end{comment}

\noindent is Lyapunov stable. A general form of $\zeta(t)$ is unknown
because, although the orbital part is given by the classical Kepler
problem, the spin part is given by a non-autonomous periodic system of
nonlinear equations. However, $\zeta(t)$ is a periodic solution at
spin-spin resonances, with period $2\pi$ for balanced resonances. In
such cases, equation \cref{hom_linear} is an LPH system, some of whose
properties were mentioned in \Cref{nonspher}.

At this point, we wonder if it is possible for the periodic system
\cref{hom_linear} to be stable. Let us point out an argument
supporting a negative answer, even for stable spin-spin resonances. It
is known that the periodic solutions of the planar Kepler problem are
not linearly stable. This can be easily checked using Poincar\'e
variables (action-angle variables): we obtain a positive eigenvalue
for a linear system of constant coefficients. See further related
discussions in \cite{bosort2016,sch1972}.  We can see that 1 is the
only associated Floquet multiplier and has multiplicity four; then,
the instability of the periodic solutions of the Kepler problem is not
hyperbolic, according to \Cref{def:hyperbolic}, but rather of
parabolic kind. From this discussion, we expect that $\zeta(t)$ and
$z(t)$ are divergent functions, but we want to know if such divergence
is exponential in time, that is, when \cref{hom_linear} is hyperbolic
unstable.

Suppose that the function $\zeta(t)$ is continuous on a domain of the
space of parameters $(a,e;\mathcal C_1, \lambda_1, \lambda_2,
\sigma_1,\hat q_1,\hat q_2)$. Note that we add $a$ to the parameters
of the spin-spin model because it varies with the orbital initial
conditions. On the other hand, the Hamiltonian $H$ depends on the
parameters of the full model, we take the independent set
$(\mu,\mathcal C_1,d_1,d_2,q_1,q_2)$, where the value of $\mu$ informs
us about the disparity in the masses of the bodies because $\mu=M_1
M_2=M_1(1-M_1)$. We can obtain $(\mu,\mathcal C_1,d_1,d_2,q_1,q_2)$
from $(a,e;\mathcal C_1, \lambda_1, \lambda_2, \sigma_1,\hat q_1,\hat
q_2)$ as follows: First, from \cref{lambda_j}, we see that, assuming
$\sigma_1>0$,
\begin{equation}
  \label{eq.sigma1-and-a}
  \mu = \frac{\mathcal C_1}{3 \sigma_1 a^2}\ ;
\end{equation}
from this value we can obtain $M_1$ because $\mu =M_1(1-M_1) $ with
$M_1\in (0,1)$. This is enough to write $M_2=1-M_1$, $\mathcal
C_2=1-\mathcal C_1$ and, from the definitions \cref{lambda_j}, it
holds that
\begin{equation*}
  d_1=\frac{\lambda_1 \mathcal C_1}{3M_2},\quad
  d_2=\frac{\lambda_2 \mathcal C_2}{3M_1},\quad
  q_1=\hat q_1M_1a^2,\quad
  q_2=\hat q_2M_2a^2\ .
\end{equation*}

\noindent From these relations, it is obvious that, in order to
compare the full and the Keplerian models, the spin-orbit versions are
not enough, and we need to take $V_{per}=V_2+V_4$, say, the spin-spin
models.

Additionally, the initial conditions associated to $\zeta(t)$ are
given by \cref{kepler_in}, for the orbital part, and the values
$\theta_j(0),\dot \theta_j(0)$ are such that the spin part satisfies
the boundary conditions for the spin-spin problem in a spin-spin
resonance of a certain type as in \cref{Dirichlet_balSS}. Recall also
that, in our units, the Keplerian orbital period is $T=2\pi$ and
$G=a^3$.

As in \Cref{nonspher}, we can find the region (in the parameters
space) of hyperbolic instability and its boundary, that is given by
the limit as $\varepsilon\rightarrow 0$ of the manifolds that satisfy
$\displaystyle \max_{k=1,\dotsc,8} |\varphi_k|=1+\varepsilon$, where
$\varphi_k$ are the Floquet multipliers of \cref{hom_linear}. It is
important that we focus on a region of the parameters where the
corresponding solution associated to a spin-spin resonance is linearly
stable, so we can see what is the effect of putting the orbital and
the spin parts altogether. Actually, not even the spin part of
\cref{hom_linear} is completely equivalent to \cref{SS_Ham_sys_lin},
because all the derivatives of $V_{per}(r,f,\theta_1,\theta_2)$,
evaluated at $\zeta(t)$, appear in \cref{hom_linear} for both orbital
and spin variables.

\begin{figure}[ht]
    \centering
    %\vglue-2cm
    %     \scalebox{1}{\includegraphics[width=15cm,height=7cm]{_fig_stab_res_spin-spin2.png}}
    \scalebox{1}{\includegraphics[scale=.34]{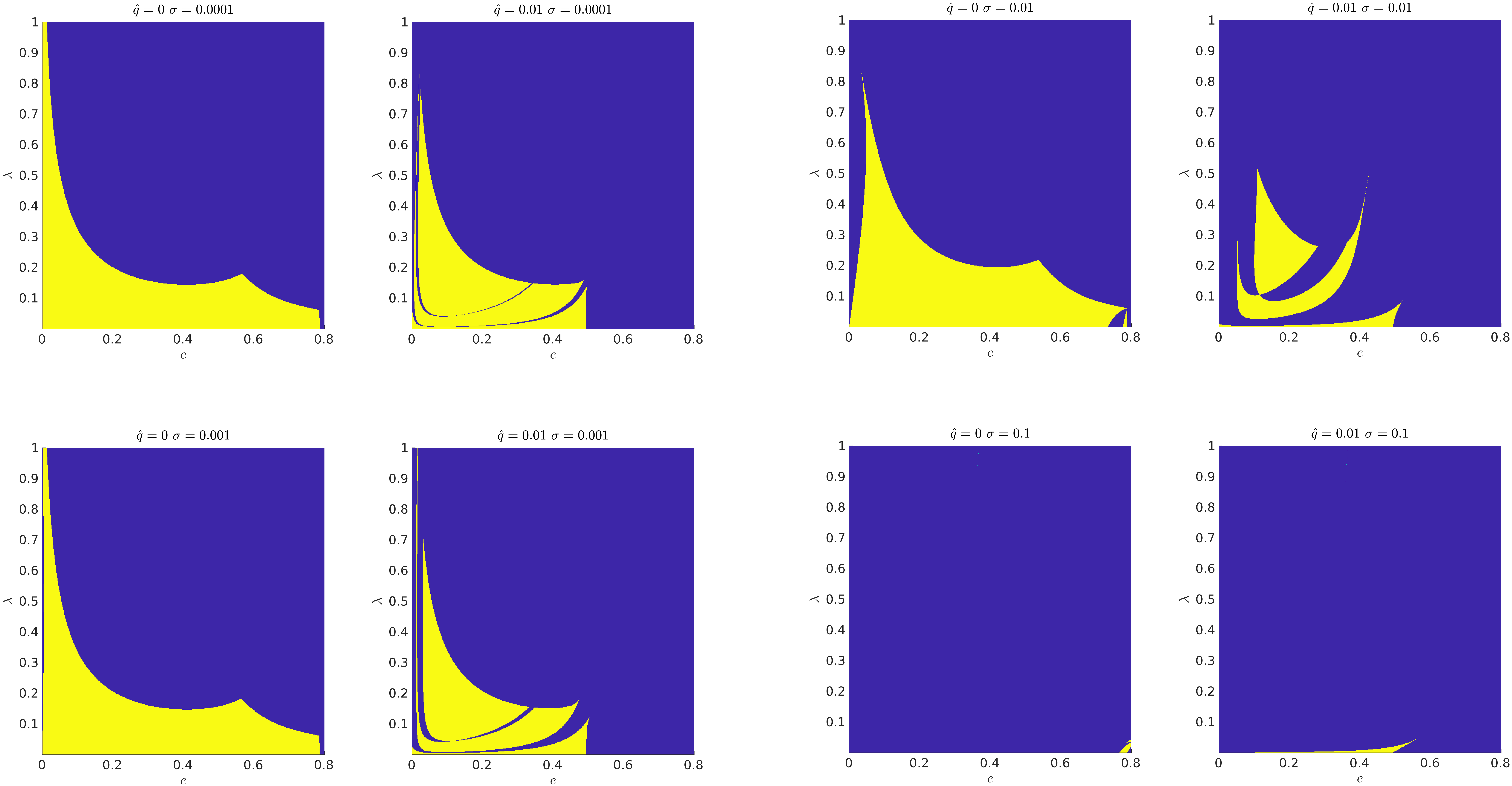}}
    \caption{Regions of (hyperbolic) linear instability of
          $(1:2,3:2)$ of type $(\pi /2, 0)$ of the equation
          \Cref{hom_linear} for equal bodies and different values of
          the parameters. Blue denotes instability and yellow denotes
          stability. Each plot has required around 10 minutes using 15
          CPUs and a mesh size of $750\times 750$.}
    \label{fig:Ham_comp}
\end{figure}

\Cref{fig:Ham_comp} shows the regions of hyperbolic instability of
\cref{hom_linear} in the case of equal bodies for a particular
spin-spin resonance with different values of the parameters. We
observe the following:

\begin{enumerate}
  \item We can compare \Cref{fig:Ham_comp} with
    \Cref{fig:lin_stab_spin_spin_equal_same} for similar values of
    $\hat q$ and $\sigma$. In the case of equal bodies ($\mu=0.25$ and
    $\mathcal{C}_1 = 0.5$), the value of $a$ is determined by
    \cref{eq.sigma1-and-a}: smaller $\sigma$ also implies further
    bodies. Then, the plots with $\sigma=0.0001$ in
    \Cref{fig:Ham_comp} and be compared approximately with those with
    $\sigma=0$ in \Cref{fig:lin_stab_spin_spin_equal_same}.

  \item From the comparison, we see that the plots in
    \Cref{fig:Ham_comp} with \Cref{fig:lin_stab_spin_spin_equal_same}
    with $\hat q=0$ are similar for small eccentricities, but, for
    large $e$, the unstable regions of plots in \Cref{fig:Ham_comp}
    are larger. On the other hand, the plots in \Cref{fig:Ham_comp}
    with $\hat q=0.01$ include unstable stripes invading the stable
    regions for small $e$ (they grow with $\sigma$), whereas for large
    $e$ the diagrams become totally unstable.

  \item As we increase $\sigma$ (closer bodies), the plots become more
    and more unstable.
\end{enumerate}

We conclude that the spin-spin model should be a reliable
simplification of the full model in the regions of the parameters with
small $e$, large $a$ and close to regions with stable spin-spin
resonances. We remark also that instability of spin-spin resonances
not necessarily implies hyperbolic instability of
\cref{SS_Ham_sys_lin}: there are some regions for which the Floquet
multipliers of the spin-spin resonance are not too far from the unit
circle, so that when we consider the corresponding system
\cref{SS_Ham_sys_lin}, all its Floquet multipliers belong to the unit
circle. It is also remarkable that, although the comparison makes
sense only between the full and Keplerian spin-spin models, we obtain
almost identical plots as in \Cref{fig:Ham_comp} independently if we
use $V_{per}=V_2$ or $V_{per}=V_2+V_4$ in the Hamiltonian in
\cref{hom_linear}.

\subsection{Quantitative numerical approach}
\label{sec:quantitative}

In this section, we want to investigate, from a numerical point of
view, two questions that the Hamiltonian approach left unanswered:
(Q1) Can we quantify the influence of the spin motion on the orbital
one?  (Q2) Is it possible that the bodies end up colliding, even
though \Cref{hom_linear} is not hyperbolic unstable?

In \Cref{sub:Ham}, we saw that the solution of a Keplerian model
$\zeta(t)$ corresponding to a spin-spin resonance should diverge from
a solution of the full model $z(t)$ with identical initial
conditions. In the region of parameters of hyperbolic instability the
divergence should be exponential; however, in the rest of the
parametric space, we want to quantify how different both motions
are. So, let us take the point $(a,e;\mathcal C_1, \lambda_1,
\lambda_2, \sigma_1,\hat q_1,\hat q_2)$ and compute the functions
$\zeta(t)$ and $z(t)$ in order to compare them.

The orbital motion of $\zeta(t)$ is characterized unambiguously by the
set of Keplerian elements $(a,e,\omega)$, where $\omega=0$ is the
argument of the periapsis, together with $t$, the mean anomaly. On the
other hand, let the orbital part of the solution $z(t)$ of the full
model be given by $(r_F(t),f_F(t))$. Then, we can transform the
orbital position $\vec r_F(t)=r_F(t) \exp(if_F(t))$ to the osculating
Keplerian elements $(a_F(t),e_F(t),\omega_F(t))$ of the two-body
problem using the following expressions. From the geometrical identity
$|\dot {\vec r} _F|^2 = G \parentesis{\frac{2}{r_F}-\frac{1}{a_F}}$,
we obtain
\begin{equation*}
  a_F(t) =\parentesis{\frac{2}{r_F} - \frac{\dot r_F^2+\dot
      f_F^2r_F^2}{G}}^{-1}\ .
\end{equation*}
Now let us define the orbital angular momentum per unit mass $\vec h_F
= \vec r_F \wedge \dot {\vec r}_F$, where $\wedge$ is the vector
product, and the eccentricity vector
\begin{equation*}
 \vec e_F(t) = \frac{ \dot {\vec r}_F \wedge \vec h_F}{G} -
    \frac{\vec r_F}{r_F}\ ,
\end{equation*}
whose modulus is given by
\begin{equation*}
 e _F(t) = \sqrt{1-\frac{r_F^4 \dot f_F^2}{Ga_F}}\ .
\end{equation*}
Whenever $e_F\ne0$, we can define $\omega_F\in[-\pi,\pi)$ as the polar
  angle of $\vec e_F$. The mean anomaly associated to the full model
  can be defined too, but we will not use it in our study. The
  balanced spin-spin resonance of order $(m_1:2,m_2:2)$ in the
  function $\zeta(t)$ is characterized by the modified resonant angles
  $\psi_j^{m_j:2}(t)=m_j f(t,e)-2\theta_j(t)$. Seemingly, the spin
  part of the solution $z(t)$ of the full model is given by
  $(\theta_{1,F}(t),\theta_{2,F}(t))$, so, in order to compare with
  $\zeta(t)$, let us define $\psi_{j,F}^{m_j:2}(t)=m_j
  f_F(t)-2\theta_{j,F}(t)$. Now define the functions
\begin{equation}
\label{deltas}
    \delta_a (t)= \frac{a_F(t)-a}{a},\quad \delta_e (t) = e_F(t)-e,
    \quad \delta_{res, j}(t) =\psi_{j}^{m_j:2}(t)-
    \psi_{j,F}^{m_j:2}(t)\ ,
\end{equation}
where $\delta_a$ is the relative deviation in semi-major axis, while
$\delta_e$ and $\delta_{res,j}$ are the absolute deviations in
eccentricity and resonant angles, respectively.

Finally, recall from \cref{a_j} that we have an expression for
$\mathsf a_j$ in terms of the parameters of the model. Then, for our
purpose, we will say that there is a collision if $r_F(t)\le \mathsf
a_1+\mathsf a_2$. Now we are in a position to compare the solutions of
the full model with respect with those of the Keplerian models.
%\newpage

\begin{figure}%[ht]
    \centering
    %\vglue-2cm
%     \scalebox{0.3}{\includegraphics{_fig_int_100_revolutions.pdf}}
    \scalebox{1}{\includegraphics[scale=.45]{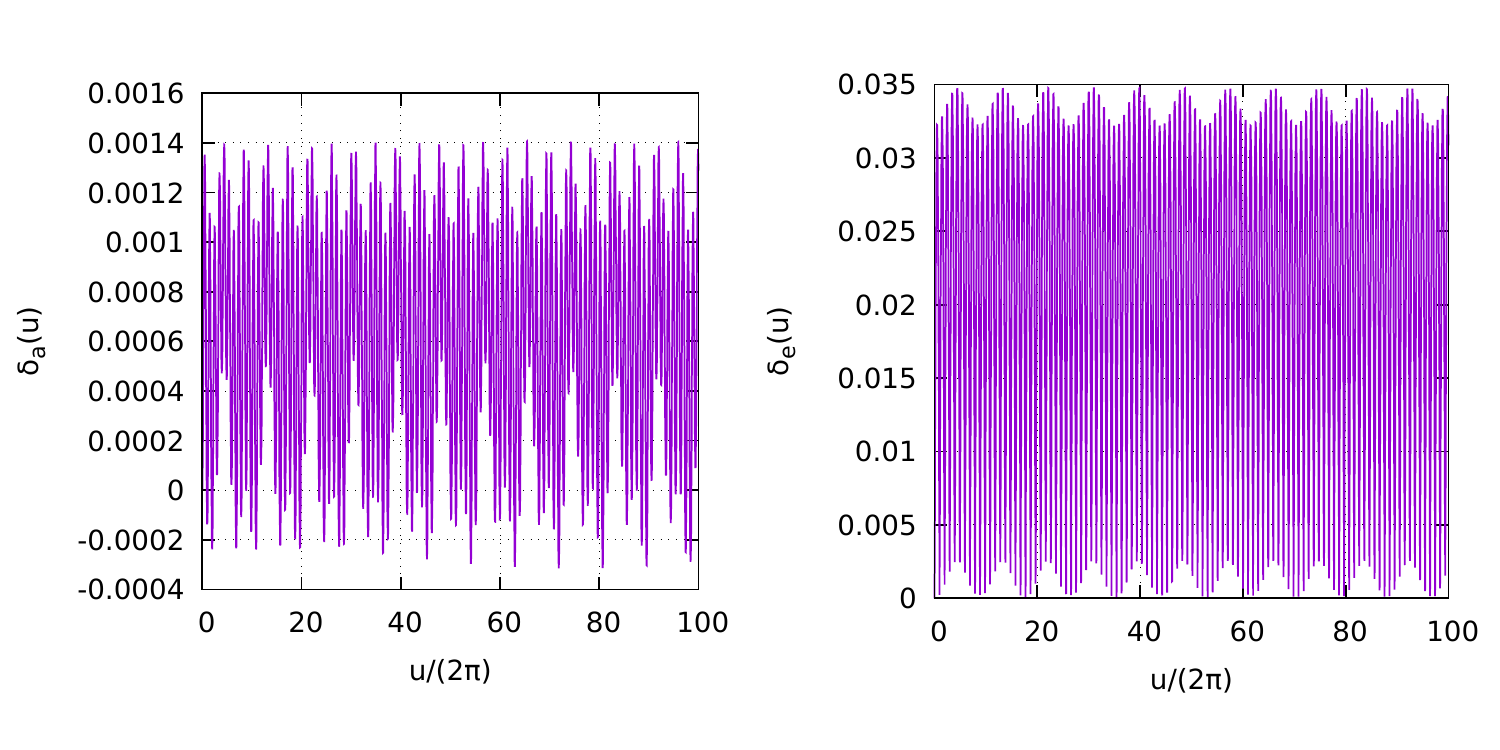}}
    \scalebox{1}{\includegraphics[scale=.45]{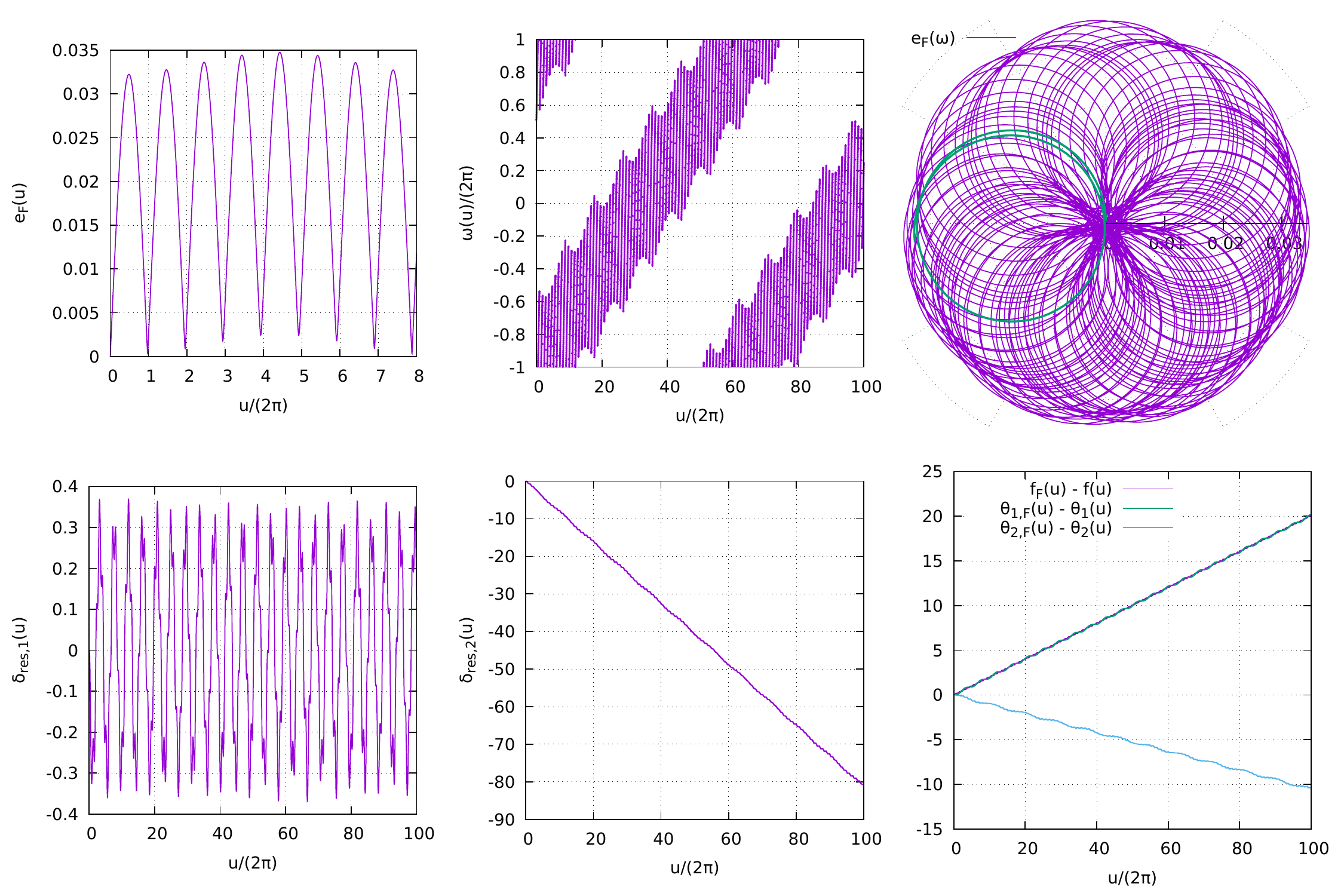}}
    \scalebox{1}{\includegraphics[scale=.45]{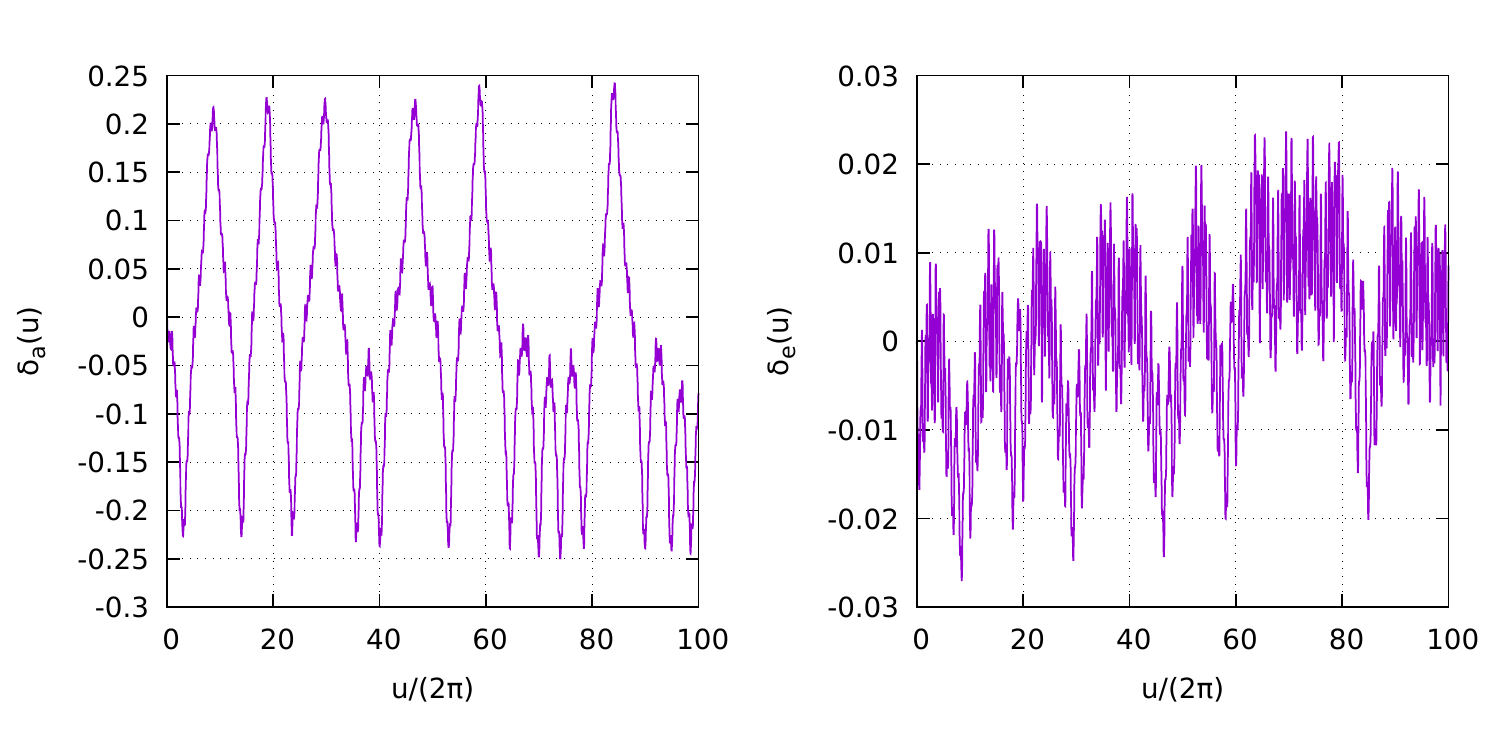}}
    \scalebox{1}{\includegraphics[scale=.45]{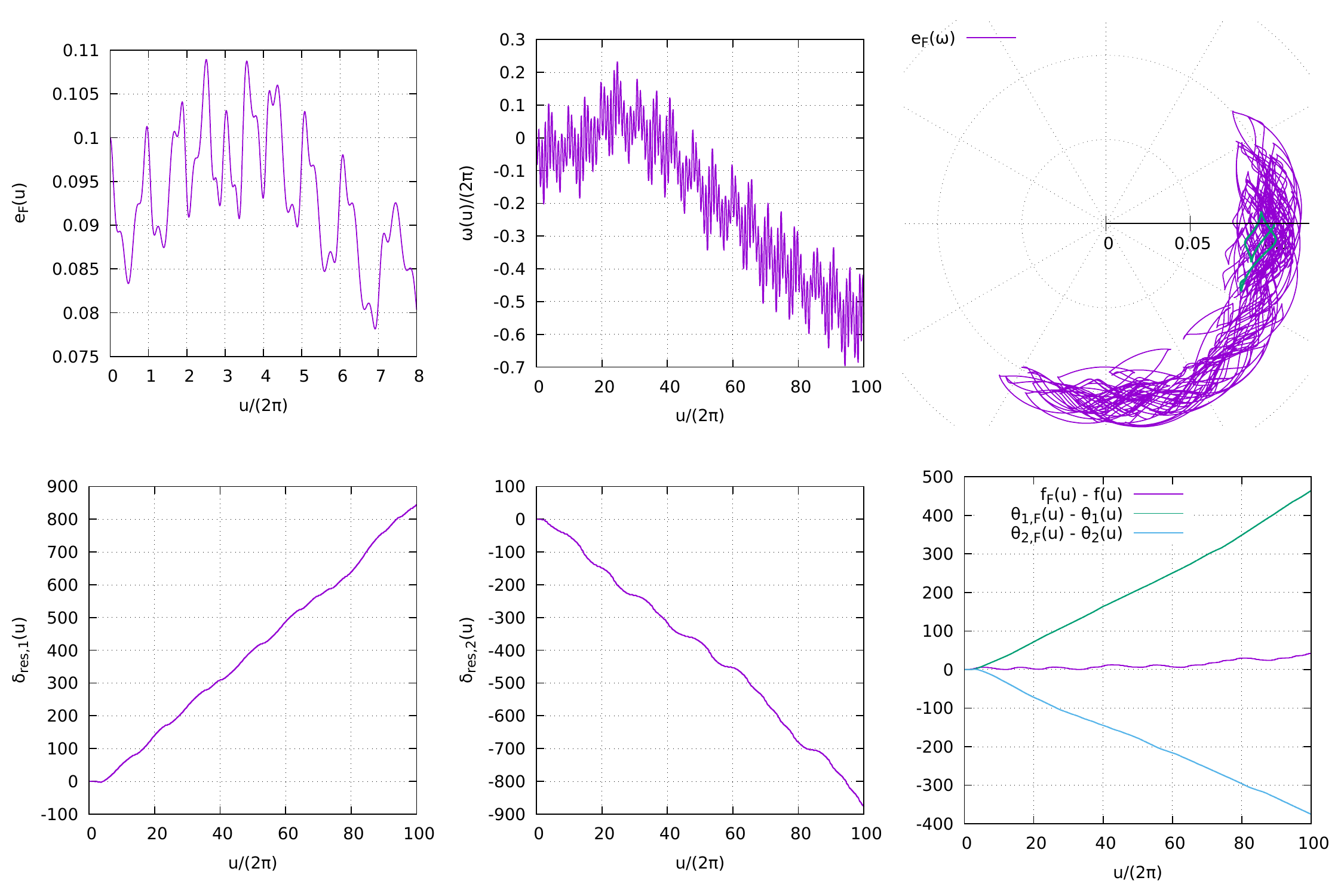}}
    \caption{Comparison between the full and the Keplerian solutions
      in a resonance $(1:1,3:2)$ of type $(0,0)$ for different
      parameter values in the case of equal bodies. Top plots: $e=0$,
      $\lambda = 0.05$, $\hat q = 0.01$, $\sigma = 10^{-3}$. Bottom
      plots: $e=0.1$, $\lambda = 0.05$, $\hat q = 0$, $\sigma =
      10^{-7}$.}
    \label{fig:oscillating1}
\end{figure}

%\newpage

In \Cref{fig:oscillating1} we see, for different values of the
parameters and the case of identical bodies, the comparison between
$z(t)$ and $\zeta(t)$ in a resonance $(1:1,3:2)$ of type
$(0,0)$. Particularly, we see the evolution of the $\delta$-functions
in \cref{deltas} and some Kepler elements of $z(t)$ in 100
revolutions. In addition, \Cref{tab:oscillating1} informs us about the
corresponding Floquet multipliers of both $z(t)$ and $\zeta(t)$. We
summarize the following observations:

\begin{enumerate}
  \item We chose two cases for the comparison. In the first case,
    $e=0$ and $\sigma=10^{-3}$, so the bodies are relatively close to
    each other ($a\approx 39$) in circular orbits, whereas in the
    second case, $e=0.1$ and $\sigma=10^{-7}$, so the bodies are much
    further ($a\approx 3900$) in moderately elliptic orbits. For both
    cases, we took values of parameters whose corresponding resonance
    is not hyperbolic unstable, say, $\lambda=0.05$ and $\hat q=0.01$.

  \item From \Cref{fig:oscillating1}, we see that the first case has a
    more regular behavior than the second one. In the first case, the
    orbit of the full model oscillates regularly very close to the
    circular orbit of the Keplerian model, with a precession that
    increases uniformly in average. Actually, in a shorter time scale,
    the eccentricity vector describes a circles passing through the
    origin. On the other hand, the orbit of the second case is greatly
    irregular, with variations of size up to $\sim 25\% $. Even its
    precession changes direction at some moment and the eccentricity
    vector swings chaotically in short time scales.

  \item The variation of the angles is quite regular in both
    cases. Except for the resonant angle of the resonance 1:1 of the
    first case, which oscillates regularly with small amplitude, the
    rest of the resonant angles increase/decrease more or less
    constantly, with larger variations in the second case.

  \item The Floquet multipliers in \Cref{tab:oscillating1} gives us
    more information about both cases. We remark that we display the
    multipliers of $\zeta(t)$ corresponding to spin and orbit
    separately, say, the first two rows are the four multipliers of
    the spin-spin model \cref{spin_spin_j}, whereas the third one
    shows the four coincident multipliers of the Kepler problem. We
    confirm that the spin of $\zeta(t)$ is elliptic stable in the two
    cases. On the other hand, the multipliers of $z(t)$ are computed
    with \cref{full_Ham_sys}. We find out that, although the first
    case is more regular, the solution is actually hyperbolic
    unstable, whereas the opposite occurs for the second case: it is
    irregular but not hyperbolic. This is consistent with
    \Cref{fig:Ham_comp}.
\end{enumerate}

\begin{table}[ht]
    \[
    \begin{array}{|l|l|c|l|c|} \hline
        &\multicolumn{2}{c|}{e=0, \lambda = 0.05, \hat q = 0.01, \sigma = 10^{-3}}&  \multicolumn{2}{c|}{e=0.1, \lambda = 0.05, \hat q = 0.01, \sigma = 10^{-7}} \\ \hline
        &\text{Argument}  & \text{Modulus} & \centering \text{Argument} & \text{Modulus}\\ \hline \hline
        \multirow{ 3}{*}{$\zeta(t)$} &\pm 1.42 & 1 & \pm 1.39 & 1 \\
        &\pm7.36 \cdot 10^{-8} & 1 & \pm 8.34 \cdot 10^{-1} & 1 \\
        & 0 & 1 \text{   (quadruple)} &  0 & 1 \text{   (quadruple)}\\  \hline
        \multirow{ 4}{*}{$z(t)$} &\pm 1.41 & 1 & \pm 1.39 & 1\\
        & \pm 7.64\cdot 10^{-9} & 1 & \pm 8.34 \cdot 10^{-1} & 1\\
        &\pm 1.65 \cdot 10^{-1} & 1.09 & \pm 3.77\cdot 10^{-4} & 1\\
        &\pm 1.65 \cdot 10^{-1} & 0.915 &  0 & 1 \text{   (double)} \\ \hline
    \end{array}
    \]
    \caption{Modulus and argument of the Floquet multipliers of
          the functions $z(t)$ and $\zeta(t)$ compared in
          Figure~\ref{fig:oscillating1}.}
    \label{tab:oscillating1}
\end{table}

We conclude that, even when there is hyperbolic instability, the
solution of full system remains quite close to the solution of the
Keplerian system with circular orbit. However, even with no hyperbolic
instability, for $e\ne 0$, the behavior of both solutions is
remarkably different. This fact should be attenuated with very small
eccentricities.

We can make a step further in the comparison when
$e=0$. \Cref{fig:comp_circular} shows a quantitative comparison in the
orbits of the full and Keplerian models. We vary the parameters
$\sigma $ and $\lambda$ keeping $\hat q=0$ constant. For large enough
$\sigma$ (small $a$), there is a region of parameters leading to
collision. The rest of the diagrams show different orders of magnitude
of $\delta$-functions corresponding to $a$ and $e$. We check that, for
very small $\lambda$, the orbit of the full model is very close to the
Keplerian one.

\begin{figure}[ht]
    \centering
    \includegraphics[scale=.65]{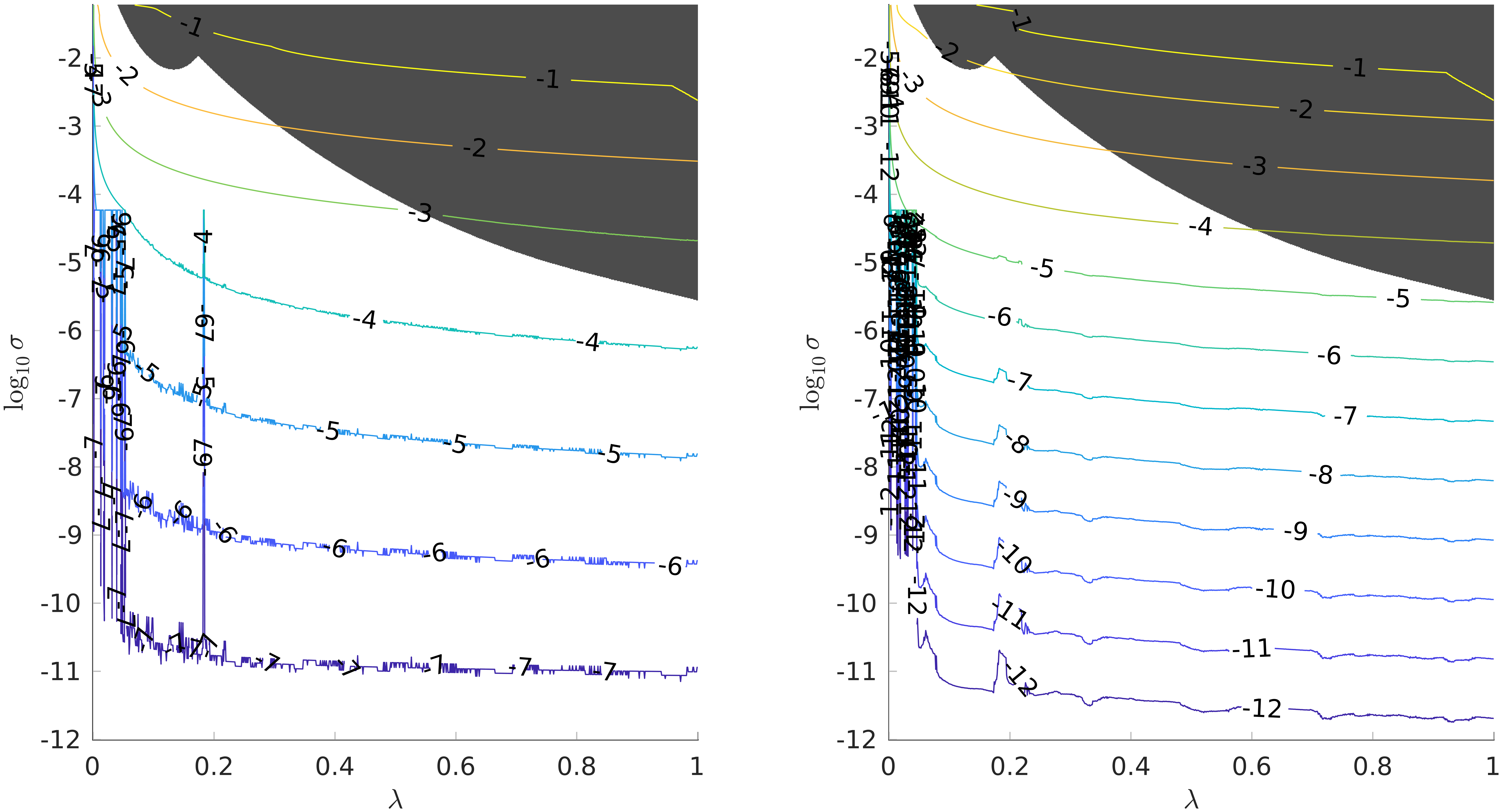}
    \caption{Contour plots of $\max \{ \log _{10}|\delta _e(u)| \colon
      u \in [0,4\pi]\}$ on the left and $\max \{\log _{10}|\delta
      _a(u)| \colon u \in [0,4\pi]\}$ on the right for the resonance
      $(1:1,3:2)$ of type $(0,0)$ and parameter values $\ecc = 0$ and
      $\hat q = 0$ for equal bodies. The black region denotes the
      collision during the integration of the full spin-spin model,
      i.e. when $r_F(u) \leq \mathsf{a}_1 + \mathsf{a} _2$ with
      $\mathsf{a} _j$ defined in \eqref{a_j}. Using 80 CPUs the plots
      required around 1 hour with a mesh of $1024^2$ and $2048$
      stopping points in $[0,4\pi]$.}
    % wtime(80) = 3790.7 sec
    \label{fig:comp_circular}
\end{figure}

\section{Conclusions}

This research is, primarily, a careful numerical study of the
spin-spin model, presented as such in \cite{mis2021}. For this
purpose, we have considered several parallel models describing the
planar gravitational dynamics of two ellipsoids under the
\Cref{a:keplerian,a:planar_motion,a:planar_symmetry}. We distinguish
between full and Keplerian models, and also, between spin-orbit and
spin-spin models. The known dynamics of the Keplerian spin-orbit
problem was used as reference to develop our results.

In first place, we realize that the symmetries of the ellipsoids
lead to certain symmetries in the equations of the Keplerian
models, and so, to symmetric periodic solutions that structure the
overall dynamics. We complete this general view by providing
conditions for existence of quasi-periodic solutions. Between the
periodic solutions, we focus on a special class that comprises the
simplest solutions from the physical interpretation, the balanced
spin-orbit and spin-spin resonances. We study the high-dimensional
phase space of the spin-spin problem by means of projections of
Poincar\'e maps, taking those of the spin-orbit problem as
reference.

First, we see that when one body is spherical, the dynamics of the
spin-orbit problem is reproduced with small variations. For two
ellipsoids, the projections of the Poincar\'e maps show structures
similar to those of the spin-orbit ones. There is a particular
behavior when both bodies are identical, the so-called measure
synchronization.

After that, we study existence, multiplicity and linear stability of
the solutions in balanced resonances as we vary the parameters of the
problem. We focus on the qualitative changes in the stability diagrams
produced by the variation of each parameter. We use the case of
identical bodies to illustrate our observations most of the time.

In the last part of the paper, we compare the solutions of the
full and the Keplerian models by means of rewriting the equations
in a Hamiltonian-like form. We produce stability diagrams similar
to the previous ones to show such a comparison. Representing
different aspects with similar diagrams allows one a much more
global understanding of a problem with so many variables and
parameters. We end up by comparing the evolution of particular
solutions for certain parameters. The orbit of the full problem is
characterized by Kepler's elements to facilitate the comparison.
From the parameters, we observe that the comparison only makes
sense for the spin-spin models. Here we find out that, beside the
case of circular orbits, for which full and Keplerian models show
a close behavior, for eccentric orbits, the trajectories are very
dissimilar. We finish the comparison in the case of circular
orbits by showing that the space of parameters is divided into
regions leading to collision or regions with orbits differing
specific orders of magnitude.

The topics and results presented in this work are just a taster of
some features of the rich spin-spin dynamics. We also include a
comparison with the full models, which is not widely studied in
the literature; however, this is crucial, because it shows us the
limits of validity of the models for applications. We believe that
here is still a lot to explore in this model, which can be used to
investigate problems like Arnold's diffusion or the existence of
normally hyperbolic manifolds.

\appendix

\section{Coefficients of the expansion of the potential}
\label{app:coef}

The full expansion of the potential energy of the Full Two-Body
Problem was derived in \cite{bou2017}. Later, \cite{mis2021} detailed
that expression to the case of planar motion of two ellipsoids, which
is given by
\begin{multline*}%\label{V_full}
    V=-\frac{G M_1 M_2 }{ r}
    \displaystyle\sum_{\substack{(l_1,m_1)\in \Upsilon \\ (l_2,m_2)\in \Upsilon}}    \Gamma_{l_2,m_2}^{l_1,m_1}
    \parentesis{\frac{R_1}{r}}^{2l_1} \parentesis{\frac{R_2}{r}}^{2l_2} \times \\ \times
    \mathcal Z^{1)}_{2l_1,2m_1} \mathcal Z^{2)}_{2l_2,2m_2}
    \cos (2m_1 (\theta_2-f)+2m_2 (\theta_2-f))\ ,
\end{multline*}
where
\begin{equation*}
    \Upsilon= \{ (l,m)\in \mathbb Z^2 \colon \ 0\le |m| \le l    \}\ ,
\end{equation*}
and, if we take $L=l_1+l_2$ and $M=m_1+m_2$, the constants
\begin{equation*}
  \Gamma_{l_2,m_2}^{l_1,m_1} =
  \frac{(-1)^{L-M}}{4^{L}\sqrt{(2l_1-2m_1)!(2l_1+2m_1)!(2l_2-2m_2)!(2l_2+2m_2)!}}
  \frac{(2L-2M)!(2L+2M)!}{(L-M)!(L+M)!}
\end{equation*}
are numbers dealing with the interaction between the extended
bodies. On the other hand, $R_j$ and $\mathcal Z^{j)}_{2l_j,2m_j}$
are, respectively, the mean radius and the Stokes coefficients of each
$\mathcal E_j$. The quantities $\mathcal Z^{j)}_{l,m}$ provide the
expansion of the potential created for the body $\mathcal E_j$. They
are related to the usual parameters $C^{j)}_{l,m}$ and $S^{j)}_{l,m}$
by $$ C^{j)}_{l,m}+ i S^{j)}_{l,m} = (-1)^m \frac{2}{1+\delta_{m,0}}
\sqrt{\frac{(l-m)!}{(l+m)!}} \bar {\mathcal Z}^{j)}_{l,m}, \quad m\ge
0, $$ where $\delta_{m,n}$ is the Kronecker delta.

\section{A different formulation of the equations of
motion}
\label{app:diff}

To perform the integration of \equ{spin-spin_V_trun}, it is convenient
to adopt the eccentric anomaly $u$ as independent variable, according to
the following procedure.

Let us write the equations \equ{spin-spin_V_trun} as
\begin{equation}
 \label{system}
C_j\frac{d^2 \theta _j}{dt^2}(t)  + \left(\frac{a}{ r}\right)^{5} \eps _j F
_j(t,\theta)=0, \qquad j = 1,2\ ,
\end{equation}
where $\theta = (\theta _1, \theta _2)$, $C _1 + C _2 = 1$ and
$F_j=\partial_{\theta_j}(V_2+V_4)$, $j=1,2$. The explicit expression
for $F_1$ and $F_2$ is given by
\begin{eqnarray}\label{F12}
F_1(t,\theta)&=& \biggl( \left(\frac{r}{a}\right)^2 +
\frac{5}{4} \bigl(\hat q_2+ \frac{5}{7} \hat q_1 \bigr) \biggr)\sin(2 \theta_1-2f)\nonumber\\
&+&\frac{25 \hat d_1}{8}  \sin(4 \theta_1-4f) + \frac{3 \hat
d_2}{8}  \sin(2 \theta_{1}-2\theta_{2})
+ \frac{35 \hat d_2}{8}  \sin(2 \theta_2+2\theta_1-4f)\nonumber\\
F_2(t,\theta)&=&  \biggl(\left(\frac{r}{a}\right)^2 + \frac{5}{4} \bigl(\hat
q_1+ \frac{5}{7} \hat q_2\bigr) \biggr) \sin(2 \theta_2-2f)
+\frac{25 \hat d_2}{8}  \sin(4 \theta_2-4f)\nonumber\\
&-& \frac{3 \hat d_1}{8}  \sin(2 \theta_{1}-2\theta_{2}) +
\frac{35 \hat d_1}{8}  \sin(2 \theta_2+2\theta_1-4f)\ .
\end{eqnarray}
Let us consider the change of variables given by the Kepler's equation
\[
 x _j(u) = \theta _j (u -\ecc \sin u)\ , \qquad j = 1,2\ .
\]
Then, we have
\begin{align*}
 \frac{d^2\theta _j}{dt^2}(t) = \left(\frac{a}{r(u)}\right)^2\frac{d^2 x _j}{d u^2}(u) - \left(\frac{a}{r(u)}\right)^3 \frac{d x _j}{d u}(u) \ecc \sin u, \qquad j = 1,2\ ,
\end{align*}
so that, assuming $C _j\not=0$, \eqref{system} becomes
\begin{equation}
 \label{system2}
 \frac{d^2 x _j}{d u^2}(u) -  \frac{a}{r(u)} \frac{d x _j}{d u}(u) \ecc \sin u  + \left(\frac{a}{ r(u)} \right)^3\frac{\eps _j}{C _j}  F _j(u,x)=0\ , \qquad j = 1,2\ .
\end{equation}
We can write the system \eqref{system2} as
\begin{eqnarray*}
  \frac{d x _j}{d u}(u) &=& y _j(u) \nonumber\\
  \frac{d y _j}{d u}(u) &=& \frac{a}{r(u)} y _j(u) \ecc \sin u  - \left(\frac{a}{ r(u)} \right)^3\frac{\eps _j}{C _j}  F _j(u,x) \ ,
\end{eqnarray*}
for $j = 1,2$ and $x = (x _1, x _2)$. From the well-known
relations used in the study of Kepler's problem
\[
 \cos f = \frac{\cos u - \ecc}{1 - \ecc \cos u}\ , \qquad \sin f =
 \frac{\sqrt{1 - \ecc^2} \sin u}{1 - \ecc \cos u}\ ,
\]
we can define the functions $s=s(x_j)$, $c=c(x_j)$ as
\begin{equation*}
 \begin{split}
  s(x _j) &= \sin (2x _j) (2 \cos^2 f - 1) - \cos (2x _j) 2\cos f \sin f \\
  c(x _j) &= \cos (2x _j) (2 \cos^2 f - 1) + \sin (2x _j) 2\cos f \sin f\ ,
 \end{split}
\end{equation*}
so that $F_1$ and $F_2$ in \equ{F12} take the form
\begin{eqnarray*}
F_1(u,x)&=& \biggl(\left(\frac{r(u)}{a}\right)^2 +
\frac{5}{4} \bigl(\hat q_2+ \frac{5}{7} \hat q_1 \bigr) \biggr) s(x _1)\nonumber\\
&+&\frac{25 \hat d_1}{4}  s(x _1)c(x _1) + \frac{3 \hat d_2}{8}
\sin(2 x_{1}-2 x_{2})
+ \frac{35 \hat d_2}{4}  s(x _1 + x _2)c(x _1 + x _2)\nonumber\\
F_2(u,x)&=&  \biggl(\left(\frac{r(u)}{a}\right)^2 + \frac{5}{4} \bigl(\hat q_1+
\frac{5}{7} \hat q_2\bigr) \biggr) s(x _2)
+\frac{25 \hat d_2}{4}  s(x _1)c(x _1)\nonumber\\
&-& \frac{3 \hat d_1}{8}  \sin(2 x _{1}-2 x_{2}) + \frac{35 \hat
d_1}{4}   s(x _1 + x _2)c(x _1 + x _2)\ .
\end{eqnarray*}

\section{Expansion of $V_2$ and $V_4$}
\label{app:expansion}

We give below the expansion of $V_2$ and $V_4$ up to second order in
the eccentricity:
\begin{eqnarray*}
V_2&=&-\frac{3 d_1 e^2 G M_2 \cos (2 \theta_1)}{8 a^3}-\frac{3 d_2 e^2 G M_1
   \cos (2 \theta_2)}{8 a^3}-\frac{27 d_1 e^2 G M_2 \cos (2 t-2
   \theta_1)}{8 a^3}\nonumber\\
   &-&\frac{3 d_1 e^2 G M_2 \cos (4 t-2 \theta_1)}{4
   a^3}-\frac{27 d_2 e^2 G M_1 \cos (2 t-2 \theta_2)}{8 a^3}-\frac{3 d_2
   e^2 G M_1 \cos (4 t-2 \theta_2)}{4 a^3}\nonumber\\
   &-&\frac{9 d_1 e G M_2 \cos (t-2
   \theta_1)}{8 a^3}-\frac{9 d_1 e G M_2 \cos (3 t-2 \theta_1)}{8
   a^3}-\frac{9 d_2 e G M_1 \cos (t-2 \theta_2)}{8 a^3}\nonumber\\
   &-&\frac{9 d_2 e G
   M_1 \cos (3 t-2 \theta_2)}{8 a^3}-\frac{3 d_1 G M_2 \cos (2 t-2
   \theta_1)}{4 a^3}-\frac{3 d_2 G M_1 \cos (2 t-2 \theta_2)}{4
   a^3}\nonumber\\
   &-&\frac{3 e^2 G M_2 q_1 \cos (2 t)}{8 a^3}-\frac{3 e^2 G M_1 q_2 \cos (2
   t)}{8 a^3}-\frac{9 e^2 G M_2 q_1}{8 a^3}-\frac{9 e^2 G M_1 q_2}{8 a^3}-\frac{3 e
   G M_2 q_1 \cos (t)}{4 a^3}\nonumber\\
   &-&\frac{3 e G M_1 q_2 \cos (t)}{4 a^3}-\frac{G M_2
   q_1}{4 a^3}-\frac{G M_1 q_2}{4 a^3}\ ,\nonumber
\end{eqnarray*}
\begin{eqnarray*}
V_4&=&-\frac{225 G M_2 q_1^2 e^2}{112 a^5 M_1}-\frac{225 G \cos (2 t) M_2 q_1^2 e^2}{224
   a^5 M_1}-\frac{225 G M_1 q_2^2 e^2}{112 a^5 M_2}-\frac{225 G \cos (2 t) M_1
   q_2^2 e^2}{224 a^5 M_2}\nonumber\\
   &-&\frac{105 G \cos (2 t-2 \theta_1-2 \theta_2) d_1 d_2 e^2}{16 a^5}-\frac{525 G \cos (4 t-2 \theta_1-2
   \theta_2) d_1 d_2 e^2}{16 a^5}\nonumber\\
   &-&\frac{315 G \cos (6 t-2 \theta_1-2 \theta_2) d_1 d_2 e^2}{32 a^5}
   -\frac{45 G \cos (2 \theta_1-2 \theta_2) d_1 d_2 e^2}{16 a^5}\nonumber\\
   &-&\frac{45 G \cos (2 t+2 \theta_1-2 \theta_2) d_1 d_2 e^2}{64 a^5}-\frac{45 G \cos (2 t-2 \theta_1+2 \theta_2) d_1 d_2 e^2}{64 a^5}-\frac{225 G d_1^2 M_2 e^2}{224 a^5
   M_1}\nonumber\\
   &-&\frac{225 G \cos (2 t) d_1^2 M_2 e^2}{448 a^5 M_1}-\frac{75 G \cos (2 t-4
   \theta_1) d_1^2 M_2 e^2}{32 a^5 M_1}-\frac{375 G \cos (4 t-4
   \theta_1) d_1^2 M_2 e^2}{32 a^5 M_1}\nonumber\\
   &-&\frac{225 G \cos (6 t-4
   \theta_1) d_1^2 M_2 e^2}{64 a^5 M_1}-\frac{75 G \cos (2 t-2
   \theta_2) d_2 q_1 e^2}{8 a^5}-\frac{165 G \cos (4 t-2 \theta_2)
   d_2 q_1 e^2}{64 a^5}\nonumber\\
   &-&\frac{135 G \cos (2 \theta_2) d_2 q_1 e^2}{64
   a^5}-\frac{375 G \cos (2 t-2 \theta_1) d_1 M_2 q_1 e^2}{56 a^5
   M_1}-\frac{825 G \cos (4 t-2 \theta_1) d_1 M_2 q_1 e^2}{448 a^5
   M_1}\nonumber\\
   &-&\frac{675 G \cos (2 \theta_1) d_1 M_2 q_1 e^2}{448 a^5
   M_1}-\frac{75 G \cos (2 t-2 \theta_1) d_1 q_2 e^2}{8 a^5}-\frac{165 G
   \cos (4 t-2 \theta_1) d_1 q_2 e^2}{64 a^5}\nonumber\\
\end{eqnarray*}

\begin{eqnarray*}
   &-&\frac{135 G \cos (2
   \theta_1) d_1 q_2 e^2}{64 a^5}-\frac{45 G q_1 q_2 e^2}{8 a^5}-\frac{45
   G \cos (2 t) q_1 q_2 e^2}{16 a^5}-\frac{375 G \cos (2 t-2 \theta_2) d_2
   M_1 q_2 e^2}{56 a^5 M_2}\nonumber\\
   &-&\frac{825 G \cos (4 t-2 \theta_2) d_2 M_1
   q_2 e^2}{448 a^5 M_2}-\frac{675 G \cos (2 \theta_2) d_2 M_1 q_2
   e^2}{448 a^5 M_2}-\frac{225 G d_2^2 M_1 e^2}{224 a^5 M_2}-\frac{225 G \cos (2 t)
   d_2^2 M_1 e^2}{448 a^5 M_2}\nonumber\\
   &-&\frac{75 G \cos (2 t-4 \theta_2) d_2^2 M_1
   e^2}{32 a^5 M_2}-\frac{375 G \cos (4 t-4 \theta_2) d_2^2 M_1 e^2}{32 a^5
   M_2}-\frac{225 G \cos (6 t-4 \theta_2) d_2^2 M_1 e^2}{64 a^5
   M_2}\nonumber\\
   &-&\frac{225 G \cos (t) M_2 q_1^2 e}{224 a^5 M_1}-\frac{225 G \cos (t) M_1
   q_2^2 e}{224 a^5 M_2}-\frac{525 G \cos (3 t-2 \theta_1-2 \theta_2) d_1 d_2 e}{64 a^5}\nonumber\\
   &-&\frac{525 G \cos (5 t-2 \theta_1-2 \theta_2) d_1 d_2 e}{64 a^5}-\frac{45 G \cos (t+2 \theta_1-2 \theta_2) d_1 d_2 e}{64 a^5}\nonumber\\
   &-&\frac{45 G \cos (t-2 \theta_1+2 \theta_2) d_1 d_2 e}{64 a^5}-\frac{225 G \cos (t) d_1^2 M_2 e}{448 a^5
   M_1}\nonumber-\frac{375 G \cos (3 t-4 \theta_1) d_1^2 M_2 e}{128 a^5
   M_1}\nonumber\\
   &-&\frac{375 G \cos (5 t-4 \theta_1) d_1^2 M_2 e}{128 a^5
   M_1}-\frac{75 G \cos (t-2 \theta_2) d_2 q_1 e}{32 a^5}-\frac{75 G \cos
   (3 t-2 \theta_2) d_2 q_1 e}{32 a^5}\nonumber\\
   &-&\frac{375 G \cos (t-2 \theta_1) d_1 M_2 q_1 e}{224 a^5 M_1}-\frac{375 G \cos (3 t-2 \theta_1) d_1
   M_2 q_1 e}{224 a^5 M_1}-\frac{75 G \cos (t-2 \theta_1) d_1 q_2 e}{32
   a^5}\nonumber\\
   &-&\frac{75 G \cos (3 t-2 \theta_1) d_1 q_2 e}{32 a^5}-\frac{45 G \cos
   (t) q_1 q_2 e}{16 a^5}-\frac{375 G \cos (t-2 \theta_2) d_2 M_1 q_2
   e}{224 a^5 M_2}\nonumber\\
   &-&\frac{375 G \cos (3 t-2 \theta_2) d_2 M_1 q_2 e}{224
   a^5 M_2}-\frac{225 G \cos (t) d_2^2 M_1 e}{448 a^5 M_2}-\frac{375 G \cos (3 t-4
   \theta_2) d_2^2 M_1 e}{128 a^5 M_2}\nonumber\\
   &-&\frac{375 G \cos (5 t-4
   \theta_2) d_2^2 M_1 e}{128 a^5 M_2}-\frac{45 G M_2 q_1^2}{224 a^5
   M_1}-\frac{45 G M_1 q_2^2}{224 a^5 M_2}-\frac{105 G \cos (4 t-2 \theta_1-2 \theta_2) d_1 d_2}{32 a^5}\nonumber\\
   &-&\frac{9 G \cos (2 \theta_1-2
   \theta_2) d_1 d_2}{32 a^5}-\frac{45 G d_1^2 M_2}{448 a^5 M_1}-\frac{75
   G \cos (4 t-4 \theta_1) d_1^2 M_2}{64 a^5 M_1}\nonumber\\
   &-&\frac{15 G \cos (2 t-2
   \theta_2) d_2 q_1}{16 a^5}-\frac{75 G \cos (2 t-2 \theta_1) d_1
   M_2 q_1}{112 a^5 M_1}\nonumber\\
   &-&\frac{15 G \cos (2 t-2 \theta_1) d_1 q_2}{16
   a^5}-\frac{9 G q_1 q_2}{16 a^5}-\frac{75 G \cos (2 t-2 \theta_2) d_2
   M_1 q_2}{112 a^5 M_2}-\frac{45 G d_2^2 M_1}{448 a^5 M_2}\nonumber\\
   &-&\frac{75 G \cos (4t-4 \theta_2) d_2^2 M_1}{64 a^5 M_2}\ .
\end{eqnarray*}

\bibliographystyle{alpha}
\bibliography{references}

\begin{thebibliography}{CCGdlL21b}

\bibitem[{A}rn63]{Arnold63a}
V.~I. {A}rnol'd.
\newblock Proof of a theorem of {A}. {N}. {K}olmogorov on the invariance of
  quasi-periodic motions under small perturbations.
\newblock {\em Russian Math. Surveys}, 18(5):9--36, 1963.

\bibitem[Bel66]{bel1966}
V.~V. Beletskii.
\newblock {\em Motion of an artificial satellite about its center of mass}.
\newblock Mechanics of Space Flight. Israel Program for Scientific
  Translations; [available from the U.S. Dept. of Commerce, Clearinghouse for
  Federal Scientific and Technical Information, Springfield, Va.], Jerusalem,
  1966.

\bibitem[BL75]{bel1975}
V.~V. {Beletskii} and E.~K. {Lavrovskii}.
\newblock {On the theory of the resonance rotation of Mercury}.
\newblock {\em Astronomicheskii Zhurnal}, 52:1299--1308, December 1975.

\bibitem[BL09]{boulas2009}
Gwenaël Boué and Jacques Laskar.
\newblock Spin axis evolution of two interacting bodies.
\newblock {\em Icarus}, 201(2):750 -- 767, 2009.

\bibitem[BM15]{batmor2015}
Konstantin Batygin and Alessandro Morbidelli.
\newblock {Spin-Spin coupling in the Solar System}.
\newblock {\em The Astrophysical Journal}, 810(2):110, sep 2015.

\bibitem[BO16]{bosort2016}
Alberto Boscaggin and Rafael Ortega.
\newblock Periodic solutions of a perturbed kepler problem in the plane: From
  existence to stability.
\newblock {\em Journal of Differential Equations}, 261(4):2528--2551, 2016.

\bibitem[Bou17]{bou2017}
Gwena{\"e}l Bou{\'e}.
\newblock The two rigid body interaction using angular momentum theory
  formulae.
\newblock {\em Celestial Mechanics and Dynamical Astronomy}, 128(2):261--273,
  Jun 2017.

\bibitem[CC00]{celchi2000}
Alessandra Celletti and Luigi Chierchia.
\newblock Hamiltonian stability of spin–orbit resonances in celestial
  mechanics.
\newblock {\em Celestial Mechanics and Dynamical Astronomy}, 76:229--240, 05
  2000.

\bibitem[CC08]{cel2008}
A.~{Celletti} and L.~{Chierchia}.
\newblock {Measures of basins of attraction in spin-orbit dynamics}.
\newblock {\em Celestial Mechanics and Dynamical Astronomy}, 101:159--170, May
  2008.

\bibitem[CC09]{cel2009}
A.~{Celletti} and L.~{Chierchia}.
\newblock {Quasi-Periodic Attractors in Celestial Mechanics}.
\newblock {\em Archive for Rational Mechanics and Analysis}, 191:311--345,
  February 2009.

\bibitem[CCGdlL21a]{CCGL20c}
Renato~C. Calleja, Alessandra Celletti, Joan Gimeno, and Rafael de~la Llave.
\newblock Break-down threshold of invariant attractors in the dissipative
  spin-orbit problem.
\newblock {\em Preprint}, 2021.

\bibitem[CCGdlL21b]{CCGL20a}
Renato~C. Calleja, Alessandra Celletti, Joan Gimeno, and Rafael de~la Llave.
\newblock Efficient and accurate {K}{A}{M} tori construction for the
  dissipative spin-orbit problem using a map reduction.
\newblock {\em Preprint}, 2021.

\bibitem[CCGdlL21c]{CCGL20b}
Renato~C. Calleja, Alessandra Celletti, Joan Gimeno, and Rafael de~la Llave.
\newblock {K}{A}{M} quasi-periodic tori for the dissipative spin-orbit problem.
\newblock {\em Preprint}, 2021.

\bibitem[{Cel}90]{cel1990}
A.~{Celletti}.
\newblock {Analysis of resonances in the spin-orbit problem in celestial
  mechanics: The synchronous resonance (Part I).}
\newblock {\em Zeitschrift Angewandte Mathematik und Physik}, 41:174--204,
  March 1990.

\bibitem[Cel10]{cel2010}
Alessandra Celletti.
\newblock {\em Stability and Chaos in Celestial Mechanics}.
\newblock Springer, Berlin, Heidelberg, 01 2010.

\bibitem[CFL04]{CFL}
A.~Celletti, C.~Falcolini, and U.~Locatelli.
\newblock On the break-down threshold of invariant tori in four dimensional
  maps.
\newblock {\em Regul. Chaotic Dyn.}, 9(3):227--253, 2004.

\bibitem[DeV58]{dev1958}
Ren\'e DeVogelaere.
\newblock {\em On the Structure of Symmetric Periodic Solutions of Conservative
  Systems, with Applications}, pages 53--84.
\newblock Princeton University Press, 1958.

\bibitem[Eke90]{eke}
I.~Ekeland.
\newblock {\em Convexity Methods in Hamiltonian Mechanics}, volume~19 of {\em
  Ergebnisse der Mathematik und ihrer Grenzgebiete : a series of modern surveys
  in mathematics. Folge 3}.
\newblock Springer-Verlag, Berlin Heidelberg, 1990.

\bibitem[Gol80]{gol1980}
Herbert Goldstein.
\newblock {\em Classical Mechanics}.
\newblock Addison-Wesley, 1980.

\bibitem[GP66]{gol1966}
Peter Goldreich and Stanton Peale.
\newblock {Spin orbit coupling in the Solar System}.
\newblock {\em The Astronomical Journal}, 71:425, 07 1966.

\bibitem[Gre79]{gre1979}
John~M. Greene.
\newblock {A method for determining a stochastic transition}.
\newblock {\em J. Math. Phys.}, 20(6):1183--1201, 1979.

\bibitem[HX17]{hou2017}
Xiyun Hou and Xiaosheng Xin.
\newblock A note on the spin{\textendash}orbit, spin{\textendash}spin, and
  spin{\textendash}orbit{\textendash}spin resonances in the binary minor planet
  system.
\newblock {\em The Astronomical Journal}, 154(6):257, nov 2017.

\bibitem[HZ99]{ham1999}
Alan Hampton and Dami\'an~H. Zanette.
\newblock Measure synchronization in coupled hamiltonian systems.
\newblock {\em Phys. Rev. Lett.}, 83:2179--2182, Sep 1999.

\bibitem[JA16]{jaf2016}
Mahdi {Jafari Nadoushan} and Nima Assadian.
\newblock Geography of the rotational resonances and their stability in the
  ellipsoidal full two body problem.
\newblock {\em Icarus}, 265:175 -- 186, 2016.

\bibitem[JNA16]{jaf2016b}
Mahdi Jafari~Nadoushan and Nima Assadian.
\newblock Chirikov diffusion in the sphere-ellipsoid binary asteroids.
\newblock {\em Nonlinear Dynamics}, 85, 08 2016.

\bibitem[JZ05]{JorbaZ2005}
{\`A}.~Jorba and M.~Zou.
\newblock A software package for the numerical integration of {ODE}s by means
  of high-order {T}aylor methods.
\newblock {\em Experiment. Math.}, 14(1):99--117, 2005.

\bibitem[{Kin}72]{Kinoshita}
H.~{Kinoshita}.
\newblock First-order perturbations of the two finite body problem.
\newblock {\em Publ. Astron. Soc. Japan}, 24:423, January 1972.

\bibitem[Kol54]{kolmogorov1954conservation}
Andrej~N. Kolmogorov.
\newblock On the conservation of conditionally periodic motions under small
  perturbation of the hamiltonian.
\newblock In {\em Dokl. Akad. Nauk. SSR}, volume~98, pages 2--3, 1954.

\bibitem[Kre50]{Krein}
M.G. Krein.
\newblock {Generalization of certain investigations of A.M. Lyapunov on linear
  differential equations with periodic coefficients}.
\newblock {\em Dokl. Akad. Nauk USSR}, 73:445--448, 1950.

\bibitem[LR98]{lamb1998}
Jeroen~S.W. Lamb and John~A.G. Roberts.
\newblock Time-reversal symmetry in dynamical systems: A survey.
\newblock {\em Physica D: Nonlinear Phenomena}, 112(1):1--39, 1998.
\newblock Proceedings of the Workshop on Time-Reversal Symmetry in Dynamical
  Systems.

\bibitem[Mac95]{mac1995}
Andrzej~J. Maciejewski.
\newblock Reduction, relative equilibria and potential in the two rigid bodies
  problem.
\newblock {\em Celestial Mechanics and Dynamical Astronomy}, 63(1):1--28, Mar
  1995.

\bibitem[Mis21]{mis2021}
Mauricio Misquero.
\newblock The spin-spin model and the capture into the double synchronous
  resonance.
\newblock {\em Nonlinearity}, {34}:{2191--2219}, 2021.

\bibitem[MO20]{misort2020}
Mauricio Misquero and Rafael Ortega.
\newblock Some rigorous results on the 1:1 resonance of the spin-orbit problem.
\newblock {\em SIAM J. Applied Dynamical Systems}, 19(4):2233--2267, 2020.

\bibitem[Mos62]{Moser62}
J.~Moser.
\newblock On invariant curves of area-preserving mappings of an annulus.
\newblock {\em Nachr. Akad. Wiss. G\"ottingen Math.-Phys. Kl. II}, 1962:1--20,
  1962.

\bibitem[MW79]{mag}
W.~Magnus and S.~Winkler.
\newblock {\em Hill's Equation}.
\newblock Dover, New York, 1979.

\bibitem[Sch72]{sch1972}
H.R. Schwarz.
\newblock Stability of {K}epler motion.
\newblock {\em Computer Methods in Applied Mechanics and Engineering},
  1(3):279--299, 1972.

\bibitem[Sch02]{sch2002}
D.~J. Scheeres.
\newblock {Stability in the Full Two-Body Problem}.
\newblock {\em Celestial Mechanics and Dynamical Astronomy}, 83(1):155--169,
  2002.

\bibitem[Sch09]{sch2009}
D.~J. Scheeres.
\newblock Stability of the planar full 2-body problem.
\newblock {\em Celestial Mechanics and Dynamical Astronomy}, 104(1):103--128,
  Jun 2009.

\bibitem[Ver78]{Verner78}
J.H. Verner.
\newblock Explicit {R}unge-{K}utta methods with estimates of the local
  truncation error.
\newblock {\em SIAM J. Numer. Anal.}, 15(4):772--790, 1978.

\bibitem[YS75]{yak1975}
V.~A. Yakubovich and V.~M. Starzhinskii.
\newblock {\em Linear Differential Equations with Periodic Coefficients}.
\newblock Wiley, New York, 1975.

\bibitem[ZOST66]{zla}
V.A. Zlatoustov, D.E. Ohotzimsky, V.A. Sarychev, and A.P. Torzhevsky.
\newblock {Investigation of a satellite oscillations in the plane of an
  elliptic orbit}.
\newblock In Görtler H., editor, {\em {Applied Mechanics. Proceedings of the
  Eleventh International Congress of Applied Mechanics Munich (Germany) 1964}},
  pages 436--439. Springer, Berlin, Heidelberg, 1966.

\end{thebibliography}

\end{document}